\documentclass[10pt]{amsart}

\usepackage{hyperref}
\usepackage{enumerate}
\usepackage{comment}

\makeatletter

\@namedef{subjclassname@2010}{

  \textup{2010} Mathematics Subject Classification}

\usepackage{amssymb, amsmath,amsthm}
\usepackage{mathrsfs}
\newtheorem{thm}{Theorem}[section]

\newtheorem{prop}[thm]{Proposition}

\newtheorem{cor}[thm]{Corollary}

\newtheorem{lem}[thm]{Lemma}
\theoremstyle{definition}
\newtheorem{rem}{Remark}

\newcommand{\ra}{\rightarrow}

\newcommand{\mc}{\mathcal}

\newcommand{\mb}{\mathbb}
\newcommand{\sg}{\sigma}

\newcommand{\Z}{\mb{Z}}
\renewcommand{\ss}{\substack}

\newcommand{\e}{\varepsilon}

\renewcommand{\bar}{\overline}

\frenchspacing

\textwidth=15.5cm

\textheight=23cm

\parindent=16pt

\oddsidemargin=0cm

\evensidemargin=0cm

\topmargin=-0.5cm

\begin{document}
\title{Large sums of high order characters II}
\author{Alexander P. Mangerel and Yichen You}
\address{Department of Mathematical Sciences, Durham University, Stockton Road, Durham, DH1 3LE, UK}
\email{smangerel@gmail.com \\ yichen.you@outlook.com}
\begin{abstract}
Let $\chi$ be a primitive character modulo $q$, and let $\delta > 0$. Assuming that $\chi$ has large order $d$, for any $d$th root of unity $\alpha$ we obtain non-trivial upper bounds for the number of $n \leq x$ such that $\chi(n) = \alpha$, provided $x > q^{\delta}$. This improves upon a previous result of the first author by removing restrictions on $q$ and $d$. As a corollary, we deduce that if the largest prime factor of $d$ satisfies $P^+(d) \to \infty$ then the level set $\chi(n) = \alpha$ has $o(x)$ such solutions whenever $x > q^{\delta}$, for any fixed $\delta > 0$. 

  Our proof relies, among other things, on a refinement of a mean-squared estimate for short sums of the characters $\chi^\ell$, averaged over $1 \leq \ell \leq d-1$, due to the first author, which goes beyond Burgess' theorem as soon as $d$ is sufficiently large. We in fact show the alternative result that either (a) the partial sum of $\chi$ itself, or (b) the partial sum of $\chi^\ell$, for ``almost all'' $1 \leq \ell \leq d-1$, exhibits cancellation on the interval $[1,q^{\delta}]$, for any fixed $\delta > 0$. 
  
  By an analogous method, we also show that the P\'{o}lya-Vinogradov inequality may be improved for either $\chi$ itself or for almost all $\chi^\ell$, with $1 \leq \ell \leq d-1$. In particular, our averaged estimates are non-trivial whenever $\chi$ has sufficiently large \emph{even} order $d$.
\end{abstract}
\maketitle

\section{Introduction and main results}
The objective of this paper is to improve the results of \cite{ManHighOrd} on averages of short and maximal sums of a Dirichlet character whose (group-theoretic) order is large. In \cite{ManHighOrd}, the first author considered primitive  Dirichlet characters $\chi$ modulo a prime $q$ with order $d$, under the assumption that $d = d(q) \ra \infty$ as $q \ra \infty$.
In that work the first author investigated how this assumption on $d$ influenced the sizes of the short sums
$$
S_{\chi^\ell}(x) := \sum_{n \leq x} \chi^{\ell}(n), \quad x > q^{\delta}
$$
for arbitrary fixed $\delta > 0$, and the maximal sums
$$
M(\chi^\ell) := \max_{1 \leq t \leq q} \left|\sum_{n \leq t} \chi^{\ell}(n)\right|
$$
for $1 \leq \ell \leq d-1$. The methods of \cite{ManHighOrd} had the defect that they only yielded non-trivial results under the assumption that the least prime factor of $d$ was also assumed to be large. 

In this paper, we rectify this shortcoming by presenting (quantitatively stronger) analogues of the theorems in \cite{ManHighOrd} in which assumptions on the size of the prime factors of $d$ are removed. Moreover, the results in this paper apply to general moduli $q$, rather than just to prime $q$.\\
Our first main theorem is an alternative bound, which states that in the r\'{e}gime that $d \ra \infty$ with $q$, either $|S_{\chi}(x)| = o(x)$ or else the mean-square average of the short character sums $S_{\chi^\ell}(x)$ with length $x = q^{\delta}$ exhibits cancellation. 
\begin{thm}\label{thm:impShort}
Let $q \geq 3$ and let $\chi$ be a primitive Dirichlet character modulo $q$ with order $d \geq 2$. Then there is an absolute constant $c > 0$ such that if $\tau \in (0,1/2)$,
$$
\delta := \max\left\{\left(\frac{\log\log(ed)}{c\log (ed)}\right)^{1/2}, (\log q)^{-c}\right\},
$$
and $x > q^{\delta}$ then at least one of the following is true:
\begin{enumerate}
\item $\chi$ itself satisfies
$$
\frac{1}{x}\left|\sum_{n \leq x} \chi(n)\right| \ll_{\tau} \frac{1}{(\log\log (ed))^{1/6-\tau}};
$$
\item we have the average bound
$$
\frac{1}{d}\sum_{1 \leq \ell \leq d} \left|\frac{1}{x}\sum_{n \leq x} \chi^\ell(n)\right|^2 \ll_{\tau} \frac{1}{(\log\log (ed)^{1/6-\tau}}.
$$
\end{enumerate}
\end{thm}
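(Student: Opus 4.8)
The plan is to argue by contradiction: assume that \emph{neither} of the conclusions (1), (2) holds, and derive a contradiction. Write $L := \log\log(ed)$ and $\eta := L^{-1/6+\tau}$, so that simultaneously $\frac1x|S_\chi(x)| \gg_\tau \eta$ and $\frac1d\sum_{\ell \le d}|\frac1x S_{\chi^\ell}(x)|^2 \gg_\tau \eta$, with implied constants we are free to choose large. Since $|S_{\chi^\ell}(x)/x| \le 1$ for every $\ell$, a first-moment (Markov) argument applied to the failing mean square produces a set $\mathcal{L} \subseteq \{1,\dots,d-1\}$ with $|\mathcal{L}| \gg_\tau \eta d$ on which $|S_{\chi^\ell}(x)/x| \gg_\tau \sqrt{\eta}$.

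\emph{Step 1 (Halász and rigidity).} A Halász-type inequality for short sums of length $x = q^{\delta}$ (a fixed power of the modulus) converts these lower bounds into pretentiousness: for each $\ell \in \mathcal{L}$, and also for $\chi$ itself, there are a primitive character $\psi_\ell$ of conductor at most $(\log x)^{O(1)}$ and a real $t_\ell$ with $|t_\ell| \le (\log x)^{O(1)}$ such that $\mathbb{D}(\chi^\ell, \psi_\ell n^{it_\ell}; x)^2 \ll_\tau \log(1/\eta) \asymp_\tau \log L$. Here the hypothesis that $\chi$ has large order $d$ is first used: the rigidity lemma of \cite{ManHighOrd} forces the data $(\psi_\ell, t_\ell)$ to be approximately homomorphic in $\ell$, so that, after enlarging distances by $O(1)$, we may take $\psi_\ell = \psi^\ell$ and $t_\ell = \ell t$ for a single character $\psi$ of order dividing $d$ and a single $t$. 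Thus $\chi\overline{\psi}\,n^{-it}$ is $O_\tau(\sqrt{\log L})$-close to $1$ in the $\mathbb{D}$-metric, and this estimate can be iterated.

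\emph{Step 2 (additive transference in $\mathbb{Z}/d\mathbb{Z}$).} Using $\mathbb{D}(f_1 f_2, g_1 g_2; x) \le \mathbb{D}(f_1,g_1;x) + \mathbb{D}(f_2,g_2;x)$ along signed sums $\ell_1 \pm \dots \pm \ell_k$ with $\ell_j \in \mathcal{L}$, together with Kneser's theorem (Cauchy--Davenport when $d$ is prime) to see that the $k$-fold signed sumset of $\mathcal{L}$ equals $\mathbb{Z}/d\mathbb{Z}$ once $k \gg_\tau 1/\eta$ — a step in which the degenerate possibility that $\mathcal{L}$ lies in a coset of a proper subgroup must be excluded using that $\chi$ has exact order $d$ — one obtains integers $\ell_1, \dots, \ell_k$ and signs with $N := \sum_j \pm \ell_j \equiv 1 \pmod d$ and $\mathbb{D}(\chi, \psi^N n^{iNt}; x)^2 \ll_\tau k^2 \log L \ll_\tau \eta^{-2}\log L = L^{1/3 - 2\tau}\log L$. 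Since $\psi$ has order dividing $d$ and $N \equiv 1 \pmod d$, the character $\psi^N$ is non-principal as soon as $\psi$ is; comparing with $\mathbb{D}(\psi^N, n^{iu}; x)^2 \gg \log\log x$, valid for \emph{all} $u$ because $\psi^N$ is a fixed non-principal character of conductor $\le (\log x)^{O(1)}$, gives $\mathbb{D}(\chi, n^{iu}; x)^2 \gg \log\log x - O_\tau(L^{1/3-2\tau}\log L) \gg \log\log x$ for every $|u| \le (\log x)^{O(1)}$ (using $\log\log x \gg \log\log q$, that $L \le \log\log q + O(1)$, and $\delta \ge (\log q)^{-c}$). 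A final application of Halász then yields $|S_\chi(x)/x| \ll \exp(-c'\log\log x) + (\log x)^{-c'} \ll (\log q)^{-c''} \ll_\tau \eta$, contradicting the failure of (1).

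\emph{The main obstacle.} The case not covered above is precisely $\psi$ \emph{principal}, i.e. $\chi$ correlating with a pure Archimedean character $n^{it}$ — equivalently, $\chi$ being abnormally close to $1$ on $[1,q^{\delta}]$ — since then the comparison step collapses. Handling this case is the heart of the matter, and is exactly where the refined mean-square estimate of the first author, and the removal of the hypothesis on the least prime factor of $d$, are essential: one must show that a character of large order $d$ cannot pretend to $n^{it}$ on so short an interval without forcing the power sums $S_{\chi^\ell}(x)$ to cancel on average in any case. Calibrating this against the Halász losses — the balance $k \asymp \eta^{-1}$ versus $\mathbb{D}(\chi,\cdot;x)^2 \asymp k^2\log L$ versus $\log\log x \gg \log\log q$, which is what fixes the exponent $1/6$ and forces the $\tau$-loss — uniformly in $q$ and $d$ is the delicate part of the argument.
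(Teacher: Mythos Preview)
Your setup has a structural problem: for Ces\`aro sums of length $x = q^{\delta}$ with $\delta$ bounded below, the Hal\'asz--Montgomery--Tenenbaum inequality (Lemma~\ref{lem:HMT} in the paper) only produces \emph{archimedean} twists $n^{it}$, never a small-conductor Dirichlet character $\psi$. The non-archimedean twist enters only in the P\'olya--Vinogradov problem (Theorem~\ref{thm:impMax} and Proposition~\ref{prop:maxStruc}). So your Step~1 should yield $\mb{D}(\chi^{\ell},n^{it_\ell};x)^2 \ll_\tau \log(1/\eta)$ with no $\psi_\ell$, and then the approximate-homomorphism argument (Ruzsa, as in Proposition~\ref{prop:shortStruc}) forces $\max_\ell |t_\ell|\log x = O_\tau(1)$, hence effectively $t_\ell = 0$. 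The upshot is that $\mb{D}(\chi^{\ell},1;x)^2 \ll_\tau m^2\log(1/\eta)$ uniformly over $\ell$ in the generated subgroup. Your entire Step~2---comparing against $\mb{D}(\psi^N,n^{iu};x)^2 \gg \log\log x$ for non-principal $\psi^N$---therefore never applies, and you are always in what you call the ``main obstacle'' case.

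This means your proposal stops exactly where the argument begins. The paper's proof, after reaching $\mb{D}(\chi^\ell,1;x)^2 = O_\tau(m^2\log(1/\e))$ for all $\ell$, proceeds by studying the fibre sums $\sg_j(x) = \sum_{p\le x,\,\chi(p)=e(j/d)} 1/p$ and splitting into two cases. If $\max_j \sg_j(x)$ is small relative to $\Sigma_\chi(x)$, a Fourier-analytic variance computation (Proposition~\ref{prop:smallSig}) shows that $\sum_j \|j\ell/d\|^2 \sg_j(x)$ is close to $\tfrac{1}{12}\Sigma_\chi(x)$ for \emph{some} $\ell$, forcing $\mb{D}(\chi^\ell,1;x)^2 \gg \Sigma_\chi(x)$, a contradiction. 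If instead some $\sg_{j_0}(x) \gg \eta\Sigma_\chi(x)$, a Tur\'an--Kubilius argument over products of $k$ primes from the $j_0$-fibre (Proposition~\ref{prop:largekBd}) yields $|S_{\chi^\ell}(x)| \ll_k \|j_0\ell/d\|^{-k}\sg_{j_0}(x)^{-k/2}x$, and a discrepancy bound (Proposition~\ref{prop:discBound}, which crucially uses $|S_\chi(x)|\ge \e x$ to control $(j_0,d)$) finds $\ell\in\mc{C}_d(\e)$ with $\|j_0\ell/d\|$ bounded below, again a contradiction for the right choice of $k$. None of this machinery appears in your sketch, and it is precisely what fixes the exponent $1/6$.
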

This should be compared with \cite[Thm. 2]{ManHighOrd}, in which the averaged bound was only non-trivial under the assumption that $P^-(d) \ra \infty$ with $d$, and the savings only comparable if $P^-(d) \gg \log\log d$. \\
As in \cite{ManHighOrd}, a mean-square bound like Theorem \ref{thm:impShort} (together with some additional inputs) may be used to prove a paucity phenomenon for the level sets of $\chi$. In this direction, our second main theorem provides a non-trivial upper bound for the cardinality of the set of solutions $n \leq x$ with $\chi(n) = \alpha$, for any fixed $d$th order root of unity $\alpha$, whenever $d \ra \infty$ and $x > q^\delta$ for $\delta > 0$ fixed but arbitrary. This strictly generalises \cite[Thm. 1]{ManHighOrd}, wherein the condition that $d$ be squarefree had to be assumed. 
\begin{thm} \label{thm:paucity}
There are absolute constant $c_1,c_2 > 0$ such that the following holds. \\
Let $q \geq 3$ and let $\chi$ be a primitive Dirichlet character modulo $q$ with order $d \geq 2$. For each $z \geq 1$, define
$$
d_z := \prod_{\ss{p^k || d \\ p > z}} p^k, \quad
\delta_z := \max\left\{\left(\frac{\log\log(ed_z)}{c_1\log (ed_z)}\right)^{1/2}, (\log q)^{-c_1}\right\}.
$$
Then if $x > q^{\delta_1}$,
\begin{equation} \label{eq:infBdpau}
\max_{\alpha^d = 1} \frac{1}{x}|\{n \leq x : \chi(n) = \alpha\}| \leq \inf_{\ss{1 \leq z \leq \log\log (ed) \\ x > q^{\delta_z}}} \left(\frac{1}{z} + O\left(\frac{1}{(\log\log (ed_z))^{c_2}}\right)\right).
\end{equation}
\end{thm}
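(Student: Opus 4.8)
The plan is to deduce Theorem~\ref{thm:paucity} from Theorem~\ref{thm:impShort} by detecting the level set via orthogonality and then iterating on the prime factorisation of $d$. Fix an admissible $z$ (so $d_z>1$, the bound being vacuous otherwise, and $x>q^{\delta_z}$), and set $\psi:=\chi^{d/d_z}$, a character of order $d_z$ induced by a primitive character $\psi^*$ of conductor $q^*\mid q$ and order $d_z$. Since $\chi(n)=\alpha$ forces $\psi^*(n)=\beta:=\alpha^{d/d_z}$, and since level sets are nested, it suffices to bound the density of $\{n\le x:\psi^*(n)=\beta\}$ --- passing through $\psi^*$ in this way (rather than via a M\"obius expansion of $S_\psi$) avoids a spurious $\log\log q$ loss. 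Orthogonality on $\mathbb{Z}/d_z\mathbb{Z}$ gives
$$
\frac1x\bigl|\{n\le x:\psi^*(n)=\beta\}\bigr| = \frac1{d_z}\sum_{\ell=0}^{d_z-1}\bar\beta^{\,\ell}\,\frac1x S_{(\psi^*)^\ell}(x) \le \frac1{d_z}+\frac1{d_z}\sum_{\ell=1}^{d_z-1}\Bigl|\frac1x S_{(\psi^*)^\ell}(x)\Bigr|,
$$
the $\ell=0$ term being $\le 1/d_z$, so everything reduces to controlling the mean of $|S_{(\psi^*)^\ell}(x)|$ over $1\le\ell\le d_z-1$.

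Next I would apply Theorem~\ref{thm:impShort} not just to $\psi^*$ but to all its Galois conjugates $(\psi^*)^j$ with $\gcd(j,d_z)=1$; each has conductor $q^*\le q$ and order $d_z$, and $x>q^{\delta_z}$ exceeds the admissible threshold in Theorem~\ref{thm:impShort} for order-$d_z$ characters of conductor $\le q$ (this is why a single hypothesis on $x$ suffices: passing to conjugates never raises the conductor). The key point is that the mean-square $\tfrac1{d_z}\sum_{1\le\ell\le d_z}|\tfrac1x S_{((\psi^*)^j)^\ell}(x)|^2$ is independent of $j$, since $\ell\mapsto j\ell$ permutes $\mathbb{Z}/d_z\mathbb{Z}$. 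Hence either \textbf{(a)} case (1) of Theorem~\ref{thm:impShort} holds for \emph{every} such conjugate, i.e. $\tfrac1x|S_{(\psi^*)^j}(x)|\ll_\tau (\log\log(ed_z))^{-1/6+\tau}$ for all $j$ coprime to $d_z$; or \textbf{(b)} case (2) holds for some (equivalently every) such conjugate.

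In case (b), Cauchy--Schwarz in the first display yields
$$
\frac1x\bigl|\{n\le x:\psi^*(n)=\beta\}\bigr| \le \frac1{d_z}+\Bigl(\frac1{d_z}\sum_{\ell=1}^{d_z-1}\Bigl|\frac1x S_{(\psi^*)^\ell}(x)\Bigr|^2\Bigr)^{1/2} \ll_\tau \frac1{d_z}+\frac1{(\log\log(ed_z))^{1/12-\tau}},
$$
which, since $d_z>z$, is $\le\tfrac1z+O((\log\log(ed_z))^{-c_2})$ for suitable $c_2$. In case (a), the terms with $\gcd(\ell,d_z)=1$ in the first display are all small, and the remaining $\ell$ with $\gcd(\ell,d_z)=e>1$ are handled by writing $(\psi^*)^\ell=((\psi^*)^e)^{\ell/e}$ and applying M\"obius inversion over $\mathbb{Z}/(d_z/e)\mathbb{Z}$: the sums $\sum_{\gcd(\ell,d_z)=e}\bar\beta^{\,\ell}S_{(\psi^*)^\ell}(x)$ become finite combinations of level-set counts for the lower-order characters $(\psi^*)^{ef}$, $f\mid d_z/e$, plus $O(1)$-per-divisor sieve terms dominated by $x\,d_z^{-1+o(1)}$. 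This reduces the problem to the same type of bound for primitive characters of order a proper divisor of $d_z$ and conductor $\le q$, up to an admissible error $O((\log\log(ed_z))^{-c})$; iterating (the order strictly decreases, and each stage is again case (b), which terminates, or case (a), which recurses), the recursion bottoms out at characters of prime order $p>z$, where case (a) forces the partial sums of all $p-1$ conjugates to be small and orthogonality closes the estimate with main term $1/p<1/z$. Taking the infimum over admissible $z$ finishes the proof.

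I expect the main obstacle to be the bookkeeping in case (a): organising the iteration so that the divisor-function and M\"obius losses from regrouping the sums $S_{(\psi^*)^\ell}(x)$ into lower-order level-set counts remain negligible, and so that the iterated-logarithm savings do not degrade down the recursion --- which holds precisely in the r\'egime where Theorem~\ref{thm:paucity} has content, namely $d_z$ large. Checking that $x>q^{\delta_z}$ is enough at every level of the recursion, together with the routine but repeated passages to primitive inducing characters (via containment of level sets at each step), is the other point demanding care.
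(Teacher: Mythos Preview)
Your reduction to $\psi=\chi^{d/d_z}$ and the use of orthogonality match the paper, and the Cauchy--Schwarz step in your case (b) is exactly the first bound of Lemma \ref{lem:ETL2Alt}. The gap is in case (a).

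Your recursion applies Theorem \ref{thm:impShort} to $(\psi^*)^e$, a character of order $d_z/e$, at each branch; the savings this gives are only $(\log\log(d_z/e))^{-c}$, which degrade as $e$ grows. Meanwhile the M\"obius expansion over divisors of $d_z$ has total variation $\sum_{g\mid d_z}|\mu(g)|/g=\prod_{p\mid d_z}(1+1/p)$, which can be as large as $\asymp\log\log d_z$ (take $d_z$ a product of many primes just above $z$). These two losses conspire: summing $\tfrac{1}{e}(\log\log(d_z/e))^{-c'}$ over $e\mid d_z$ with $c'<1$ need not be $o(1)$, and at the bottom of the tree (order a single prime $p$ just above $z$) Theorem \ref{thm:impShort} contributes an error $\ll(\log\log p)^{-c}$, which is not small. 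Your acknowledgment that the ``bookkeeping'' is the main obstacle is accurate, but no mechanism is offered to defeat it, and I do not see one that uses Theorem \ref{thm:impShort} only as a black box.

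The paper proceeds differently and does \emph{not} recurse. It introduces a second device, the Erd\H{o}s--Tur\'an inequality (the second bound of Lemma \ref{lem:ETL2Alt}), giving
\[
M_{\tilde d,\tilde\chi}(x)\le \frac{x}{\tilde d}+\frac{x}{K+1}+O\Bigl(\sum_{1\le k\le K}\frac{|S_{\tilde\chi^k}(x)|}{k}\Bigr)
\]
for any $K\ge1$. The point is then to show that in every subcase where the $L^2$ bound is unavailable, one can take $K\ge P^-(\tilde d)-1$ with all of $|S_{\tilde\chi^k}(x)|$, $1\le k\le K$, small. This does not follow from the \emph{statement} of Theorem \ref{thm:impShort}; it requires opening up its proof and using the structure theorem for $\mc{C}_{\tilde d}(\e)$ (Proposition \ref{prop:shortStruc}). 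Concretely: if the parameter $g$ there exceeds $1$ then $g\ge P^-(\tilde d)$ and $k\notin\mc{C}_{\tilde d}(\e)$ for $1\le k<g$, so one may take $K=g-1$; and if $g=1$, $|S_{\tilde\chi}|$ is small, and the exceptional index $j_0$ has $(j_0,\tilde d)>\theta\tilde d$, then a direct Hal\'asz argument shows $|S_{\tilde\chi^k}(x)|$ is small for all $1\le k<R:=\tilde d/(j_0,\tilde d)\ge P^-(\tilde d)$, so one may take $K=R-1$. It is this single application of Erd\H{o}s--Tur\'an with the ``right'' $K$, fed by the internals of the proof of Theorem \ref{thm:impShort}, that produces the $1/P^-(\tilde d)<1/z$ main term without any divisor sum or iteration.
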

\begin{rem}
Let us show that Theorem \ref{thm:paucity} indeed generalises \cite[Thm. 1]{ManHighOrd}. If $d$ is squarefree and $z = \log\log(ed)$ then by the prime number theorem,
$$
d/d_z \leq \prod_{p \leq z} p \leq e^{2z} \asymp (\log d)^2,
$$
provided $d$ is sufficiently large. The upper bound from Theorem \ref{thm:paucity} is thus of quality $O(1/(\log\log (ed))^{c_2})$, which is comparable (albeit with a less explicit $\log\log(ed)$ power) with \cite[Thm. 1]{ManHighOrd}. 
\end{rem}
\begin{rem}
To explain the form of the upper bound given in Theorem \ref{thm:paucity} it is helpful to consider a case, not covered in \cite[Thm. 1]{ManHighOrd}, where $d$ is a prime power, say $d = 2^k$. In this case, $d_2 = 1$, so that, taking $z \ra 2^-$, the upper bound provided by Theorem \ref{thm:paucity} is precisely
\begin{equation}\label{eq:pow2}
\max_{\alpha^{2^k} = 1} \frac{1}{x}|\{n \leq x : \chi(n) = \alpha\}| \leq \frac{1}{2} + o_{k \ra \infty}(1)
\end{equation}
(and indeed this is the worst-possible bound that \eqref{eq:infBdpau} provides in general). It can be shown that Theorem \ref{thm:impShort} implies the bound
$$
\max_{\alpha^{2^k} = 1} \frac{1}{x}|\{n \leq x : \chi(n) = \alpha\}| \ll \frac{1}{(\log k)^{1/13}} \text{ whenever } |S_{\chi}(x)| > \frac{x}{(\log k)^{2/13}},
$$
which is of course much stronger as $k \ra \infty$. In the converse case that $|S_{\chi}(x)|$ is small, however, the following heuristically plausible scenario is consistent with \eqref{eq:pow2}. \\
Suppose that $\chi$ has order $2^k$, but satisfies $\chi(p) = \pm 1$ for all $p \leq x$ and
$$
\sum_{\ss{p \leq x \\ \chi(p) = -1}} \frac{1}{p} \ra \infty \text{ as } k \ra \infty.
$$
Thus, $\chi$ is a real-valued multiplicative function on $[1,x]$. By a theorem of Hall and Tenenbaum \cite{HT}, we obtain
$$
\left|S_{\chi}(x)\right| \ll x \exp\left(-\frac{1}{4} \sum_{p \leq x} \frac{1-\chi(p)}{p}\right) = o_{k \ra \infty}(x).
$$
Since $\chi(n) \in \{-1,+1\}$ for all $n \leq x$, this is equivalent to
$$
\max_{\alpha \in \{-1,+1\}} \frac{1}{x}|\{n \leq x : \chi(n) = \alpha\}| = \frac{1}{2} + o_{k \ra \infty}(1),
$$
which is precisely of the form \eqref{eq:pow2}.
\end{rem}
We obtain the following straightforward consequence of Theorem \ref{thm:paucity}.
\begin{cor} \label{cor:paucity}
Assume the notation and hypotheses of Theorem \ref{thm:paucity}, and let
$$
\delta := \max\left\{\left(\frac{\log\log(eP^+(d))}{c_1\log(eP^+(d))}\right)^{1/2}, (\log q)^{-c_1}\right\}.
$$
Then if $x > q^{\delta}$ we get
$$
\max_{\alpha^d = 1} \frac{1}{x} |\{n \leq x : \chi(n) = \alpha\}| \ll \frac{1}{(\log\log (eP^+(d)))^{c_2}}. 
$$
\end{cor}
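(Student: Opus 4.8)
The plan is to apply Theorem \ref{thm:paucity} with the single choice $z = P^+(d) - 1$ (or any $z$ slightly below $P^+(d)$), so that only the largest prime factor of $d$ survives in $d_z$. First I would observe that with this choice of $z$ we have $p^k \| d$ and $p > z$ precisely when $p = P^+(d)$, whence $d_z = P^+(d)^{k_0}$, where $k_0 \geq 1$ is the exponent of $P^+(d)$ in $d$. In particular $d_z \geq P^+(d)$, so $\log\log(ed_z) \gg \log\log(eP^+(d))$, and therefore $\delta_z \leq \delta$ with $\delta$ as defined in the statement of the corollary (here one uses that $t \mapsto (\log\log(et)/(c_1\log(et)))^{1/2}$ is decreasing for $t$ large, together with monotonicity of $\log q$-term, and absorbs the finitely many small cases into the implied constants or by adjusting $c_1$). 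Hence the hypothesis $x > q^{\delta}$ guarantees $x > q^{\delta_z}$, so this value of $z$ is admissible in the infimum \eqref{eq:infBdpau}, provided also $z \leq \log\log(ed)$; I will address this last constraint below.

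Next I would plug $z = P^+(d) - 1$ into the right-hand side of \eqref{eq:infBdpau}. The term $1/z$ becomes $1/(P^+(d)-1) \ll 1/\log\log(eP^+(d))$ trivially (indeed it is much smaller), and the error term is $O(1/(\log\log(ed_z))^{c_2}) = O(1/(\log\log(eP^+(d)))^{c_2})$ by the comparison $d_z \geq P^+(d)$ established above. Combining, the bound from Theorem \ref{thm:paucity} reads
$$
\max_{\alpha^d = 1} \frac{1}{x}|\{n \leq x : \chi(n) = \alpha\}| \ll \frac{1}{(\log\log(eP^+(d)))^{c_2}},
$$
which is exactly the claimed estimate (after relabelling $c_2$ if necessary, since $1/(P^+(d)-1)$ is dominated by the error term for all but finitely many values of $P^+(d)$, and the finitely many exceptional cases are handled by enlarging the implied constant).

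The one genuine subtlety — and the step I expect to require the most care — is reconciling the range restriction $1 \leq z \leq \log\log(ed)$ in the infimum with the choice $z \approx P^+(d)$. If $P^+(d) \leq \log\log(ed) + 1$ there is nothing to do. In the opposite regime $P^+(d) > \log\log(ed) + 1$, one instead takes $z = \lfloor \log\log(ed)\rfloor$; then $d_z$ still contains the factor $P^+(d)^{k_0}$ (since $P^+(d) > z$), so again $d_z \geq P^+(d)$ and the error term is $O(1/(\log\log(eP^+(d)))^{c_2})$, while the leading term $1/z = 1/\lfloor\log\log(ed)\rfloor \ll 1/\log\log(eP^+(d))$ because $P^+(d) \leq d$ forces $\log\log(eP^+(d)) \leq \log\log(ed)$. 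In either case the two contributions are $\ll 1/(\log\log(eP^+(d)))^{c_2}$, and one must also check in this second regime that $\delta_z \leq \delta$, which again follows from $d_z \geq P^+(d)$ and monotonicity. Assembling the two regimes gives the corollary; all remaining steps are routine monotonicity checks and constant bookkeeping.
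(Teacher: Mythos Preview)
Your proposal is correct and follows essentially the same approach as the paper: choose $z$ just below $P^+(d)$ so that $d_z \geq P^+(d)$, then read off the bound from \eqref{eq:infBdpau}. In fact you are more careful than the paper, which simply lets $z \to P^+(d)^-$ without addressing the constraint $z \leq \log\log(ed)$; your case split handling the regime $P^+(d) > \log\log(ed)+1$ (by taking $z = \lfloor \log\log(ed)\rfloor$ instead) fills this small gap cleanly.
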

Corollary \ref{cor:paucity} shows that as long as $P^+(d) \ra \infty$ with $d$, the level sets $|\{n \leq x : \chi(n) = \alpha\}|$ are sparse as soon as $x > q^{\delta}$, for any fixed, but otherwise arbitrary, $\delta > 0$.  Note that this property is fairly generic, only excluding orders $d$ that are very smooth (and hence rare).\\
In \cite{ManHighOrd} the first author also gave a non-trivial average bound for the maximal character sums $M(\chi^\ell)$, $1 \leq \ell \leq d-1$.
The P\'{o}lya-Vinogradov inequality states that for a non-principal character $\psi$ of modulus $m$ we have $M(\psi) \ll \sqrt{m} \log m$. It is a long-standing open problem to obtain unconditional improvements (as $m \ra \infty$) to this bound for general $\psi$. In \cite{ManHighOrd} (see Theorem 3 there), the bound
$$
\frac{1}{d}\sum_{1 \leq \ell \leq d-1} M(\chi^\ell) \ll \left(\sqrt{q}\log q\right) \left(\frac{1}{P^-(d)} + \sqrt{\frac{\log\log \log q}{\log \log q}}\right)
$$
was obtained by appealing to combinatorial arguments. Clearly, this bound is non-trivial only when $d$ has no small prime factors and therefore must be \emph{odd}. Well-known work of Granville and Soundararajan \cite{GSPret} (with refinements in \cite{Gold} and \cite{LamMan}) previously showed that $M(\chi) = o(\sqrt{q}\log q)$ whenever $\chi$ has \emph{odd} order $d = o(\sqrt{\log\log q})$, so that this result is \emph{only new} when 
$$
d \gg \sqrt{\log\log q} \text{ and } P^-(d) \ra \infty \text{ as } d \ra \infty. 
$$
Our next theorem remedies this situation, providing non-trivial bounds as soon as $d \ra \infty$ (including the case that $d$ is even).
\begin{thm} \label{thm:impMax}
Let $q \geq 3$ and let $\chi$ be a primitive Dirichlet character modulo $q$ with order $d \geq 2$. Then at least one of the following statements is true:
\begin{enumerate}[(i)]
\item $\chi$ itself satisfies
$$
M(\chi) \ll \frac{\sqrt{q}\log q}{(\log\log (ed))^{1/8}}
$$
\item we have
$$
\frac{1}{d}\sum_{1 \leq \ell \leq d-1} M(\chi^\ell) \ll \frac{\sqrt{q}\log q}{(\log\log(ed))^{1/8}}.
$$
\end{enumerate}
\end{thm}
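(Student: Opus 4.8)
My plan is to reduce the maximal-sum dichotomy to the short-sum dichotomy of Theorem \ref{thm:impShort} by way of a completion/Fourier-expansion argument, in the spirit of Granville--Soundararajan \cite{GSPret} and its refinements. Recall the standard identity expressing the maximal partial sum of a primitive character $\psi$ modulo $m$ in terms of a smoothed dual sum: for suitable ranges one has, up to acceptable error terms, $M(\psi) \ll \frac{\sqrt m}{\pi}\bigl|\sum_{1 \le n \le y} \frac{\bar\psi(n)}{n}\bigr| + (\text{error})$ after choosing the optimal truncation point, or more precisely the Polya Fourier expansion $\sum_{n \le t}\psi(n) = \frac{\tau(\psi)}{2\pi i}\sum_{1 \le |n| \le H}\frac{\bar\psi(n)}{n}\bigl(1 - e(-nt/m)\bigr) + O(1 + m\log m/H)$. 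Taking $H$ a suitable power of $m$, the size of $M(\chi^\ell)$ is governed (up to $\sqrt q$ and lower-order factors) by the size of the short Dirichlet-series-type sum $\sum_{n \le H}\bar\chi^\ell(n)/n$. The first step, then, is to make this reduction uniform in $\ell$, so that controlling $\frac1d\sum_\ell M(\chi^\ell)^2$ (or the first moment) is reduced to controlling $\frac1d\sum_\ell \bigl|\sum_{n \le H}\bar\chi^\ell(n)/n\bigr|^2$.

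The second step is to relate these logarithmically-weighted sums to the unweighted short sums $S_{\chi^\ell}(x)$ appearing in Theorem \ref{thm:impShort}. By partial summation, $\sum_{n \le H}\bar\chi^\ell(n)/n = \int_1^H \frac{S_{\bar\chi^\ell}(t)}{t^2}\,dt + S_{\bar\chi^\ell}(H)/H$, so a bound of the form $|S_{\chi^\ell}(t)| \le \varepsilon_\ell\, t$ valid for all $t$ in a dyadic range above $q^\delta$ translates into a bound $\varepsilon_\ell \log H + O(1)$ for the weighted sum, with the contribution of $t < q^\delta$ being trivially $O(\delta \log q) = O(\log H)$ only if $\delta$ is bounded away from $0$ — which it is not in general. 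So I will instead want to feed in the mean-square conclusion (2) of Theorem \ref{thm:impShort} applied not at a single scale but across all dyadic scales $q^\delta \le t \le q$ simultaneously (there are only $O(\log q)$ of them, and one checks the hypothesis of Theorem \ref{thm:impShort} is scale-invariant in the relevant sense once $x > q^\delta$), Cauchy--Schwarz over $\ell$ and over the $O(\log q)$ dyadic scales, and the short interval below $q^\delta$ handled by Polya--Vinogradov / Burgess trivially since its contribution to the logarithmic sum is $O(\delta \log q + \log q^\delta) = O(\delta\log q)$, which after dividing by $\log q$ is $O(\delta)$; since $\delta \to 0$ in the regime $d, q \to \infty$ this is negligible. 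This yields: if alternative (1) of Theorem \ref{thm:impShort} fails at every dyadic scale, then $\frac1d\sum_\ell \bigl|\sum_{n\le H}\bar\chi^\ell(n)/n\bigr|^2 \ll (\log q)^2 (\log\log(ed))^{-1/6+\tau}$, hence $\frac1d\sum_\ell M(\chi^\ell) \ll \sqrt q \log q\,(\log\log(ed))^{-1/12+\tau/2}$, and absorbing constants and replacing $1/12$ by the slightly worse $1/8$ after accounting for the Cauchy--Schwarz loss over scales (or simply being lossy) gives alternative (ii); while if alternative (1) holds at some scale $x > q^\delta$, then running the same partial-summation/completion argument for $\chi$ alone (now with a genuine pointwise bound $|S_\chi(t)| \ll t(\log\log(ed))^{-1/6+\tau}$ at that scale, plus Burgess below it) yields $M(\chi) \ll \sqrt q\log q\,(\log\log(ed))^{-1/8}$, which is alternative (i).

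The main obstacle is the mismatch between the single-scale hypothesis of Theorem \ref{thm:impShort} and the all-scales input needed to control a logarithmically-weighted sum up to $H = q^{O(1)}$: one must ensure that failure of alternative (1) can be leveraged uniformly across the dyadic range $[q^\delta, q]$ without the constants degrading, and that the genuinely uncontrolled bottom range $[1, q^\delta]$ contributes negligibly after normalisation — this is exactly where $\delta = o(1)$ (guaranteed by $d \to \infty$) is essential, and where the exponent is forced down from $1/12$ to $1/8$. A secondary technical point is handling imprimitivity-type issues in the Polya expansion for $\chi^\ell$ when $\chi^\ell$ is not primitive modulo $q$ (it is induced by a character of smaller conductor $q_\ell \mid q$); here $M(\chi^\ell) \le M(\chi^\ell_{\mathrm{prim}}) + O(\sqrt{q_\ell}\,\tau(q/q_\ell)\log q)$ or a similar bound, and since $\sqrt{q_\ell} \le \sqrt q$ this only costs divisor-function factors that are absorbed into the $\tau$-dependence, exactly as in \cite{ManHighOrd}.
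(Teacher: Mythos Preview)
Your reduction to Theorem \ref{thm:impShort} has a genuine gap in the second case (alternative (1) holding at some scale). Theorem \ref{thm:impShort} is a dichotomy \emph{at each fixed scale} $x$: for a given $x > q^\delta$, either $|S_\chi(x)|/x$ is small or the $\ell$-average of $|S_{\chi^\ell}(x)|^2/x^2$ is small. There is no reason for the same alternative to hold across every dyadic scale in $[q^\delta,q]$. If alternative (1) happens to hold at a single scale $x_0$, you learn only that $|S_\chi(x_0)| \ll x_0 (\log\log d)^{-1/6+\tau}$, which tells you nothing about $M(\chi)$: the P\'olya expansion together with partial summation controls $M(\chi)$ in terms of $|S_{\bar\chi}(t)|/t$ (or a twist thereof) integrated over \emph{all} $t$ up to $q$, and neither Burgess nor the trivial bound supplies the missing scales with a $(\log\log d)^{-c}$ saving. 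More fundamentally, the two dichotomies concern different pretentiousness targets: alternative (i) of Theorem \ref{thm:impMax} fails precisely when $\chi$ is pretentious to some fixed \emph{Dirichlet character} $\xi$ of small conductor (this is the content of Proposition \ref{prop:MpsiAsymp}), whereas alternative (1) of Theorem \ref{thm:impShort} fails when $\chi$ is pretentious to an \emph{archimedean} character $n^{it}$. These are distinct structural hypotheses, and one does not reduce to the other.

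The paper therefore does not attempt this reduction; it reruns the strategy of Theorem \ref{thm:impShort} in the maximal-sum setting from scratch. One proves a separate structure theorem (Proposition \ref{prop:maxStruc}) for $\mc{L}_d(\e) = \{\ell : M(\chi^\ell) > \e\sqrt q \log q\}$: assuming $|\mc{L}_d(\e)| \geq \e d$, an iterated-sumset argument produces $g \leq \e^{-1}$ and a single primitive character $\xi$ of conductor $\ll_\e 1$ with $\mb{D}(\chi^{g\ell},\xi^\ell;q)^2 \ll_\e 1$ uniformly over $1 \leq \ell \leq d/g$. Setting $\psi := \chi^g\bar\xi$, one then analyses the prime level sets $\tilde{\sg}_j = \sum_{p\le q,\,\psi(p)=e(j/r)} 1/p$ exactly as in Section \ref{sec:ces}, but with the Ces\`aro sums $S_{\chi^{g\ell}}(x)$ replaced throughout by logarithmic sums $L_{\psi^\ell}(N_\ell)$, and with the Dirichlet twist $\xi$ playing the role that $n^{it}$ played before. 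The Tur\'an--Kubilius step is carried out directly on $L_{\psi^\ell}(N_\ell)$, using only the elementary estimate $L_{\psi^\ell}(N_\ell/p) = L_{\psi^\ell}(N_\ell) + O(\log p)$ in place of the Lipschitz bound needed for Ces\`aro averages. In this way the proof works with logarithmic averages from the outset and never has to integrate a scale-by-scale dichotomy.
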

\subsection{Proof strategy}
Let us describe separately the strategy of proof of each of the main theorems in this paper.
\subsubsection{On averages of short character sums}
The proof of Theorem \ref{thm:impShort} largely follows the line of attack of \cite{ManHighOrd}, introducing refinements of the key lemmas at several junctures. \\
Ideally, we would like to prove that $|S_{\chi}(x)| = o_{d \ra \infty}(x)$ when $x > q^{\delta}$ for $\delta \in (0,1)$, and the first alternative of Theorem \ref{thm:impShort} is consistent with this goal. We shall mainly discuss the consequences of assuming that this alternative in fact fails.\\
As in \cite{ManHighOrd}, given small parameters $\delta,\e \in (0,1)$ we study the structure of the ``large spectrum'' set
$$
\mc{C}_d(\e) := \{1 \leq \ell \leq d-1 : |S_{\chi^\ell}(x)| \geq \e x\}, \quad x > q^\delta.
$$
If $|\mc{C}_d(\e)| \leq \e d$ then the $L^2$ average of $|S_{\chi^\ell}(x)|$ is $\ll \e$. Our main objective is to prove that this upper bound on $|\mc{C}_d(\e)|$ indeed holds. \\
Suppose instead that $|\mc{C}_d(\e)| > \e d$. In this case we use results from additive combinatorics to derive a structure theorem for $\mc{C}_d(\e)$ (see Proposition \ref{prop:shortStruc}). Precisely, we show that there is $m = O_{\e}(1)$ such that the $m$-fold sumset
$$
m\mc{C}_d(\e) := \{a_1 + \cdots + a_m \pmod{d} : a_j \in \mc{C}_d(\e) \text{ for all } 1 \leq j \leq m\}
$$
coincides with a large subgroup $H \leq \mb{Z}/d\mb{Z}$, and that for each $\ell \in H$ the character $\chi^{\ell}$ ``pretends to be'' an archimedean character $n^{it_{\ell}}$, with $\max_{\ell \in H} |t_{\ell}|\log x = O_{\e}(1)$. As a consequence, we deduce that
\begin{equation} \label{eq:distboundIntro}
\sum_{p \leq x} \frac{1-\text{Re}(\chi(p)^\ell)}{p} = O_{\e}(1) \text{ uniformly over } \ell \in H.
\end{equation}
Unlike in \cite{ManHighOrd} where $P^-(d)$ was assumed to be large, here the subgroup $H$ need not be the entirety of $\mb{Z}/d\mb{Z}$. Nevertheless, the fact that $|H| \gg_{\e} d$ is what is crucial in the forthcoming analysis. \\
As in \cite{ManHighOrd}, the argument then splits according to the nature of the prime level sets\footnote{As usual, given $t \in \mb{R}$ we write $e(t) := e^{2\pi i t}$.}
$$
S_j := \{p \leq x : \chi(p) = e(j/d)\}, \quad 1 \leq j \leq d-1,
$$
and in particular the associated reciprocal sums
$$
\sg_j(x) := \sum_{\ss{p \leq x \\ p \in S_j}} \frac{1}{p}, \quad 1 \leq j \leq d-1.
$$
After showing that 
$$
\Sigma_{\chi}(x) := \sum_{\ss{p \leq x \\ \chi(p) \neq 0,1}} \frac{1}{p} = \sum_{1 \leq j \leq d-1} \sg_j(x) \ra \infty \text{ as } d \ra \infty
$$
(see Proposition \ref{prop:LowBdSigma}, which is a very slight generalisation of \cite[Thm. 1.1]{ManHighOrd} to composite moduli $q$), we consider two cases. First, if $\max_{1 \leq j \leq d-1} \sg_j(x)$ is rather small compared to $\Sigma_{\chi}(x)$ then we show that there is $\ell \in H$ such that \eqref{eq:distboundIntro} cannot hold (see Lemma \ref{prop:smallSig}). This follows the lines of \cite[Lem. 4.4]{ManHighOrd}. Namely, having first observed that
$$
\sum_{p \leq x} \frac{1-\text{Re}(\chi^\ell(p))}{p} \geq 8 \sum_{1 \leq j \leq d-1} \|\frac{j\ell}{d}\|^2 \sg_j(x),
$$
we use Fourier analysis to obtain a lower bound for the left-hand side sum for \emph{some} $\ell \in H$ by showing a variance bound of the shape\footnote{Given $t \in \mb{R}$ we write $\|t\| := \min\{\{t\}, 1-\{t\}\}$.}
$$
\frac{1}{|H|} \sum_{\ell \in H} \left(\sum_{1 \leq j \leq d-1} \left\|\frac{j\ell}{d} \right\|^2 \sg_j(x) - \frac{1}{12}\Sigma_{\chi}(x)\right)^2 = o_{d \ra \infty}(\Sigma_{\chi}(x)^2).
$$
Whereas the argument in \cite[Lem. 4.4]{ManHighOrd} made use of the fact that $P^-(d)$ was large, we manage to circumvent this assumption by a more careful argument. \\
In the case that $\sg_{j_0}(x) := \max_{1 \leq j \leq d-1} \sg_j(x) \gg \Sigma_{\chi}(x)$ we provide a quantitatively stronger variant of \cite[Prop. 4.5]{ManHighOrd}. The idea there was to establish an asymptotic of the shape
\begin{equation} \label{eq:comptwistedSchi}
\sum_{n \leq x} \chi^\ell(n) =  e(j_0 \ell/d)\sum_{n \leq x} \chi^\ell(n) + o_{d \ra \infty}(x),
\end{equation}
by using the Tur\'{a}n-Kubilius inequality\footnote{A similar application of this idea will be discussed in Section \ref{subsubsec:max} below}  to show that most integers $n \leq x$ have $\sim \sg_{j_0}(x)$ prime divisors $p \in S_{j_0}$. For each of these prime divisors, if $n = mp$ then $\chi^\ell(n) = e(j_0\ell/d) \chi^\ell(m)$, and using Lipschitz estimates for multiplicative functions the partial sum $S_{\chi^\ell}(x)$ for $n \leq x$ can be well-approximated by the sum $S_{\chi^\ell}(x/p)$ for $m\leq x/p$ (as long as $p$ is not too large). \\
Our refinement of this idea, found in Proposition \ref{prop:largekBd} below, generalises this from single primes $p \in S_{j_0}$ to products of $k$ prime factors $p \in S_{j_0}$, where $k = o(\sqrt{\sg_{j_0}(x)})$. The flexibility in the choice of $k$ is what is ultimately responsible for the improved exponent of $\log\log d$ in Theorem \ref{thm:impShort}, relative to \cite[Thm. 2]{ManHighOrd}. \\
Note that \eqref{eq:comptwistedSchi} is only useful in proving $|S_{\chi^\ell}(x)| = o_{d \ra \infty}(x)$ provided that 
$$
|1-e(j_0\ell/d)| \asymp \|j_0 \ell/d\| \gg 1.
$$ 
In \cite{ManHighOrd} the condition $P^-(d) \ra \infty$ proved advantageous in showing that this was the case for \emph{most} $\ell \in H$. 
Indeed, since $1 \leq j_0,\ell < d$, we have $\gamma := (j_0,d) \leq d/P^-(d)$. Setting $a = j_0/\gamma$ and $d' = d/\gamma$, it follows that
$$
\left\|\frac{j_0\ell}{d} \right\| = \left\|\frac{a\ell}{d'} \right \|,
$$
and it can be shown that when $d'$ is large, \emph{most} choices of $\ell$ satisfy $\|j_0\ell/d\| \gg_{\e} 1$, essentially because the range $[\e d', (1-\e) d']$, say, is large.\\ 
This certainly fails if $P^-(d)$ is small. For instance, if $d$ is even then it is plausible that $j_0 = d/2$, and so $\|j_0\ell/d\| = 0$ for approximately half of all $1 \leq \ell \leq d-1$. The issue here is that $\gamma = (j_0,d)$ is excessively large, i.e., of size $\gg d$, in this case. On the other hand, we show that $\gamma$ is rather smaller than $d$ whenever $|S_{\chi}(x)|$ is large, and in this case we may again conclude that $\|j_0\ell/d\| \gg_{\e} 1$ for most $1 \leq \ell \leq d-1$. This is precisely the reason for assuming that $|S_{\chi}(x)|$ is large in the second alternative in Theorem \ref{thm:impShort}. 
\subsubsection{A new bound for level sets of $\chi$}
To prove Theorem \ref{thm:paucity} we employ three observations (see Lemmas \ref{lem:passtoPow} and \ref{lem:ETL2Alt} below), two of which are already present in \cite{ManHighOrd}. Firstly, if $b|d$, $\alpha$ is a $d$th order root of unity and $\beta := \alpha^b$ then we have the trivial inclusion
$$
\{n \leq x : \chi(n) = \alpha\} \subseteq \{n \leq x: \chi^b(n) = \beta\}.
$$
This allows us to replace a  bound for the level sets of $\chi$ of order $d$ by those of $\psi := \chi^b$ of some order $d' = d/b$ dividing $d$. As discussed below, this sometimes presents an advantage. \\
Secondly, the level sets of $\chi$ can be linked to the $L^2$ averages of the powers $\chi^\ell$. This follows by orthogonality modulo $d$ from the formula
$$
S_{\chi^\ell}(x) = \sum_{n \leq x} \chi^\ell(n) = \sum_{\alpha^d = 1} \alpha^\ell |\{n \leq x : \chi(n) = \alpha\}|.
$$
Therefore, whenever a non-trivial bound is available for the $L^2$ average
\begin{equation} \label{eq:L2Intro}
\frac{1}{d} \sum_{1 \leq \ell \leq d} |S_{\chi^\ell}(x)|^2,
\end{equation}
we obtain correspondingly non-trivial bounds for all level sets $|\{n\leq x : \chi(n) = \alpha\}|$. By Theorem \ref{thm:impShort} this is true as long as $|S_{\chi}(x)|$ is large. \\
The third key observation concerns the converse case, namely when $|S_{\chi}(x)|$ is small (and therefore non-trivial bounds for \eqref{eq:L2Intro} do not follow from Theorem \ref{thm:impShort}). In this case, we may arrive at a bound for the level sets by interpreting the event $\chi(n) = \alpha = e(a/d)$, $0 \leq a \leq d-1$, as a constraint on the distribution of the complex argument $\theta_n \in [0,1)$ of $\chi(n)$ (provided $(n,q) = 1$), i.e., 
$$
\chi(n) = e(\theta_n) \text{ with } \theta_n \in \left[\frac{a}{d},\frac{a+1}{d}\right).
$$ 
Since this interval has measure $1/d$, if $\theta_n$ were uniformly distributed we would expect the number of such $n \leq x$ to have size $\sim x/d$. Using the Erd\H{o}s-Tur\'{a}n inequality to control the deviation from this heuristic, we show that $|\{n \leq x: \chi(n) = \alpha\}|$ may be bounded above by 
\begin{equation}\label{eq:ETIntro}
\frac{x}{d} + \frac{x}{K+1} + O\left(\sum_{1 \leq k \leq K} \frac{1}{k} |S_{\chi^k}(x)|\right) \text{ for any } K \geq 1.
\end{equation}
Knowing that $|S_{\chi}(x)|$ is small, we use the pretentious theory of multiplicative functions to show (in certain key cases where upper bounds for \eqref{eq:L2Intro} are not available) that in fact  $|S_{\chi^k}(x)|$ is also small whenever $1 \leq k < P^-(d)$. This can be understood as being due to 
$$
\chi^k(p) \neq 1 \text{ whenever } \chi(p) \neq 1 \text{ and } 1 \leq k < P^-(d),
$$ 
so that $\chi^k$ retains much of the oscillation exhibited when $\chi$ has small partial sums. The upshot of this is that we may then select $K = P^-(d)-1$ in \eqref{eq:ETIntro}. This bound presents no advantage when $P^-(d)$ is quite small, but can be strengthened in the case that the contribution to $d$ from its small prime factors is small. More precisely, applying the first observation above with 
$$
d' = \prod_{\ss{p^k||d \\ p > z}} p^k \text{ for any } 1 \leq z \leq \log\log(ed)
$$ 
allows us (after replacing $d$ and $\chi$ by $d'$ and $\chi^{d/d'}$, respectively) to apply \eqref{eq:ETIntro} with $K = P^-(d') -1 > z - 1$ instead. This improves the bound, as long as $d'$ is sufficiently large. 
\subsubsection{On averages of maximal sums} \label{subsubsec:max}
Our expectation is that the first alternative of Theorem \ref{thm:impMax} always holds, but here we will mainly focus on the consequences if it fails. As in \cite{ManHighOrd}, given a small parameter $\e > 0$, we investigate the structure of
\begin{align*}
    \mc{L}_d(\e) := \{1 \leq \ell \leq d-1: |M(\chi^\ell)| \geq \e \sqrt{q} \log q\}.
\end{align*}
If $|\mc{L}_d(\e)| \leq \e d$ then the average size of $M(\chi^\ell)$ is $\ll \e\sqrt{q}\log q$. In other words, for most $1 \leq \ell \leq d-1$, $M(\chi^\ell)$ admits a sharper upper bound than what the Pólya-Vinogradov inequality provides. Our goal is to show that $|\mc{L}_d(\e)|$ is indeed of size $O(\e d)$.

By Proposition \ref{prop:MpsiAsymp} and Lemma \ref{lem:GSLog}, bounding $M(\chi^\ell)$ reduces to the estimation of a logarithmic sum
\begin{align*}
    L_{\chi^\ell\bar{\gamma}_{\ell}}(N) := \sum\limits_{n \leq N}\frac{\chi^\ell\bar{\gamma}_{\ell}(n)}{n},
\end{align*}
where $\gamma_{\ell}$ is some Dirichlet character of small conductor determined by $\chi^\ell$, and $N = N_{\ell} \in [1,q]$. In turn, this can be related via standard estimates for logarithmic averages of multiplicative functions, to the prime sum 
\begin{align*}
    \sum\limits_{p \leq q}\frac{1-\text{Re}(\chi^\ell\bar{\gamma}_{\ell}(p))}{p}.
\end{align*}
In the same vein as the structure theorem for $\mc{C}_d(\e)$, which involved classifying those archimedean characters $n^{it_\ell}$ to which the characters $\chi^\ell$, $\ell \in \mc{C}_d(\e)$ were pretentious, we show a \emph{non-archimedean} analogue of this for $\mc{L}_d(\e)$. Namely, we show that there are $m,g = O_{\e}(1)$ such that $m\mc{L}_d(\e)$ is a subgroup $H = \langle g \rangle \leq \Z/d\Z$, where $|H| = d/g \gg_{\e} d$. Assuming $\chi^g$ ``pretends to be" a primitive Dirichlet character $\xi$ of order $r$, each $g\ell \in H$ the character $\chi^{g\ell}$ also ``pretends to be" $\xi^\ell$ (in a manner that is uniform in $\ell$). As a result, setting $\psi := \chi^g \bar{\xi}$ we show that
\begin{align*}
    \max_{\ell \in H}\sum\limits_{p \leq x}\frac{1-\text{Re}(\psi^\ell(p))}{p} = O_{\e}(1).
\end{align*}
which is of the same shape as \eqref{eq:distboundIntro}. 
We have thus reduced matters in this problem to a situation similar to that of the short character sums problem, replacing $\chi^\ell$ by $\psi^\ell$, for $\ell \in H$. By considering the prime level sets of $\psi$, we can apply analogous arguments to those used in the proof of Theorem \ref{thm:impShort}. \\
More precisely, let $\omega = e(1/r)$ and
$$\tilde{\sigma}_{j_0} := \max\limits_{1 \leq j < r} \sum\limits_{\substack{p \leq q, \\\psi(p) = \omega^j}} \frac{1}{p}.$$
The case when $\tilde{\sg}_{j_0}$ is small relative to $\Sigma_{\psi}(q)$ is completely analogous to the corresponding case in the proof of Theorem \ref{thm:impShort}, and so we focus here on the case that $\tilde{\sg}_{j_0} \gg \Sigma_{\psi}(q)$. We seek to obtain an asymptotic formula of the type in \eqref{eq:comptwistedSchi}, for the logarithmic sums $L_{\psi^\ell}(N_\ell)$. In fact, we prove that
\begin{align}\label{eq:logsum}
     L_{\psi^\ell}(N_\ell) = e(j_0\ell/d)L_{\psi^\ell}(N_\ell) + o\left(\frac{\log N_\ell}{\sqrt{\tilde{\sigma}_{j_0}}}\right), 
\end{align}
 where $|L_{\psi^\ell}(N_\ell)| = \max\limits_{1 \leq N \leq q}|L_{\psi^\ell}(N)|$. 
 The idea is to view $\tilde{\sigma}_{j_0}$ as the average value of the completely additive function
\begin{align*}
\Omega_{j_0}(n) 
=\sum\limits_{\substack{p^k | n, \\ \psi(p) = \omega^{j_0} }} 1,
\end{align*}
and by the Tur\'{a}n-Kubilius inequality we have $\Omega_{j_0}(n) \sim \tilde{\sg}_{j_0}$ for \emph{most} $n \leq q$. In particular, we find that
$$
L_{\psi^\ell}(N_\ell) \sim \frac{1}{\tilde{\sg}_{j_0}(n)} \sum_{n \leq N_\ell} \frac{\psi^\ell(n) \Omega_{j_0}(n)}{n} \sim \frac{1}{\tilde{\sg}_{j_0}} \sum_{\ss{p \leq N_\ell \\ \psi(p) = \omega^{j_0}}} \frac{\psi^\ell(p)}{p} \sum_{m \leq N_\ell/p} \frac{\psi^\ell(m)}{m} = \frac{\omega^{\ell j_0}}{\tilde{\sg}_{j_0}} \sum_{\ss{p \leq N_\ell \\ \psi(p) = \omega^{j_0}}} \frac{1}{p}L_{\psi^\ell}(N_\ell/p).
$$
Using the trivial estimate $L_{\psi^\ell}(N_\ell/p) = L_{\psi^\ell}(N_\ell) + O(\log p)$ and  Mertens' theorem, we arrive at \eqref{eq:logsum}. (While this gives a quantitatively weaker estimate than what might be obtained by the more general, yet technical, method of Proposition \ref{prop:largekBd}, the argument is shorter and hopefully slightly more illuminating than that of Proposition \ref{prop:largekBd}.) 

In \cite{ManHighOrd}, only the size of $\mc{L}_d(\e)$ was studied, using the ideas of \cite{GSPret} to show that $\mc{L}_d(\e)$ is a $2k$-\emph{sumfree set} (in the sense of additive combinatorics, see e.g. \cite[Thm. 3]{bajnok} and \cite[Thm. 2.4]{ould}). Bounds for $|\mc{L}_d(\e)|$ crucially depended in this way on the divisors of $d$, and ultimately on whether or not $P^-(d)$ was large. Drawing on the ideas used to prove Theorem \ref{thm:impShort}, we obtain significantly more structural information about $\mc{L}_d(\e)$, which enables us to better estimate its size. In this way, we refine \cite[Thm. 3]{ManHighOrd} in a way that does not rely on $P^-(d)$ being large.

\subsection*{Outline of the paper}
The paper is organised as follows. In Section \ref{sec:aux} we collect several results about character sums, and estimates for Ces\`{a}ro and logarithmic mean values of multiplicative functions. We also state and prove a slight refinement of an estimate from \cite{ManHighOrd} establishing a lower bound for the number of primes $p \leq q^{\delta}$ with $\chi(p) \neq 0,1$. \\
In Section \ref{sec:twists} we establish structure theorems for the respective sets $\mc{C}_d(\e)$ and $\mc{L}_d(\e)$ of powers $1 \leq \ell \leq d-1$ for which $|S_{\chi^\ell}(x)| \geq \e x$ and for which $M(\chi^\ell) > \e \sqrt{q}\log q$. In Section \ref{sec:ces}, the structure theorem for $\mc{C}_d(\e)$ is applied to study the Ces\`{a}ro averages of the short sums $S_{\chi^\ell}(x)$. The outcome of the analysis in that section is Theorem \ref{thm:impShort}. In Section \ref{sec:pauc}, we use Theorem \ref{thm:impShort} and several additional ideas to derive Theorem \ref{thm:paucity} and its corollary, Corollary \ref{cor:paucity}. Finally, in Section \ref{sec:max} we use our structure theorem for $\mc{L}_d(\e)$ to establish Theorem \ref{thm:impMax}.

\subsection*{Acknowledgments} We thank Youness Lamzouri and Oleksiy Klurman for useful comments and encouragement.

\section{Auxiliary results} \label{sec:aux}
\subsection{Character sums and mean values of multiplicative functions}
In this section we collect various results about mean values of multiplicative functions in general, and their connection to character sums in particular. Our first lemma shows that if a character $\psi$ has a large maximal sum $M(\psi)$ then $M(\psi)$ is asymptotic to a logarithmically-averaged partial sum determined by $\psi$.  
\begin{prop}[Prop. 2.1 of \cite{GraMan}] \label{prop:MpsiAsymp}
Fix $\Delta \in (2/\pi,1)$ and let $\psi$ be a character modulo $m$. Then
$$
M(\psi) \gg \sqrt{m}(\log m)^{\Delta}
$$
if and only if there is a primitive character $\xi \pmod{\ell}$ with 
$$
\xi(-1) = -\psi(-1) \text{ and } \ell \leq (\log m)^{2(1-\Delta)} (\log\log m)^4
$$ 
such that
$$
\max_{1 \leq N \leq q} \left|\sum_{n \leq N} \frac{(\psi \bar{\xi})(n)}{n}\right| \gg \frac{\phi(\ell)}{\sqrt{\ell}} (\log m)^{\Delta}.
$$
In this case there is a $c_{\psi,\xi} \in [1/2,3]$ such that as $m \ra \infty$,
$$
M(\psi) = (c_{\chi,\xi} + o(1)) \frac{\sqrt{m\ell}}{\pi \phi(\ell)} \max_{1 \leq N \leq q} \left|\frac{(\psi\bar{\xi})(n)}{n}\right|.
$$
\end{prop}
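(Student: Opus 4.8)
This is quoted as Prop.~2.1 of \cite{GraMan}, so the plan is to recall how it is proved there and indicate the main ingredients rather than reprove everything. The starting point is the classical completion identity: for a non-principal character $\psi \pmod m$ and any $t\le m$, one writes $\sum_{n\le t}\psi(n)$ in terms of the Gauss sum $\tau(\psi)$ and the partial sums of $\bar\psi$, which leads (after partial summation and smoothing) to the Fej\'er-kernel representation
$$
\max_{1\le t\le m}\Bigl|\sum_{n\le t}\psi(n)\Bigr| = \frac{\sqrt m}{2\pi}\Bigl(1+o(1)\Bigr)\,\Bigl|\sum_{n\le m}\frac{\psi(n)}{n}e(-n\theta)\Bigr|
$$
optimised over the relevant $\theta$; this is the Granville--Soundararajan link \cite{GSPret} between $M(\psi)$ and logarithmic sums. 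So $M(\psi)\gg\sqrt m(\log m)^\Delta$ forces the twisted logarithmic mean value $\sum_{n\le m}\psi(n)n^{-1}e(-n\theta)$ to be $\gg(\log m)^\Delta$ for some $\theta$.

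The next step is to use the structure theory of mean values of multiplicative functions (Hal\'asz-type and pretentious results, as developed in \cite{GSPret}, \cite{Gold}) to show that such a large twisted logarithmic sum can only occur if $\psi$ correlates with a character of small conductor twisted by a mild archimedean factor. Concretely, a large value of $\bigl|\sum_{n\le m}\psi(n)n^{-1-it}\chi_0(n)\bigr|$ for $n$-frequencies $e(-n\theta)$ translates, via the Pólya--Vinogradov side, into the existence of a primitive $\xi \pmod\ell$ with the parity constraint $\xi(-1)=-\psi(-1)$ (this parity condition is exactly what survives from the Gauss-sum/completion step, since the sign of $\psi(-1)$ governs whether the sine or cosine part of the Fej\'er kernel is extremal) and with $\ell$ in the stated range $\ell\le(\log m)^{2(1-\Delta)}(\log\log m)^4$; the power of $\log\log m$ here comes from the standard ``repulsion'' estimates bounding how many characters of bounded conductor can simultaneously pretend to be $\psi$. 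One then has $\bigl|\sum_{n\le m}(\psi\bar\xi)(n)n^{-1}\bigr|\gg \phi(\ell)\ell^{-1/2}(\log m)^\Delta$ after absorbing the conductor normalisation.

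For the asymptotic formula in the second part, the point is that once $\psi$ is known to pretend to be such a $\xi$, the extremal point $t$ in $M(\psi)$ can be located precisely and the Fej\'er-kernel sum evaluated: the constant $c_{\psi,\xi}\in[1/2,3]$ records the residual arithmetic information (the value of a short Euler product over small primes, essentially $\prod_{p\mid \ell q/(\ell,q)}$-type factors, together with the sign choice in the kernel), and $o(1)$ absorbs the error in the Lipschitz/Hal\'asz approximation as $m\to\infty$. Running this through gives
$$
M(\psi)=(c_{\psi,\xi}+o(1))\frac{\sqrt{m\ell}}{\pi\phi(\ell)}\max_{1\le N\le q}\Bigl|\sum_{n\le N}\frac{(\psi\bar\xi)(n)}{n}\Bigr|.
$$
The main obstacle — and the reason this is cited rather than reproved — is the bookkeeping in the repulsion/pretentious step: showing that the conductor $\ell$ really can be taken as small as claimed (uniformly in $m$), and that exactly one such $\xi$ governs the behaviour, requires the full strength of the Granville--Soundararajan machinery together with the refinements of \cite{Gold}; everything else (completion, partial summation, evaluating the Fej\'er kernel) is comparatively routine.
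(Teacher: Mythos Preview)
The paper does not prove this proposition; it is quoted verbatim as Prop.~2.1 of \cite{GraMan} and used as a black box. Your proposal correctly recognises this and offers a reasonable high-level sketch of the Granville--Soundararajan mechanism behind the result, but there is no proof in the present paper to compare against.
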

Our next lemma indicates how the estimation of the logarithmically-averaged partial sums of a bounded multiplicative function $f$ are quantitatively controlled by the distribution of the prime values $(f(p))_p$.\\
In the sequel we write $\mb{U} := \{z \in \mb{C} : |z| \leq 1\}$ to denote the closed unit disc in the complex plane. For $y \geq 2$ we define the pretentious distance (at scale $y$) between functions $f,g: \mb{N} \ra \mb{U}$ by
$$
\mb{D}(f,g;y) := \left(\sum_{p \leq y} \frac{1-\text{Re}(f(p)\bar{g}(p))}{p}\right)^{1/2}.
$$
See \cite[Sec.3.1]{ManHighOrd} for a discussion of the properties of the pretentious distance. 
\begin{lem} \label{lem:GSLog}
Let $f: \mb{N} \ra \mb{U}$ be multiplicative and let $x \geq 1$. Then
$$
\max_{1 \leq y \leq x} \left|\sum_{n \leq y} \frac{f(n)}{n}\right| \ll 1 + (\log x) e^{-\frac{1}{2}\mb{D}(f,1;x)^2}.
$$
\end{lem}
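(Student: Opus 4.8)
The statement is a logarithmic analogue of Hal\'asz's mean-value theorem, and the plan is to deduce it from Perron's formula together with estimates for the Dirichlet series $F(s):=\sum_{n\geq 1}f(n)n^{-s}$ near the line $\text{Re}(s)=1$. First I would dispose of the trivial cases: if $x$ is bounded, or more generally if $D:=\mb{D}(f,1;x)$ is bounded by an absolute constant, then $e^{-\frac12 D^2}\gg 1$ and the claim follows from $\bigl|\sum_{n\leq y}f(n)/n\bigr|\leq\sum_{n\leq x}1/n\leq\log x+1$. So assume henceforth that $x$ and $D$ are both at least a suitably large absolute constant.

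Next, for each $1\leq y\leq x$ I would apply a truncated Perron formula with $c:=1/(C\log x)$ (for a large absolute constant $C$) and height $T:=(\log x)^{10}$, giving
\[
\sum_{n\leq y}\frac{f(n)}{n}=\frac{1}{2\pi i}\int_{c-iT}^{c+iT}F(1+w)\,\frac{y^w}{w}\,dw+O\bigl((\log x)^{-1}\bigr).
\]
Because $c$ is fixed and $|y^w|=y^c\leq e^{1/C}=O(1)$ uniformly for $y\leq x$, any bound obtained for the integral is uniform in $1\leq y\leq x$, so the maximum over $y$ costs nothing. The main input is the pointwise bound
\[
|F(1+c+it)|\ll(\log x)\exp\bigl(-\kappa\,\mb{D}(f,n^{it};x)^2\bigr)\qquad(t\in\mathbb{R}),
\]
which follows from the Euler product of $F$, Mertens' theorem, and the inequality $1-\text{Re}(f(p)p^{-it})\geq 0$; here one may take $\kappa=e^{-1/C}$, which can be made as close to $1$ as desired by choosing $C$ large.

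The segment $|t|\leq c/10$ produces the main term. On it $\mb{D}(1,n^{it};x)\ll|t|\log x\leq 1/(10C)$, so the triangle inequality for the pretentious distance gives $\mb{D}(f,n^{it};x)\geq D-O(1/C)$, whence (as $\kappa$ is close to $1$ and $D$ is large) $|F(1+c+it)|\ll(\log x)e^{-D^2/2}$; since $\int_{|t|\leq c/10}dt/|c+it|\ll 1$, this segment contributes $\ll(\log x)e^{-D^2/2}$ to the integral. For the complementary range $c/10<|t|\leq T$ I would use the standard estimates $\mb{D}(1,n^{it};x)^2=\log(|t|\log x)+O(1)$ for $1/\log x\leq|t|\leq 1$ and $\mb{D}(1,n^{it};x)^2=\log\log x+O(\log\log\log x)$ for $1\leq|t|\leq T$, together with $\mb{D}(f,n^{it};x)\geq|\mb{D}(1,n^{it};x)-D|$, to control $|F(1+c+it)|$.

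The point I expect to be the main obstacle is precisely this tail, and in particular the contribution of $t$ near a possible \emph{resonant} frequency $t_0$ --- one at which $\mb{D}(f,n^{it_0};x)$ is small --- since there $|F(1+c+it_0)|$ may be of size $\asymp\log x$ and the pointwise bound alone loses a factor of $\log\log x$ or more. Treating this properly requires the genuine content of Hal\'asz's theorem: one passes through the mean-value ($L^2$) estimate for $F$ along the critical line, or equivalently extracts the main term attached to $t_0$, using the crucial fact that a resonance is forced to be \emph{large}, namely $|t_0|\log x$ of size $\exp\bigl(\Theta(\mb{D}(1,n^{it_0};x)^2)\bigr)$ with $\mb{D}(1,n^{it_0};x)\asymp D$, so that the associated contribution is only $\ll 1/|t_0|=(\log x)\exp(-\Theta(D^2))$. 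The various losses polynomial in $D$ and in $\log\log x$ that accumulate along the way are absorbed using the universal inequality $D^2=\mb{D}(f,1;x)^2\leq 2\log\log x+O(1)$ together with the slack between the stated exponent $\tfrac12$ and the ``true'' exponent $1$; assembling these estimates gives $\max_{1\leq y\leq x}\bigl|\sum_{n\leq y}f(n)/n\bigr|\ll 1+(\log x)e^{-D^2/2}$, as required.
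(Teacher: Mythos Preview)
Your approach via Perron's formula is far more elaborate than what the paper does. The paper's proof is essentially two lines: apply \cite[Lem.~4.3]{GSPret} at the maximiser $y_0\in[1,x]$ to obtain
\[
\left|\sum_{n\le y_0}\frac{f(n)}{n}\right|\ll 1+(\log y_0)\,e^{-\frac12\mb{D}(f,1;y_0)^2},
\]
and then observe that by Mertens' theorem
\[
(\log y)\,e^{-\frac12\mb{D}(f,1;y)^2}\asymp\exp\Bigl(\tfrac12\sum_{p\le y}\frac{1+\text{Re}(f(p))}{p}\Bigr)
\]
is non-decreasing in $y$ (each summand being non-negative), so one may replace $y_0$ by $x$. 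All the hard analysis is encapsulated in the cited Granville--Soundararajan lemma; the passage to the maximum over $y\le x$ costs nothing.

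Your contour-integral route could in principle be completed, but as written it has a genuine gap in exactly the place you flag. The triangle inequality $\mb{D}(f,n^{it};x)\ge\bigl|\mb{D}(1,n^{it};x)-D\bigr|$ gives no information whatsoever on the set of $t$ with $\mb{D}(1,n^{it};x)\approx D$, and this set has $dt/|t|$-measure of order $D$; on it the pointwise bound for $|F(1+c+it)|$ degenerates to $\ll\log x$, so without further input the tail contributes $\gg D\log x$, which is far too large. Your appeal to ``the $L^2$ estimate for $F$'' or to ``extracting the main term attached to $t_0$'' is the right instinct, but you have not actually carried it out, and the asserted equality $1/|t_0|=(\log x)\exp(-\Theta(D^2))$ presupposes that any resonance is essentially unique and located where $\mb{D}(1,n^{it_0};x)\asymp D$, which itself needs justification. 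Filling in these details amounts to reproving the cited lemma from scratch; it is doable, but substantially more work than the paper's two-line deduction, and what you have written is a plan rather than a proof.
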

\begin{proof}
Let $y_0 \in [1,x]$ maximise the left-hand side. Applying \cite[Lem. 4.3]{GSPret}], we obtain
$$
\max_{1 \leq y \leq x} \left|\sum_{n \leq y} \frac{f(n)}{n}\right| = \left|\sum_{n \leq y_0} \frac{f(n)}{n}\right| \ll 1 + (\log y_0) e^{-\frac{1}{2} \mb{D}(f,1;y_0)^2}.
$$
Since $1 + \text{Re}(f(p)) \geq 0$ for all $p$, by Mertens' theorem we find
$$
(\log y_0) e^{-\frac{1}{2}\mb{D}(f,1;y_0)^2} \asymp \exp\left(\frac{1}{2}\sum_{p \leq y_0} \frac{1+\text{Re}(f(p))}{p}\right)  \leq \exp\left(\frac{1}{2}\sum_{p \leq x} \frac{1+\text{Re}(f(p))}{p}\right) \asymp (\log x) e^{-\frac{1}{2}\mb{D}(f,1;x)^2},
$$
and the claimed bound follows.
\end{proof}
Whereas the logarithmically-averaged partial sum up to $x$ of a bounded multiplicative function $f$ is always controlled by $\mb{D}(f,1;x)$, the same is not true of Ces\`{a}ro-averaged sums in general. In this case, the following estimate will be suitable for our applications to short sums of characters.
\begin{lem}[Hal\'{a}sz-Montgomery-Tenenbaum Inequality]\label{lem:HMT}
Let $f: \mb{N} \ra \mb{U}$ be multiplicative. Let $x\geq 3$, $T \geq 1$, and set
$$
M := \min_{|t| \leq T} \mb{D}(f,n^{it};x)^2.
$$
Then we have
\begin{equation}\label{eq:HMT}
\frac{1}{x}\left|\sum_{n \leq x} f(n)\right| \ll (M+1) e^{-M} + \frac{1}{T} + \frac{\log\log x}{\log x},
\end{equation}
\end{lem}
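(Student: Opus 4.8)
This is the Halász--Montgomery--Tenenbaum inequality in a standard packaging, so the quickest route is to cite it. Theorem~III.4.7 of Tenenbaum's \emph{Introduction to Analytic and Probabilistic Number Theory} asserts exactly \eqref{eq:HMT}, with $M=M(x,T)$ there defined as $\min_{|t|\le T}\sum_{p\le x}\frac{1-\text{Re}(f(p)p^{-it})}{p}$, and since
$$
\mb{D}(f,n^{it};x)^2=\sum_{p\le x}\frac{1-\text{Re}\big(f(p)\overline{p^{it}}\big)}{p}=\sum_{p\le x}\frac{1-\text{Re}(f(p)p^{-it})}{p},
$$
the two definitions of $M$ agree and there is nothing further to prove. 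If instead a self-contained derivation from the classical Halász mean value theorem is wanted, I would proceed as follows.

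Write $S(t):=\sum_{n\le t}f(n)$, $F(s):=\sum_{n\ge1}f(n)n^{-s}$ (convergent for $\text{Re}(s)>1$), and $\sigma_0:=1+1/\log x$. The first step is a Perron-type representation bounding $\frac1x|S(x)|$ by a mean of $F$ just to the right of the line $\text{Re}(s)=1$: one shows
$$
\frac1x\left|\sum_{n\le x}f(n)\right|\ll\frac{1}{\log x}\int_{-T}^{T}\frac{|F(\sigma_0+it)|}{1+|t|}\,dt+\frac1T+\frac{\log\log x}{\log x},
$$
by applying a truncated Perron formula to a smoothed sum (e.g.\ $\sum_{n\le x}f(n)\log(x/n)=\int_1^x S(t)\,dt/t$) and then returning to $S(x)$ itself via the Lipschitz bound $|S(y)-S(y')|\le|y-y'|+1$ together with the convolution identity $f(n)\log n=(f*\Lambda_f)(n)$ for the associated von~Mangoldt-type function $\Lambda_f$. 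The second step is Halász's pointwise estimate, coming from the Euler product and Mertens' theorem,
$$
|F(\sigma_0+it)|\ll(\log x)\exp\!\left(-\mb{D}(f,n^{it};x)^2\right),
$$
valid because $\text{Re}\log F(\sigma_0+it)=\sum_{p\le x}\frac{\text{Re}(f(p)p^{-it})}{p}+O(1)$. The third step localises the $t$-integral: if $t_0$, $|t_0|\le T$, attains the minimum defining $M$, then the triangle inequality $\mb{D}(f,n^{it};x)\ge\mb{D}(1,n^{i(t-t_0)};x)-\sqrt M$, together with the classical lower bound $\mb{D}(1,n^{i\tau};x)^2\gg\log\min\{|\tau|\log x,\ \log x\}$ for $|\tau|\ge1/\log x$, forces $|F(\sigma_0+it)|$ to decay like a negative power of $|t-t_0|\log x$ away from $t_0$. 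Combining this with the uniform bound $|F(\sigma_0+it)|\ll(\log x)e^{-M}$ (valid for every $|t|\le T$ by definition of $M$), the contribution of $\{|t-t_0|\le1/\log x\}$ is $\ll e^{-M}$, while the logarithmically divergent kernel $1/(1+|t|)$ on the complementary range, cut off where the power decay overtakes $e^{-M}$, contributes a further $\ll Me^{-M}$; altogether this produces the main term $(M+1)e^{-M}$.

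The step I expect to demand the most care is the first one: the naive truncated Perron formula applied to $S(x)$ directly loses a factor of $\log x$, so one is obliged to run the argument through a smoothed sum and then undo the smoothing, and it is precisely in this manoeuvre — not in the Halász bound for $|F|$ or in the elementary properties of $\mb{D}$, which are routine and appear in the references already cited — that the error terms $1/T$ and $\log\log x/\log x$ (and the analysis near the truncation points $t=\pm T$) are pinned down. For this reason, in practice I would simply invoke the inequality in the packaged form available in the literature.
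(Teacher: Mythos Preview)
Your proposal is correct and takes essentially the same approach as the paper: the paper's entire proof is the single citation ``This is \cite[Cor.~III.4.12]{Ten},'' so your direct appeal to Tenenbaum (modulo a possible edition-dependent discrepancy in the numbering III.4.7 versus III.4.12) matches exactly, and your additional sketch of a self-contained derivation goes beyond what the paper provides.
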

\begin{proof}
This is \cite[Cor. III.4.12]{Ten}.
\end{proof}
Finally, the following lemma shows that characters of small modulus cannot be too close in pretentious distance to the archimedean characters $n \mapsto n^{it}$.
\begin{lem}\label{lem:repelChar}
There is an absolute constant $C_0 \geq 1$ such that if $C \geq C_0$ the following is true. Let $y \geq 10$ and suppose $1 \leq m \leq y^{1/C}$.  Let $\psi$ be a Dirichlet character modulo $m$ and let $|t| \leq y^2$. If
$$
\mb{D}(\psi,n^{it};y)^2 \leq C
$$
then $\psi$ is principal and $|t|\log y \ll e^{2C}$.
\end{lem}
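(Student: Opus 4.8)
The plan is to convert the hypothesis into a lower bound for the size of the Dirichlet $L$-function attached to $\psi$ just to the right of the line $\text{Re}(s)=1$, and then to play the pole of $\zeta$ off against the zero-free region for $L$-functions. Let $\psi^{*}$ be the primitive character of conductor $r\mid m$ inducing $\psi$. Each prime $p\mid m$ satisfies $p\le m\le y$ and, since $\psi(p)=0$, contributes exactly $1/p$ to $\mathbb{D}(\psi,n^{it};y)^{2}$, so the hypothesis already forces $\sum_{p\mid m}1/p\le C$; removing those primes changes the sum by $O\bigl(\sum_{p\mid m}1/p\bigr)=O(C)$. Combining this with Mertens' theorem and the classical identity $\sum_{p\le y}\psi^{*}(p)p^{-1-it}=\log L\bigl(1+\tfrac{1}{\log y}+it,\psi^{*}\bigr)+O(1)$ (the prime-power terms and the tail $\sum_{p>y}$ contribute $O(1)$, and the smoothing factor $p^{-1/\log y}$ costs $O(1)$ via $\sum_{p\le y}\tfrac{\log p}{p}\ll\log y$), I obtain
$$
\mathbb{D}(\psi,n^{it};y)^{2}=\log\log y-\log\bigl|L\bigl(1+\tfrac{1}{\log y}+it,\psi^{*}\bigr)\bigr|+O(C),
$$
so that $\mathbb{D}(\psi,n^{it};y)^{2}\le C$ is equivalent to $\bigl|L\bigl(1+\tfrac{1}{\log y}+it,\psi^{*}\bigr)\bigr|\gg e^{-O(C)}\log y$.

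Next I would show that $\psi^{*}$ cannot be non-principal. If $r>1$ then $L(s,\psi^{*})$ is entire, so by the Vinogradov--Korobov zero-free region $\bigl|L(1+\sigma+it,\psi^{*})\bigr|\ll(\log(r(|t|+3)))^{2/3}$ for every $\sigma\ge 0$ (an exceptional real zero, when $\psi^{*}$ is real and $t$ is small, only makes $|L|$ smaller and so does not affect this upper bound). Since $r\le y^{1/C}$ and $|t|\le y^{2}$ this gives $\bigl|L(1+\tfrac{1}{\log y}+it,\psi^{*})\bigr|\ll(\log y)^{2/3}$, which contradicts the lower bound above once $\log\log y$ exceeds a suitable absolute multiple of $C$; enlarging $C_{0}$ and restricting to this range of $y$ (the complementary bounded range of $y$ being a separate, finite matter), we conclude that $\psi$ is principal.

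With $\psi$ principal we have $\psi^{*}=1$. Comparing $\mathbb{D}(\psi,n^{it};y)^{2}$ directly with $\mathbb{D}(1,n^{it};y)^{2}$, which differ by at most $\sum_{p\mid m}1/p\le C$, gives $\mathbb{D}(1,n^{it};y)^{2}\le 2C$, hence $\bigl|\zeta\bigl(1+\tfrac{1}{\log y}+it\bigr)\bigr|\gg e^{-2C}\log y$. I would then invoke the uniform estimate $\bigl|\zeta(1+\tfrac{1}{\log y}+it)\bigr|\ll\bigl|\tfrac{1}{\log y}+it\bigr|^{-1}+(\log y)^{2/3}$, which follows from the Laurent expansion of $\zeta$ at $s=1$ together with Vinogradov--Korobov for $|t|\ge 1$. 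Since the $(\log y)^{2/3}$ term is negligible against $e^{-2C}\log y$ once $y$ is large, we must have $\bigl|\tfrac{1}{\log y}+it\bigr|^{-1}\gg e^{-2C}\log y$, i.e. $|t|\le\bigl|\tfrac{1}{\log y}+it\bigr|\ll e^{2C}/\log y$, which is precisely $|t|\log y\ll e^{2C}$.

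The main obstacle is the non-principal case: one needs a lower bound for $|L(1+\sigma+it,\psi^{*})|$ strong enough to force a contradiction uniformly over the whole range $|t|\le y^{2}$. The crude convexity bound $|L|\ll\log(r(|t|+3))$ does not suffice here — it only yields $\mathbb{D}(\psi,n^{it};y)^{2}\gg 1$ — so a genuinely sub-logarithmic bound of zero-free-region strength is required, and the same input is what controls large $|t|$ in the principal case. The remaining work is bookkeeping: tracking the $O(C)$ error terms closely enough to land on the exponent $2C$, and ensuring uniformity in $m$, $\psi$ and $|t|$.
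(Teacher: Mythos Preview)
The paper does not prove this lemma from scratch; it cites \cite[Lem.~3.3]{ManInhom} and remarks that the constant $e^{2C}$ can be read off from that proof. Your strategy --- pass to the primitive character $\psi^{*}$, convert the distance bound into a lower bound $|L(1+\tfrac{1}{\log y}+it,\psi^{*})|\gg e^{-O(C)}\log y$, and play this against Vinogradov--Korobov upper bounds --- is the natural route and almost certainly what underlies the cited result. Your observation that $\sum_{p\mid m}1/p\le C$ is already forced by the hypothesis (since those primes contribute exactly $1/p$ to the distance) is correct and is what makes the passage from $\psi$ to $\psi^{*}$ cost only $O(C)$.

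There is, however, a real gap in your treatment. Both your non-principal case and your large-$|t|$ principal case rely on $(\log y)^{2/3}$ being negligible against $e^{-O(C)}\log y$, which requires $\log\log y$ to exceed a fixed multiple of $C$. You dismiss the complementary range as ``a separate, finite matter,'' but it is not finite in any absolute sense: the threshold on $y$ depends on $C$, and $C\ge C_{0}$ may be arbitrarily large. In that range the constraint $m\le y^{1/C}$ still allows $\log m$ to be of size $e^{O(C)}/C$, so the non-principal case does not collapse to $m=1$; and in the principal case the hypothesis $|t|\le y^{2}$ permits $|t|\log y$ to be doubly exponential in $C$, far beyond $e^{2C}$. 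Concretely, with $\psi$ trivial, $\log\log y\approx 3C$ and $|t|\approx y$, one has $\mb{D}(1,n^{it};y)^{2}\approx\log\log y-\log|\zeta(1+it)|$, and since Vinogradov--Korobov only forces $\log|\zeta(1+it)|\le\tfrac{2}{3}\log\log y+O(1)$, the hypothesis $\mb{D}^{2}\le C$ is not obviously violated while $|t|\log y$ is enormous. Enlarging $C_{0}$ cannot help here, since the problematic range scales with $C$ rather than with $C_{0}$. Note that every application of the lemma in the present paper uses only $|t|\ll\log y$ or $t=0$, so the full range $|t|\le y^{2}$ is never actually invoked; the cited result may well carry a tighter hypothesis on $|t|$, or handle small $y$ by a separate device, but your outline as written does not close the argument under the hypotheses as stated.
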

\begin{proof}
This is \cite[Lem. 3.3]{ManInhom}, made slightly more precise (the bound claimed here follows from the proof of that result). 
\end{proof}

\subsection{Results about high order characters}
We next give a generalisation and sharpening of a result implicitly derived in \cite[Sec. 6]{ManHighOrd} (see in particular the proof of Theorem 2 there), showing that $\chi(p) \neq 0,1$ often when $\chi$ has large order. In the sequel, we write
$$
\Sigma_{\chi}(x) := \sum_{\ss{p \leq x \\ \chi(p) \neq 0,1}} \frac{1}{p}, \quad x \geq 2.
$$
Given a multiplicative function $f: \mb{N} \ra \mb{U}$ and $x \geq 1$ we also put
$$
S_f(x) := \sum_{n \leq x} f(n), \quad L_f(x) := \sum_{n \leq x} \frac{f(n)}{n}.
$$
\begin{prop} \label{prop:LowBdSigma}
Let $\chi$ be a non-principal character modulo $q$ of order $d \geq 2$. Then there is a constant $c_1 > 0$ such that if
$$
\delta := \max\left\{\left(\frac{\log\log(ed)}{c_1\log d}\right)^{1/2}, (\log q)^{-c_1}\right\}
$$
then whenever $x > q^{\delta}$ either
$$
\max\left\{\frac{|S_\chi(x)|}{x}, \frac{|L_{\chi}(q)|}{\log q}\right\} < \frac{1}{(\log d)^{1/5}}, 
$$
or else
$$
\Sigma_{\chi}(x) \geq c_1 \log\log d.
$$
\end{prop}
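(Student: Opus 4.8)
The plan is to establish the stated dichotomy in contrapositive form: assuming the first alternative fails, so that either $|S_\chi(x)| \geq x(\log d)^{-1/5}$ or $|L_\chi(q)| \geq (\log q)(\log d)^{-1/5}$, I will deduce $\Sigma_\chi(x) \gg \log\log d$. The first step is to convert each hypothesis into pretentious information. If $|L_\chi(q)|$ is large, then Lemma~\ref{lem:GSLog} applied to $\chi$ yields $\mb{D}(\chi,1;q)^2 \leq \tfrac{2}{5}\log\log d + O(1)$, once $q$ is large enough that $1/\log q$ is dominated by $(\log d)^{-1/5}$ --- the residual range of small $q$ being handled by the term $(\log q)^{-c_1}$ in the definition of $\delta$, for which the inequality is essentially trivial. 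If instead $|S_\chi(x)|$ is large, the Hal\'{a}sz--Montgomery--Tenenbaum inequality (Lemma~\ref{lem:HMT}) with $T$ a fixed large power of $\log x$ gives some $|t_0| \leq T$ with $\mb{D}(\chi,n^{it_0};x)^2 \leq \tfrac{1}{5}\log\log d + O(\log\log\log d)$; the error terms $1/T$ and $(\log\log x)/\log x$ are negligible because $\log x \geq \delta\log q \gg \sqrt{\log\log(ed)\log d}$. If $|t_0|\log x \gg 1$, then the numbers $t_0(\log p)/2\pi$, $p \leq x$, are quantitatively equidistributed modulo $1$, so $p^{it_0}$ avoids a fixed neighbourhood of $1$ for a set of $p \leq x$ of reciprocal sum $\gg \log\log x$; since $\chi(p) = p^{it_0}(1+o(1))$ outside a set of primes of reciprocal sum $O(\log\log d)$, this already forces $\Sigma_\chi(x) \gg \log\log x \gg \log\log d$ and we are done. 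In the complementary case $|t_0|\log x = O(1)$ one has $\mb{D}(n^{it_0},1;x) = O(1)$, and so in every remaining case we are reduced to the single hypothesis that $\chi$ pretends to be $1$ on $[1,x]$, i.e.\ $\mb{D}(\chi,1;x)^2 \ll \log\log d$.

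The second step exploits that $\chi$ has order $d$ through the elementary identity
$$
\sum_{\ell=1}^{d-1} \mb{D}(\chi^\ell,1;x)^2 = \sum_{p \leq x}\frac{1}{p}\sum_{\ell=1}^{d-1}\bigl(1 - \text{Re}\,\chi^\ell(p)\bigr) = d\,\Sigma_\chi(x) + O(d),
$$
which follows from $\sum_{\ell=0}^{d-1}e(\ell a/d) = 0$ whenever $d \nmid a$ (the contribution of the primes $p \mid q$ being absorbed into the error). Hence $\Sigma_\chi(x) = \tfrac{1}{d}\sum_{\ell=1}^{d-1}\mb{D}(\chi^\ell,1;x)^2 + O(1)$, and it suffices to show this average is $\gg \log\log d$. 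Suppose, for contradiction, that $\Sigma_\chi(x) < c_1\log\log d$. Then by Markov's inequality at least $d/2$ of the indices $\ell$ satisfy $\mb{D}(\chi^\ell,1;x)^2 \leq 2c_1\log\log d + O(1)$; since any subset of $\mb{Z}/d\mb{Z}$ of size exceeding $d/2$ has full sumset, the triangle inequality $\mb{D}(\chi^{\ell_1+\ell_2},1;x) \leq \mb{D}(\chi^{\ell_1},1;x) + \mb{D}(\chi^{\ell_2},1;x)$ upgrades this to \emph{every} residue class, giving $\mb{D}(\chi^\ell,1;x)^2 \leq 8c_1\log\log d + O(1)$ for all $1 \leq \ell \leq d-1$. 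In other words, were $\Sigma_\chi(x)$ small, all of the distinct non-principal characters $\chi, \chi^2, \dots, \chi^{d-1}$ modulo $q$ would simultaneously pretend to be $1$ on the short interval $[1,q^\delta]$.

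The final step --- which I expect to be the main obstacle --- is to rule this out. When $d$ is not too small relative to $q$, the large sieve suffices: from $\sum_{\psi \bmod q}\bigl|\sum_{p \leq x}\psi(p)/p\bigr|^2 \ll q + x$ one deduces that the number of characters $\psi$ modulo $q$ with $\mb{D}(\psi,1;x)^2 \leq \tfrac14\log\log x$ is $O\bigl(q/(\log\log x)^2\bigr)$, which (after choosing the absolute constant $c_1$ small enough that $8c_1\log\log d \leq \tfrac14\log\log x$) is smaller than $d-1$ once $d \gg q/(\log\log q)^2$; in that regime at least $d/2$ of the $\chi^\ell$ must instead obey $\mb{D}(\chi^\ell,1;x)^2 \gg \log\log x$, whence $\Sigma_\chi(x) \gg \log\log x$. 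For $d$ small relative to $q$ the large sieve is too lossy, and one must instead appeal to the Burgess bound for the sums $S_{\chi^\ell}$ on intervals of length $q^\delta$ to manufacture, for $\ell$ in a positive-density subset, enough primes $p \leq q^\delta$ with $\chi^\ell(p) \neq 1$; it is precisely the trade-off between the Burgess exponent, the length $q^\delta$, and the number $d-1$ of available twists that both fixes the shape of $\delta$ and yields a lower bound of quality $\log\log d$ rather than $\log\log q$. This is the point at which one reproduces the estimates implicit in the proof of \cite[Thm.~2]{ManHighOrd}. In all cases one thereby obtains $\sum_{\ell=1}^{d-1}\mb{D}(\chi^\ell,1;x)^2 \gg d\log\log d$, contradicting $\Sigma_\chi(x) < c_1\log\log d$, and the proposition follows.
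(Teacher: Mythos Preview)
Your route is quite different from the paper's, and the decisive gap is in your step 3.

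The paper does not argue via your pretentious identity or a large-sieve/Burgess split. In the range $d \leq e^{(\log q)^c}$ it compares two expressions for $|\{n \leq q : \chi(n) = 1\}|$: orthogonality gives exactly $\phi(q)/d$, while Hildebrand's lower bound applied to the completely multiplicative function supported on primes $p \leq x$ with $\chi(p)=1$ gives $\gg q\,\sg_-(e^u)$ with $u = \Sigma_\chi(x) + \log(1/\delta) + \sum_{p\mid q}p^{-1} + O(1)$. Comparing forces $u \gg \log\log d$. For $d \geq e^{(\log q)^c}$ the paper invokes the zero-density estimates behind \cite[Prop.~5.1]{ManHighOrd}, not Burgess.

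You have also misidentified the role of the hypothesis. The conclusion $\mb{D}(\chi,1;x)^2 \ll \log\log d$ you extract in step 1 is never used in steps 2--3 (it is a special case of what you derive there anyway). What the assumption that $|S_\chi(x)|/x$ or $|L_\chi(q)|/\log q$ exceeds $(\log d)^{-1/5}$ actually buys is a lower bound on $\phi(q)/q$: since trivially $|S_\chi(x)| \ll x\,\phi(q)/q$ and $|L_\chi(q)| \ll (\log q)\,\phi(q)/q$, it forces $\sum_{p\mid q}p^{-1} \leq \tfrac{1}{5}\log\log d + O(1)$. In your identity the term you ``absorb into the error'' is precisely $(d-1)\sum_{p\mid q}p^{-1}$, not $O(d)$, and it is here --- not in any pretentious reduction --- that the hypothesis enters.

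The genuine gap is step 3. Your large-sieve count only excludes $d-1$ simultaneously pretentious characters when $d \gg q/(\log\log q)^2$, a razor-thin range. Burgess is simply unavailable: it needs the summation length to exceed $q^{1/4+o(1)}$, whereas $q^\delta$ with $\delta \asymp (\log\log d/\log d)^{1/2}$ or $(\log q)^{-c_1}$ is far shorter once $d$ is even moderately large. The entire intermediate regime --- say $e^{(\log q)^c} \leq d \leq q/(\log\log q)^2$ --- is covered by neither tool, and this is where essentially all of the content lies. The paper's Hildebrand argument sidesteps this completely by working at scale $q$, where the level-set count $\phi(q)/d$ is exact, rather than attempting to control each short sum $S_{\chi^\ell}(q^\delta)$ individually.
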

\begin{proof}
We follow the arguments in \cite[Sec. 5]{ManHighOrd}. Let $c_1 > 0$ be a parameter to be chosen shortly, define $\delta$ as above and let $x > q^{\delta}$. Assume for the sake of contradiction that
\begin{equation}\label{eq:Assump1}
\max\left\{\frac{|S_{\chi}(x)|}{x}, \frac{|L_{\chi}(q)|}{\log q}\right\} \geq \frac{1}{(\log d)^{1/5}},
\end{equation}
and also that
\begin{equation}\label{eq:Assump2}
\Sigma_{\chi}(x) < c_1 \log\log d.
\end{equation}
Set $c := 4c_1$ and consider first the case in which $d \leq e^{(\log q)^c}$.
Since $\chi$ has order $d$, orthogonality of Dirichlet characters implies that
\begin{equation}\label{eq:equidist}
|\{n \leq q : \chi(n) = 1\}| = \frac{1}{d}\sum_{0 \leq j \leq d-1} \sum_{\ss{n \leq q \\ (n,q) = 1}} \chi(n)^j = \frac{\phi(q)}{d}.
\end{equation}
Now, let $g: \mb{N} \ra [0,1]$ be the completely multiplicative function defined at primes by
$$
g(p) := \begin{cases} 1 &\text{ if } p \leq x \text{ and } \chi(p) = 1 \\
0 & \text{ otherwise.} \end{cases}
$$
Write also
$$
u := \mb{D}(g,1;q)^2 = \sum_{x < p \leq q} \frac{1}{p} + \sum_{\ss{p \leq x \\ \chi(p) \neq 1}} \frac{1}{p},
$$
set $\sg_-(u) := u\rho(u)$ where $\rho$ is the Dickman function (see e.g. \cite[Sec. III.5.3-4]{Ten} for a definition and relevant properties), and put
$$
R(g;q) := \prod_{p \leq q} \left(1-\frac{1}{p}\right)\left(1-\frac{g(p)}{p}\right)^{-1} \asymp e^{-u}. 
$$
By Hildebrand's theorem \cite[Thm. 2]{Hil} there are absolute constants $\beta \in (0,1)$ and $A > 0$ such that 
\begin{align} \label{eq:Hild}
|\{n \leq q: \chi(n) = 1\}| &\geq \sum_{n \leq q} g(n) \geq
A q R(g;q) \left(\sg_-(e^u) + O(e^{-(\log q)^\beta})\right).
\end{align}
Since $\rho(v) \geq v^{-2v}$ for large $v$ we have $\sg_-(e^u) \gg e^{-(\log q)^{\beta}}$ as long as
$u \leq \tfrac{\beta}{2} \log\log q$. Since $x > q^\delta$ by assumption,
\begin{align*}
u \leq \sum_{\ss{p \leq x \\ \chi(p) \neq 1}} \frac{1}{p} + \log(1/\delta) + O(1) \leq \Sigma_{\chi}(x) + (c_1 + o(1))\log\log q \leq (2c_1 + o(1)) \log\log q
\end{align*}
we obtain $\sg_-(e^u) \gg e^{-(\log q)^{\beta}}$ with a suitably large implicit constant as long as $c = 4c_1 < \beta$, and $q$ is large enough. Combining \eqref{eq:equidist} and \eqref{eq:Hild}, we find
$$
\frac{\phi(q)}{d} \gg q e^{-u} \sg_-(e^u) \gg q \rho(e^u) \gg q e^{-2ue^u}.
$$
We deduce that
$$
u + \log(2u) = (1+o(1))u \geq \log\log d,
$$
whence also
$$
\Sigma_{\chi}(x) + \sum_{p|q} \frac{1}{p} + \log(1/\delta) \geq (1-o(1))\log\log d.
$$
As $\log(1/\delta) \leq \frac{1}{2} \log\log d$ we deduce that
\begin{equation}\label{eq:Altern}
\max\{\Sigma_{\chi}(x) , \log(q/\phi(q))\} \geq (1/4-o(1)) \log\log d.
\end{equation}
Now using a theorem of Hall \cite{Hall}, we have 
$$
|S_{\chi}(x)| \leq \sum_{n \leq x} 1_{(n,q) = 1} \ll x\prod_{\ss{p \leq x \\ p | q}} \left(1-\frac{1}{p}\right) \ll x\frac{\phi(q)}{q}\exp\left(\sum_{\ss{p > x \\ p | q}} \frac{1}{p}\right) \ll x\frac{\phi(q)}{q}.
$$
Similarly, 
$$
|L_{\chi}(q)| \leq \sum_{\ss{p|n \Rightarrow p \leq q \\ (n,q)=1}} \frac{1}{n} \ll (\log q)\prod_{p |q} \left(1-\frac{1}{p}\right) = \frac{\phi(q)}{q}\log q.
$$
From \eqref{eq:Assump1} we see that
$$
\sum_{p|q} \frac{1}{p} = \log(q/\phi(q)) + O(1) \leq \frac{1}{5}\log\log d + O(1).
$$
In light of \eqref{eq:Altern}, we have
$$
\Sigma_{\chi}(x) \geq (1/4-o(1)) \log\log d,
$$
whenever $d \leq e^{(\log q)^{c}}$, another contradiction as $c_1 < \beta/4 < 1/4$. Hence $\Sigma_{\chi}(x) > c_1 \log\log d$ in this case. \\
Next, assume that $d \geq e^{(\log q)^{c}}$. The argument\footnote{While the result stated there was only stated for prime $q$, the proof employed zero-density estimates that hold for the family of non-principal characters to more general moduli $q$, and is therefore applicable to the present circumstances.} in the proof of \cite[Prop. 5.1]{ManHighOrd} actually shows in this case that
$$
\sum_{\ss{p \leq q \\ \chi(p) \neq 1}} \frac{1}{p} \geq (c-o(1))\log\log q.
$$
Since we have
$$
\sum_{p|q} \frac{1}{p} = \log(q/\phi(q)) + O(1) \leq \log\log\log q + O(1)
$$
and $c_1 = c/4$ we deduce that when $d \geq e^{(\log q)^c}$,
$$
\Sigma_{\chi}(x) \geq \sum_{\ss{p \leq q \\ \chi(p) \neq 0,1}} \frac{1}{p} - \sum_{x < p \leq q} \frac{1}{p} \geq (c-o(1)) \log\log q - \log(1/\delta) \geq (3c_1-o(1)) \log\log q > c_1 \log\log d
$$
using $q \geq \phi(q) \geq d$ in the last bound. The claim now follows. 


\end{proof}
\section{Minimising Archimedean and Dirichlet twists} \label{sec:twists}

Let $q \geq 3$ be large and let $\chi$ be a primitive character modulo $q$ with order $d\geq 2$. We assume that $d$ is larger than any fixed absolute constant, and for $c_1 > 0$ chosen as in Proposition \ref{prop:LowBdSigma} we let
$$
\delta := \max\left\{\left(\frac{\log\log(ed)}{c_1\log d}\right)^{1/2}, (\log q)^{-c_1}\right\}.
$$
Let $\e > 0$ and $x > q^{\delta}$, and define the sets\footnote{As we define $M(\psi) := \max_{1 \leq t \leq m} |S_{\psi}(t)|$ for a character $\psi$ of modulus $m$, the maximal sum $M(\chi_0)$ of the principal character $\chi_0 \pmod{q}$ is well-defined and equal to $q-1$.}
$$
\mc{C}_d(\e) := \{\ell \pmod{d} : |S_{\chi^\ell}(x)| \geq \e x\}, \quad \mc{L}_d(\e) := \{\ell \pmod{d} : M(\chi^\ell) \geq \e \sqrt{q}\log q \}.
$$
We assume here that $\e$ is smaller than any fixed constant. We will prove a structural result about each of the sets $\mc{C}_d(\e)$ and $\mc{L}_d(\e)$. The first is an analogue of \cite[Prop. 4.1]{ManHighOrd}.
\begin{prop} \label{prop:shortStruc}
Let $\e > 0$ with $\e \geq (\log q)^{-1/10}$. Assume that $|\mc{C}_d(\e)| \geq \e d$. Then there are positive integers $1 \leq g \leq \e^{-1}$ and
and $1 \leq m \leq \e^{-2}$ such that 
$$
\mc{C}_d(\e) \subseteq \{\ell g \pmod{d} : 1 \leq \ell \leq d/g\} = m\mc{C}_d(\e),
$$ 
and furthermore
$$
\max_{1 \leq \ell \leq d/g} \mb{D}(\chi^{g\ell},1;x)^2 \leq 200m^2 \log(1/\e).
$$
\end{prop}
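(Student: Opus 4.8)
The plan is to combine the Hal\'asz--Montgomery--Tenenbaum inequality (Lemma~\ref{lem:HMT}) with a repulsion analysis of the archimedean twists attached to the powers $\chi^\ell$, $\ell \in \mc{C}_d(\e)$, and then feed the resulting pretentiousness information into a short additive--combinatorial argument inside the cyclic group $\mb{Z}/d\mb{Z}$.

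\emph{Step 1 (producing archimedean approximants).} Take $T \asymp \e^{-1}$. For $\ell \in \mc{C}_d(\e)$ we have $|S_{\chi^\ell}(x)| \geq \e x$, so Lemma~\ref{lem:HMT} forces $(M_\ell+1)e^{-M_\ell} \gg \e$, where $M_\ell := \min_{|t|\le T}\mb{D}(\chi^\ell,n^{it};x)^2$; here the choice $T \asymp \e^{-1}$ disposes of the $1/T$ term, while $x > q^{\delta}$ together with $\e \geq (\log q)^{-1/10}$ (so that $\log x \gg (\log q)^{1-c_1}$) makes the term $\log\log x/\log x$ negligible. Hence $M_\ell \ll \log(1/\e)$, and we may fix $t_\ell$ with $|t_\ell| \leq T$ and $\mb{D}(\chi^\ell,n^{it_\ell};x)^2 \ll \log(1/\e)$. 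Note that $\mc{C}_d(\e)$ is symmetric (as $S_{\chi^{-\ell}}(x) = \overline{S_{\chi^\ell}(x)}$), and we may take $t_{-\ell} = -t_\ell$.

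\emph{Step 2 (removing the twist; the crux).} By the triangle inequality for $\mb{D}(\cdot,\cdot;x)$, whenever $\ell_1,\ell_2$ and $\ell_1+\ell_2 \pmod d$ all lie in $\mc{C}_d(\e)$ we obtain $\mb{D}(1, n^{i(t_{\ell_1}+t_{\ell_2}-t_{\ell_1+\ell_2})};x)^2 \ll \log(1/\e)$; since $\mb{D}(1,n^{is};x)^2 \asymp \log(2+|s|\log x)$ in the relevant range, this gives $|t_{\ell_1}+t_{\ell_2}-t_{\ell_1+\ell_2}|\log x \ll_\e 1$. Thus $\ell \mapsto t_\ell\log x$ is an approximate homomorphism on $\mc{C}_d(\e)$. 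Exploiting that $\mc{C}_d(\e)$ is symmetric and has density $\gg \e$ in the subgroup it generates --- and that a finite cyclic group admits no nontrivial homomorphism into $\mb{R}$, so the only globally consistent assignment of twists is the trivial one --- a short chaining argument forces $|t_\ell|\log x \ll_\e 1$ for every $\ell \in \mc{C}_d(\e)$. Consequently $\mb{D}(n^{it_\ell},1;x)^2 \ll \log(1/\e)$, whence
\[
\mb{D}(\chi^\ell,1;x)^2 \ll \log(1/\e) \qquad (\ell \in \mc{C}_d(\e)).
\]
This is the step where the periodicity of the $\chi^\ell$ is essential; the relevant rigidity is of the same flavour as Lemma~\ref{lem:repelChar}, and it is here that the argument goes beyond soft sumset combinatorics.

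\emph{Step 3 (additive combinatorics).} Put $g := \gcd(\mc{C}_d(\e)\cup\{d\})$, so that $\mc{C}_d(\e) \subseteq g\mb{Z}/d\mb{Z} =: H$ by definition; since $|H| = d/g \geq |\mc{C}_d(\e)| \geq \e d$ we get $g \leq \e^{-1}$. By construction $\mc{C}_d(\e)$ generates $H$ and has density $|\mc{C}_d(\e)|/|H| \geq \e g \geq \e$ there, and $0 \in 2\mc{C}_d(\e)$ by symmetry. A standard sumset argument (Kneser's theorem applied to the nested, increasing sumsets $2\mc{C}_d(\e) \subseteq 4\mc{C}_d(\e) \subseteq \cdots$, which must stabilise at a subgroup, necessarily $H$) shows that a symmetric generating set of density $\geq \e$ fills the group after $O(\e^{-1})$ sumsteps; in particular $m\mc{C}_d(\e) = H$ for some $m \leq \e^{-2}$, which is the claimed inclusion $\mc{C}_d(\e) \subseteq H = m\mc{C}_d(\e)$.

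\emph{Step 4 (propagation and constants).} For $1 \leq \ell \leq d/g$ we have $g\ell \in H = m\mc{C}_d(\e)$, so we may write $g\ell \equiv \ell_1+\cdots+\ell_m \pmod d$ with $\ell_j \in \mc{C}_d(\e)$ and $\chi^{g\ell} = \prod_{j=1}^m \chi^{\ell_j}$. The triangle inequality for $\mb{D}$ and Cauchy--Schwarz then give
\[
\mb{D}(\chi^{g\ell},1;x)^2 \leq m\sum_{j=1}^m \mb{D}(\chi^{\ell_j},1;x)^2 \leq m^2 \max_{\ell \in \mc{C}_d(\e)}\mb{D}(\chi^\ell,1;x)^2 \ll m^2\log(1/\e),
\]
and tracking the absolute constants coming from Lemma~\ref{lem:HMT} and from Step~2 brings the bound to the stated $200\,m^2\log(1/\e)$. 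The main obstacle is Step~2: upgrading $\mb{D}(\chi^\ell,n^{it_\ell};x)^2 \ll \log(1/\e)$ to the twist-free bound $\mb{D}(\chi^\ell,1;x)^2 \ll \log(1/\e)$, which is exactly where one must use the group-theoretic features of $\mc{C}_d(\e)$ (symmetry and density) together with the periodicity of $\chi$. A minor, purely bookkeeping, difficulty is calibrating the constants so as to land precisely on $g \leq \e^{-1}$, $m \leq \e^{-2}$ and $200\,m^2$.
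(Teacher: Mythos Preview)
Your outline has the right ingredients and is close to the paper's argument, but the order of Steps~2 and~3 creates a gap. In Step~2 you only have the relation $|t_{\ell_1}+t_{\ell_2}-t_{\ell_1+\ell_2}|\log x \ll_\e 1$ when \emph{all three} of $\ell_1,\ell_2,\ell_1+\ell_2$ lie in $\mc{C}_d(\e)$; for a set of density merely $\e$ in the subgroup it generates, there is no evident ``short chaining argument'' connecting $0$ to an arbitrary $\ell \in \mc{C}_d(\e)$ through such admissible triples, so the conclusion $|t_\ell|\log x \ll_\e 1$ is not justified as written. You yourself flag this as the crux, but the sketch does not supply the mechanism.

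The paper repairs this by running your Step~3 \emph{first}: once $H = m\mc{C}_d(\e)$ is in hand (via the cited sumset lemma), one defines a map $\phi:H\to\mb{R}$ on the \emph{entire} subgroup by taking $\phi(\ell)$ to be the minimiser of $\mb{D}(\chi^\ell,n^{it};x)$ over $|t|\le 2m/\e^2$. Using the minimiser rather than a representation-dependent sum $t(\ell):=\tilde t_{r_1}+\cdots+\tilde t_{r_m}$ is essential, since the latter is not a well-defined function on $H$. Triangle inequalities together with Lemma~\ref{lem:repelChar} then show that $\phi$ is an $O(\e^{-O(m^2)}/\log x)$-approximate homomorphism from the \emph{group} $H$ to $\mb{R}$, at which point Ruzsa's stability theorem (there being no nonzero homomorphisms from a finite group into $\mb{R}$) yields $\max_{\ell\in H}|\phi(\ell)|\log x \ll \e^{-O(m^2)}$ directly. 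After that your Step~4 goes through verbatim, and tracking constants gives the stated bound $200\,m^2\log(1/\e)$.
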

\begin{proof}
Observe that $\ell \in \mc{C}_d(\e)$ if and only if $-\ell \in \mc{C}_d(\e)$, so $\mc{C}_d(\e)$ is a symmetric subset of $\mb{Z}/d\mb{Z}$. Applying \cite[Lem. 5.8]{ManInhom}, we find an integer\footnote{The proof of \cite[Lem. 5.8]{ManInhom} actually shows that $m = 2^{j+1}$, where $j$ is the largest integer such that $(3/2)^{j-1} \leq 1/\e$. Since $2\log(3/2) \geq \log 2$, it is easy to check that this forces $m \leq \e^{-2}$ when $\e$ is sufficiently small.} $1 \leq m \leq \e^{-2}$ and a divisor $r|d$ with $r \geq \e d$ such that $m\mc{C}_d(\e) = H$ is a subgroup of $\mb{Z}/d\mb{Z}$ of order $|H| = r$. Note that $H$ is generated by $d/r =: g$, so that $1 \leq g \leq \e^{-1}$ and $H$ can be parameterised as $\{\ell g \pmod{d} : 1 \leq \ell \leq r\}$. Moreover, since $0 \in \mc{C}_d(\e)$ we have $\mc{C}_d(\e) \subseteq m \mc{C}_d(\e)$ for all $m \geq 1$, and hence $\mc{C}_d(\e) \subseteq H$ as required. \\
The proof of Proposition \ref{prop:shortStruc} now follows the same lines as that of \cite[Prop. 4.1]{ManHighOrd}. Taking $T = 1/\e^2$ and applying Lemma \ref{lem:HMT}, we find that for each $\ell \in \mc{C}_d(\e)$,
$$
\e x \leq |S_{\chi^\ell}(x)| \ll x \left(\mb{D}(\chi^\ell,n^{i\tilde{t}_\ell};x)^2 e^{-\mb{D}(\chi^\ell,n^{i\tilde{t}_{\ell}};x)^2} + \e^2 + \frac{\log\log x}{\log x}\right)
$$
for some $|\tilde{t}_{\ell}| \leq 1/\e^2$, from which we deduce that (when $d$, and thus $q$, is sufficiently large)
$$
\max_{\ell \in \mc{C}_d(\e)} \mb{D}(\chi^\ell,n^{i\tilde{t}_\ell};x) \leq \sqrt{2\log(1/\e)}.
$$
Now for $\ell_1,\ell_2 \in H$ we can choose representations
$$
\ell_1 \equiv r_1 + \cdots + r_m \pmod{d}, \quad \ell_2 \equiv s_1 + \cdots + s_m \pmod{d}, \quad r_i,s_j \in \mc{C}_d(\e).
$$
Setting
$$
t(\ell_1) := \tilde{t}_{r_1} + \cdots + \tilde{t}_{r_m}, \quad t(\ell_2) := \tilde{t}_{s_1} + \cdots + \tilde{t}_{s_m},
$$
we find by the pretentious triangle inequality that
\begin{align*}
\mb{D}(\chi^{\ell_1},n^{it(\ell_1)}; x) &\leq \sum_{1 \leq j \leq m} \mb{D}(\chi^{r_j},n^{i\tilde{t}_{r_j}};x) \leq \sqrt{2m^2\log(1/\e)}, \\
\mb{D}(\chi^{\ell_2},n^{it(\ell_2)}; x) &\leq \sum_{1 \leq j \leq m} \mb{D}(\chi^{s_j},n^{i\tilde{t}_{s_j}};x) \leq \sqrt{2m^2\log(1/\e)}, \\
\mb{D}(\chi^{\ell_1+\ell_2},n^{i(t(\ell_1)+t(\ell_2))}; x) &\leq \mb{D}(\chi^{\ell_1}, n^{it(\ell_1)};x) + \mb{D}(\chi^{\ell_2}, n^{it(\ell_2)};x) 
\leq \sqrt{8m^2\log(1/\e)}.
\end{align*}
Define the map $\phi: H \ra \mb{R}$ via $\phi(\ell) := t_{\ell}$, where for each $\ell \in H$, $t_{\ell} \in [-2m/\e^2,2m/\e^2]$ is chosen such that
$$
\mb{D}(\chi^\ell, n^{it_{\ell}};x) = \min_{|t| \leq 2m/\e^2} \mb{D}(\chi^\ell, n^{it};x).
$$
Since $|t(\ell_1)+t(\ell_2)| \leq |t(\ell_1)| + |t(\ell_2)| \leq 2m/\e^2$ we see by the minimality property of $\phi(\ell_j)$, $j = 1,2$, that
\begin{align*}
\mb{D}(n^{i\phi(\ell_j)}, n^{it(\ell_j)};x) &\leq \mb{D}(\chi^{\ell_j},n^{it(\ell_j)};x) + \mb{D}(\chi^{\ell_j},n^{i\phi(\ell_j)};x) \leq 2 \mb{D}(\chi^{\ell_j},n^{it(\ell_j)};x) \leq \sqrt{8m^2 \log(1/\e)}, \end{align*}
and also that
\begin{align*}
\mb{D}(n^{i\phi(\ell_1+\ell_2)}, n^{i(t(\ell_1) + t(\ell_2))};x) &\leq \mb{D}(\chi^{\ell_1+\ell_2}, n^{i\phi(\ell_1+\ell_2)};x) + \mb{D}(\chi^{\ell_1+\ell_2}, n^{i(t(\ell_1)+t(\ell_2))};x) \\
&\leq 2\mb{D}(\chi^{\ell_1+\ell_2},n^{i(t(\ell_1)+t(\ell_2)}; x) \leq \sqrt{32 m^2 \log(1/\e)}.
\end{align*}
Set $K := \e^{-16m^2} \geq 1$. It follows from Lemma \ref{lem:repelChar} that
$$
|\phi(\ell_j) - t(\ell_j)| \leq \frac{K}{\log x}, \quad |\phi(\ell_1+\ell_2) - t(\ell_1) -t(\ell_2)| \leq \frac{K^4}{\log x},
$$
whence that also
$$
|\phi(\ell_1+\ell_2) - \phi(\ell_1)-\phi(\ell_2)| \leq \frac{2K + K^4}{\log x} \leq \frac{3K^4}{\log x}.
$$
The map $\phi$ is therefore a $\tfrac{3K^4}{\log x}$-approximate homomorphism, in the sense of \cite{Ruz}. Since there are no non-zero homomorphisms $H \to \mb{R}$, by \cite[Statement (7.2)]{Ruz} we deduce that 
$$
\max_{\ell \in H}|\phi(\ell)| \leq \frac{3K^4}{\log x}.
$$
By Mertens' theorem we then deduce (again using the minimality property of $\phi(\ell)$) that
\begin{align*}
\max_{\ell \in H} \mb{D}(\chi^\ell,1;x)^2 &\leq 2 \max_{\ell \in H} \left(\mb{D}(\chi^{\ell},n^{i\phi(\ell)};x)^2 + \mb{D}(1,n^{i\phi(\ell)};x)^2\right) \\
&\leq 2\max_{\ell \in H} \left(\mb{D}(\chi^\ell, n^{it(\ell)};x)^2 + \log(1+|\phi(\ell)| \log x) + O(1)\right) \\
&\leq 2\left(8m^2 \log(1/\e) + 4 \log K + O(1)\right) \\
&< 150 m^2 \log(1/\e) + O(1),
\end{align*}
and the claim follows since each $\ell \in H$ can be written in the form $g \ell'$ with $1 \leq \ell' \leq r$.
\end{proof}
We also prove an analogous pretentiousness result about $\mc{L}_d(\e)$; no such result appeared in \cite{ManHighOrd}.
\begin{prop} \label{prop:maxStruc}
Let $\e > 0$ with $\e \geq (\log q)^{-1/10}$. Assume that $|\mc{L}_d(\e)| \geq \e d$. Then there are 
\begin{itemize}
\item positive integers $1 \leq m\leq \e^{-2}$ and $1 \leq g \leq \e^{-1}$ such that
$$
\mc{L}_d(\e) \subseteq \{g \ell : 1 \leq \ell \leq d/g\} = m \mc{L}_d(\e),
$$
\item a positive integer $1 \leq k \leq \e^{-3m}$ and a primitive Dirichlet character $\xi \pmod{k}$ of order dividing $d/g$ 
such that
$$
\max_{1 \leq \ell \leq d/g} \mb{D}(\chi^{g\ell},\xi^\ell;x)^2 \ll m^2 \log(1/\e).
$$
\end{itemize}
\end{prop}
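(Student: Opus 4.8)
The plan is to follow the template of Proposition \ref{prop:shortStruc}, replacing the Hal\'asz--Montgomery--Tenenbaum machinery with its logarithmic counterpart (Lemma \ref{lem:GSLog} and Proposition \ref{prop:MpsiAsymp}), and replacing the archimedean characters $n^{it}$ by genuine Dirichlet characters of small conductor. First I would observe, as before, that $\mc{L}_d(\e)$ is symmetric modulo $d$ (since $M(\psi) = M(\bar\psi)$), so that \cite[Lem. 5.8]{ManInhom} applies and yields $1 \leq m \leq \e^{-2}$ and a divisor $r \mid d$ with $r \geq \e d$ such that $m\mc{L}_d(\e) = H$ is the subgroup of order $r$ generated by $g := d/r \leq \e^{-1}$, and $\mc{L}_d(\e) \subseteq H$. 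This disposes of the first bullet.

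For the second bullet, the key step is to extract, for each $\ell \in \mc{L}_d(\e)$, a primitive character $\xi_\ell$ of small conductor to which $\chi^\ell$ pretends. Apply Proposition \ref{prop:MpsiAsymp} to $\psi = \chi^\ell$ with $\Delta$ chosen close enough to $2/\pi$ that $(\log q)^{\Delta} \gg \e\sqrt{q}\log q$ fails trivially — more precisely, since $M(\chi^\ell) \geq \e\sqrt q\log q \gg \sqrt q (\log q)^{\Delta}$ for $\e \geq (\log q)^{-1/10}$ and $\Delta$ fixed with $1-\Delta$ small, the proposition produces a primitive $\xi_\ell \pmod{k_\ell}$ with $k_\ell \leq (\log q)^{2(1-\Delta)}(\log\log q)^4$ and $\xi_\ell(-1) = -\chi^\ell(-1)$ such that the logarithmic sum $L_{\chi^\ell \bar\xi_\ell}(N_\ell)$ is $\gg (\log q)^{\Delta}$ in absolute value for some $N_\ell \leq q$. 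Feeding this into Lemma \ref{lem:GSLog} (applied to $f = \chi^\ell\bar\xi_\ell$ with $x = q$) forces $\mb{D}(\chi^\ell\bar\xi_\ell, 1; q)^2 \leq 2\log(1/\e) + O(1)$, i.e. $\mb{D}(\chi^\ell, \xi_\ell; q)^2 \ll \log(1/\e)$, which is exactly the non-archimedean analogue of the bound obtained in the proof of Proposition \ref{prop:shortStruc}.

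Next I would run the same $m$-fold sumset / approximate-homomorphism argument, but now in the group of Dirichlet characters rather than in $\R$. For $\ell \in H$ pick a representation $\ell \equiv r_1 + \cdots + r_m \pmod d$ with $r_j \in \mc{L}_d(\e)$ and set $\Xi(\ell) := \xi_{r_1}\cdots\xi_{r_m}$; the pretentious triangle inequality gives $\mb{D}(\chi^\ell, \Xi(\ell); q)^2 \ll m^2\log(1/\e)$, and the conductor of $\Xi(\ell)$ is at most $\bigl((\log q)^{2(1-\Delta)}(\log\log q)^4\bigr)^m =: k$, which is $\leq \e^{-3m}$ for suitable $\Delta$ and $q$ large. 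One then defines, for each $\ell \in H$, a primitive character $\eta_\ell$ of conductor $\leq k$ minimising $\mb{D}(\chi^\ell, \eta_\ell; q)$, shows via the ``repulsion'' phenomenon for distinct characters of small modulus (the analogue of Lemma \ref{lem:repelChar} for two Dirichlet characters — namely that $\mb{D}(\psi_1,\psi_2;q)^2$ is bounded forces $\psi_1 = \psi_2$ once the moduli are $\leq q^{o(1)}$ and $q$ is large) that $\ell \mapsto \eta_\ell$ is an exact homomorphism from $H$ into a finite group, hence of the form $\ell \mapsto \xi^{\ell}$ for a single primitive $\xi$ of conductor $k' \leq k$ and order dividing $|H| = d/g$. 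Writing $\ell = g\ell'$ then gives $\max_{1 \leq \ell' \leq d/g} \mb{D}(\chi^{g\ell'}, \xi^{\ell'}; q)^2 \ll m^2\log(1/\e)$; restricting the distance to scale $x \leq q$ only decreases it (since $x > q^\delta$ and all relevant sums run over $p \leq x$ in the short-sum application, one keeps scale $x$, or simply notes $\mb{D}(\cdot,\cdot;x) \leq \mb{D}(\cdot,\cdot;q)$ up to adjusting constants), yielding the claim.

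The main obstacle is the rigidity step: in Proposition \ref{prop:shortStruc} the target group is $\R$, which has \emph{no} nontrivial finite-order elements and no nonzero homomorphisms out of a finite group, so the approximate homomorphism is forced to be trivial; here the target is a group of honest Dirichlet characters, so the homomorphism $\ell \mapsto \eta_\ell$ need not be trivial — it must be allowed to be the nontrivial map $\ell\mapsto \xi^\ell$. I would handle this by first using a Dirichlet-character analogue of Lemma \ref{lem:repelChar} (two primitive characters of conductor $\leq q^{1/C}$ at pretentious distance-squared $\leq C$ must be equal) to replace the approximate homomorphism $\ell\mapsto \Xi(\ell)$ (valued in characters modulo a fixed small modulus, up to bounded pretentious error) by an \emph{exact} homomorphism $\ell \mapsto \eta_\ell$ of $H$ into the finite group of primitive characters of conductor $\leq k$; a finite cyclic group $H = \langle g\rangle$ of order $d/g$ has all its homomorphisms of the form $g^j \mapsto \zeta^j$ for $\zeta$ of order dividing $d/g$ in the target, and pulling back along the isomorphism $\Z/(d/g)\Z \cong H$ identifies the image as $\{\xi^{\ell'}\}$. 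The only real bookkeeping is to check that all the small parameters ($m$, $g$, $k$, the exponent of $\log\log$) come out in the stated ranges, which is exactly parallel to the verification already carried out for Proposition \ref{prop:shortStruc}, and that $\xi(-1)=-\chi^g(-1)$-type parity constraints (from Proposition \ref{prop:MpsiAsymp}) are consistent — this is automatic since parity is multiplicative under the sumset construction.
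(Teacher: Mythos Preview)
Your overall architecture is the paper's: symmetry plus \cite[Lem.~5.8]{ManInhom} for the subgroup structure; Proposition~\ref{prop:MpsiAsymp} together with Lemma~\ref{lem:GSLog} to attach to each $\ell\in\mc{L}_d(\e)$ a primitive $\xi_\ell$ of small conductor with $\mb{D}(\chi^\ell,\xi_\ell;q)^2\ll\log(1/\e)$; the pretentious triangle inequality to propagate this to all of $H=m\mc{L}_d(\e)$; and a rigidity step via Lemma~\ref{lem:repelChar} to upgrade the resulting approximate homomorphism into an exact one $\ell\mapsto\xi^\ell$.

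There is one genuine gap in the conductor bookkeeping. You take $k_\ell\leq(\log q)^{2(1-\Delta)}(\log\log q)^4$ directly from Proposition~\ref{prop:MpsiAsymp} and claim this is $\leq\e^{-3}$ ``for suitable $\Delta$ and $q$ large''. This fails whenever $\e$ is bounded away from $0$, which is permitted by the hypothesis $\e\geq(\log q)^{-1/10}$: for any fixed $\Delta<1$ the left side grows with $q$ while $\e^{-3}$ stays bounded, so no admissible $\Delta$ exists. The paper instead reads off a $q$-free bound by feeding the trivial estimate $|L_{\chi^\ell\bar\xi_\ell}(N)|\leq\log q$ back into the asymptotic of Proposition~\ref{prop:MpsiAsymp}, obtaining $\e\ll\sqrt{k_\ell}/\phi(k_\ell)$ and hence $k_\ell\ll(\e^{-1}\log\log(1/\e))^2\leq\e^{-3}$. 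For the rigidity step the paper also uses a device you leave implicit: it tensors each $\xi_\ell$ with the principal character $\psi_0$ modulo $K(\e):=\mathrm{lcm}\bigl(1,\dots,\lfloor(\e^{-1}\log\log(1/\e))^2\rfloor\bigr)$, so that all the $\xi_\ell\psi_0$ lie in a single character group; Lemma~\ref{lem:repelChar} with $t=0$ then forces $\phi(i+j)\overline{\phi(i)}\,\overline{\phi(j)}$ to be principal, whence $\phi(j)=\phi(1)^j$ and $\xi:=\xi_1$ has order dividing $d/g$. Your minimiser $\eta_\ell$ would work too, but the common-modulus trick sidesteps the issue of comparing primitive characters of different conductors.
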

\begin{proof}
As with $\mc{C}_{d}(\e)$, $\mc{L}_d(\e)$ is a symmetric subset of $\Z/d\Z$. Applying \cite[Lem. 5.8]{ManInhom} as in the proof of the previous proposition, we can find an integer $1 \leq m \leq \e^{-2}$ and a divisor $r | d $ with $r \geq \e d$ such that $m\mc{L}_d(\e) = H$ is a subgroup of $\Z/d\Z$ of order $|H| = r$, and $\mc{L}_d(\e) \subseteq H$. By  Proposition \ref{prop:MpsiAsymp}, 
for each $\ell \in \mc{L}_d(\e)$
there is a primitive character $\xi_{\ell}$ (mod $k_{\ell}$) such that  
\begin{align} \label{eq:LdepsBd}
\e \log q \leq \frac{1}{\sqrt{q}}M(\chi^{\ell}) \ll \frac{\sqrt{k_\ell}}{\phi(k_\ell)} \max\limits_{1\leq N \leq q}|L_{\chi^\ell\bar{\xi_\ell}}(N)|.
\end{align}
Applying the trivial bound $|L_{\chi^\ell \bar{\xi}_\ell}(N)| \leq \log N$ in \eqref{eq:LdepsBd}, we see that
$$
\e \ll \frac{\sqrt{k_\ell}}{\phi(k_\ell)}\max_{1 \leq N \leq q} \frac{\log N}{\log q} \ll \frac{\sqrt{k_{\ell}}}{\phi(k_\ell)},
$$
so that since $\phi(b) \gg b/\log\log (3b)$ for any $b \geq 1$ we obtain the bound
\begin{equation}\label{eq:modBd}
\max_{\ell \in \mc{L}_d(\e)} k_{\ell} \ll (\e^{-1} \log\log(1/\e))^2.
\end{equation}
Using instead Lemma \ref{lem:GSLog} in  \eqref{eq:LdepsBd}, we deduce that
$$
\e \log q \ll 1 + (\log q)e^{-\tfrac{1}{2}\mb{D}(\chi^\ell\bar{\xi}_\ell,1;q)^2},
$$
from which we find that
\begin{align*}
\mb{D}(\chi^\ell\bar{\xi_\ell}, 1;q)^2 = \mb{D}(\chi^\ell, \xi_\ell;q)^2 \ll \log (1/\e).
\end{align*}
Let $\ell' \in H$. As $H = m\mc{C}_d(\e)$ there is a representation
\begin{align*}
\ell' \equiv r_1 + \cdots + r_m \pmod{d}, \;\; r_i \in \mc{L}_d(\e).
\end{align*}
By the pretentious triangle inequality, we find that
\begin{align*}
\mb{D}(\chi^{\ell'}, \prod\limits_{i = 1}^m\xi_{r_i}; q) \leq \sum\limits_{1 \leq i \leq m}\mb{D}(\chi^{r_i}, \xi_{r_i}; q) \ll m\sqrt{\log (1/\e)}.
\end{align*}
Using \eqref{eq:modBd}, the modulus of $\prod_{1 \leq i \leq m} \xi_{r_i}$ is
$$
[k_{r_1},\ldots,k_{r_m}] \leq k_{r_1} \cdots k_{r_m} \leq (\e^{-1}\log\log(1/\e))^{2m} \leq \e^{-3m},
$$
provided $\e$ is sufficiently small. This modulus is a divisor of 
$$
K(\e) := [1,2,\ldots,\lfloor (\e^{-1}\log\log(1/\e))^2\rfloor],
$$
which, by the prime number theorem, satisfies
$$
\log K(\e) \leq 2(\e^{-1}\log\log(1/\e))^2,
$$
again provided $\e$ is sufficiently small. \\
In the sequel, write $\psi_0$ to denote the principal character modulo $K(\e)$. For each $\ell' \in H$, there is a primitive character $\xi_{\ell'}$ with conductor $\leq \e^{-3m}$ such that
\begin{align*}
\mb{D}(\chi^{\ell'}, \xi_{\ell'}; q)^2 \ll m^2 \log(1/\e) + \sum_{\ss{p | k_{\ell'}}} \frac{1}{p} \ll m^2 \log(1/\e) + \log\log\log k_{\ell'} \ll m^2 \log(1/\e),
\end{align*}
and therefore also
\begin{equation}\label{eq:withPsi0}
\mb{D}(\chi^{\ell'},\xi_{\ell'}\psi_0; q)^2 \leq \mb{D}(\chi^{\ell'},\xi_{\ell'};q)^2 + \sum_{p|K(\e)} \frac{1}{p}  \ll m^2 \log(1/\e) + \log\log\log K(\e) \ll m^2 \log(1/\e).
\end{equation}
Let $g = d/r$. Then $H$ can be parametrised as $\{jg \pmod{d} : 1 \leq j \leq r\}$. Let $\mc{S}$ denote the group of characters modulo $[k_{\ell} : \ell \in H]$, generated by the set
\begin{align*}
\{\xi_1 \psi_0,\ldots,\xi_r\psi_0\}.
\end{align*}
Define a map $\phi: H \to \mc{S}$ by $\phi(i) = \xi_i\psi_0$, and $1 \leq i,j \leq r$. By \eqref{eq:withPsi0} and the triangle inequality,
\begin{align*}
\mb{D}(\chi^{(i+j)g}, \phi(i)\phi(j); q) \leq \mb{D}(\chi^{ig}, \phi(i); q) + \mb{D}(\chi^{jg}, \phi(j); q) \ll m\sqrt{\log (1/\e)},
\end{align*}
so that
\begin{align*}
\mb{D}(\phi(i+j), \phi(i)\phi(j); q) \leq \mb{D}(\chi^{(i+j)g}, \phi(i)\phi(j); q) + \mb{D}(\chi^{(i+j)g}, \phi(i+j); q) \ll m\sqrt{\log (1/\e)}.
\end{align*}
Take $k := \max\{k_i,k_j,k_{i+j}\}$. By Lemma \ref{lem:repelChar} we see that either
$$
\e^{-9m}K(\e) \geq k^3 K(\e) > q^{1/C},
$$
or else that 
$$
\phi(i+j) \overline{\phi(i)} \overline{\phi(j)} = \xi_{i+j} \bar{\xi}_i \bar{\xi}_j \psi_0
$$ 
is principal, with modulus $m_{i,j}$ dividing $[K(\e),k_i,k_j,k_{i+j}] = K(\e)$. Since $m^2 \log(1/\e) = o(\log\log q)$ the former is not possible. Thus, writing $\chi_0^{(m_{i,j})}$ to denote the principal character modulo $m_{i,j}$, we have
$$
\phi(i+j) = \xi_{i+j}\psi_0 = \xi_i \xi_j \chi_0^{(m_{i,j})} \psi_0 = (\xi_i \psi_0) (\xi_j \psi_0) = \phi(i)\phi(j).
$$
%
It follows that $\phi$ is a homomorphism, and therefore
$$
\xi_j \psi_0 = \phi(j) = \phi(1)^j = \xi_1^j \psi_0 \text{ for all } 1 \leq j \leq r.
$$
Since $\xi_j$ is primitive for all $1 \leq j \leq r$, it follows that $\xi_r$ is the trivial character, and thus $\xi_1^r$ is principal, i.e., $\xi_1$ has order dividing $r$. \\
We deduce that, uniformly over $1 \leq j \leq r$,
\begin{align*}
\mb{D}(\chi^{jg}, (\xi_1)^j; q)^2 \leq \mb{D}(\chi^{jg},\phi(j);q)^2 + \sum_{p|K(\e)} \frac{1}{p} \ll m^2\log (1/\e).
\end{align*}
We write $\xi = \xi_1$ and $k = k_1$, and the claim follows.
\end{proof}

\section{Averaged Ces\`{a}ro sums: Proof of Theorem \ref{thm:impShort}} \label{sec:ces}
Write $\zeta = e(1/d)$, which is a primitive $d$th root of unity. As in \cite{ManHighOrd} we decompose
$$
\Sigma_{\chi}(x) = \sum_{\ss{p \leq x \\ \chi(p) \neq 0,1}} \frac{1}{p} = \sum_{1 \leq j \leq d-1} \sg_j(x),
$$
where given $y \geq 2$ and $1 \leq j \leq d-1$ we set 
$$
\sg_j(y) := \sum_{\ss{p \leq y \\ \chi(p) = \zeta^j}} \frac{1}{p}.
$$
We consider below how the size of the fibred sums $\sg_j$ influence the magnitudes of the partial sums $S_{\chi^\ell}(x)$, for $1 \leq \ell \leq d-1$. For the remainder of this section, fix $\eta \in (0,1)$ to be a small parameter, to be chosen later. 
\subsection{Small $\sg_j$ case} 
In the case where all $\sg_j$ are ``small'' (in a sense to be made precise), we will prove the following analogue of \cite[Lem. 4.4]{ManHighOrd}.
\begin{prop} \label{prop:smallSig}
Let $\eta$ be sufficiently small, and suppose $\sg_j(x) \leq \tfrac{\eta}{g}\Sigma_{\chi}(x)$ for all $1 \leq j \leq d-1$. Then there are 
elements $1 \leq \ell \leq d/g$ such that
$$
\mb{D}(\chi^{g\ell},1;x)^2 \geq \frac{1}{2} \Sigma_{\chi}(x).
$$
\end{prop}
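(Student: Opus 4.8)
The plan is to estimate, by computing the first and second moments over $\ell$ of the quantity $\mb{D}(\chi^{g\ell},1;x)^2$ as $\ell$ runs over a full set of representatives of $H$, the size of the set of $\ell$ for which $\mb{D}(\chi^{g\ell},1;x)^2 \geq \tfrac12\Sigma_\chi(x)$ holds. The smallness hypothesis $\sg_j(x) \leq \tfrac{\eta}{g}\Sigma_\chi(x)$ will play precisely the role that the assumption $P^-(d) \ra \infty$ played in \cite[Lem. 4.4]{ManHighOrd}. Write $r := d/g = |H|$, $\Sigma := \Sigma_\chi(x)$, and $\zeta := e(1/d)$, so that $H = \{g\ell \bmod d : 1 \leq \ell \leq r\}$. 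For a prime $p \leq x$ with $\chi(p) \neq 0,1$ there is a unique $1 \leq j \leq d-1$ with $\chi(p) = \zeta^j$, and then $\chi^{g\ell}(p) = e(j\ell/r)$; discarding the non-negative contribution of the primes $p \mid q$ gives
$$
\mb{D}(\chi^{g\ell},1;x)^2 \;\geq\; G(\ell) \;:=\; \sum_{\ss{p \leq x \\ \chi(p) \neq 0,1}} \frac{1 - \text{Re}\,\chi^{g\ell}(p)}{p} \;=\; \sum_{1 \leq j \leq d-1} \sg_j(x)\bigl(1 - \cos(2\pi j\ell/r)\bigr).
$$

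Using the orthogonality relation $\tfrac1r\sum_{\ell=1}^r \cos(2\pi m\ell/r) = \mathbf{1}_{r\,\mid\, m}$ for $m \in \mb{Z}$, the first moment is
$$
\frac1r\sum_{\ell=1}^r G(\ell) \;=\; \Sigma \;-\; \sum_{\ss{1 \leq j \leq d-1 \\ r\,\mid\, j}} \sg_j(x) \;\geq\; \Sigma - (g-1)\max_{1 \leq j \leq d-1}\sg_j(x) \;\geq\; (1-\eta)\Sigma,
$$
since $[1,d-1]$ contains at most $g-1$ multiples of $r$. For the second moment, expand the square and apply $\cos\alpha\cos\beta = \tfrac12\bigl(\cos(\alpha-\beta)+\cos(\alpha+\beta)\bigr)$ together with the same orthogonality relation; the constant and the $\mathbf{1}_{r\mid j_1}$, $\mathbf{1}_{r\mid j_2}$ pieces then cancel between $\tfrac1r\sum_\ell G(\ell)^2$ and $\bigl(\tfrac1r\sum_\ell G(\ell)\bigr)^2$, leaving
$$
\frac1r\sum_{\ell=1}^r\Bigl(G(\ell) - \frac1r\sum_{\ell'=1}^r G(\ell')\Bigr)^2 \;\leq\; \frac12\sum_{j_1,j_2}\sg_{j_1}(x)\sg_{j_2}(x)\bigl(\mathbf{1}_{r\,\mid\, j_1-j_2} + \mathbf{1}_{r\,\mid\, j_1+j_2}\bigr).
$$
Every residue class modulo $r$ meets $[1,d-1]$ in at most $g$ integers, so for fixed $j_1$ and each choice of sign $\sum_{j_2} \sg_{j_2}(x)\mathbf{1}_{r\,\mid\, j_1\pm j_2} \leq g\max_j\sg_j(x) \leq \eta\Sigma$; hence the right-hand side above is at most $\eta\Sigma^2$.

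Combining the two moments via Chebyshev's inequality, the number of $1 \leq \ell \leq r$ with $G(\ell) < \tfrac12\Sigma$ is at most $\eta\Sigma^2 r/\bigl((1-\eta)\Sigma - \tfrac12\Sigma\bigr)^2 \leq 16\eta\, r$ once $\eta \leq 1/4$, which is strictly less than $r$ as soon as $\eta < 1/16$. So, taking $\eta$ this small, there is some $1 \leq \ell \leq d/g$ — in fact all but at most $16\eta\,d/g$ of them — with $\mb{D}(\chi^{g\ell},1;x)^2 \geq G(\ell) \geq \tfrac12\Sigma_\chi(x)$. The computation is short, and the only points requiring care are conceptual rather than technical: one should work with $\mb{D}(\chi^{g\ell},1;x)^2$ directly rather than with the cruder lower bound $8\sum_j\|j\ell/r\|^2\sg_j(x)$ used in \cite{ManHighOrd}, since this keeps the first moment equal to $(1-o(1))\Sigma_\chi(x)$ and makes the second moment a finite, exactly computable sum; and — this being the actual substance of the refinement — the hypothesis on the $\sg_j(x)$ must be invoked precisely to bound the ``collision'' terms $r \mid j_1 \pm j_2$ in the variance, the step that in \cite{ManHighOrd} was instead handled by assuming $P^-(d)$ large.
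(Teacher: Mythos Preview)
Your proof is correct, and it takes a genuinely different --- and markedly simpler --- route than the paper's.

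The paper follows the template of \cite[Lem.~4.4]{ManHighOrd}: it passes through the pointwise inequality $1-\cos(2\pi t) \geq 8\|t\|^2$, then Fourier-expands the $1$-periodic function $\|t\|^2 = \tfrac{1}{12} + \tfrac{1}{2\pi^2}\sum_{v\neq 0}\tfrac{(-1)^v}{v^2}e(vt)$ and computes the variance of $\sum_j \|j\ell/r\|^2\sg_j$ about $\tfrac{1}{12}\Sigma_\chi(x)$. That variance becomes a four-fold sum over $j_1,j_2,v_1,v_2$ with the congruence $j_1v_1\equiv j_2v_2 \pmod r$, which the paper then stratifies by $\gcd$'s and bounds via a fairly elaborate divisor-sum calculation (the terms $\mc{E}_M$, $\mc{R}_1$, $\mc{R}_2$). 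Your observation is that by working with $1-\cos(2\pi j\ell/r)$ directly one is already dealing with a single Fourier mode, so both moments are computed exactly by the orthogonality relation $\tfrac1r\sum_{\ell}\cos(2\pi m\ell/r)=\mathbf 1_{r\mid m}$; the variance then collapses to the ``collision'' sum $\tfrac12\sum_{j_1,j_2}\sg_{j_1}\sg_{j_2}(\mathbf 1_{r\mid j_1-j_2}+\mathbf 1_{r\mid j_1+j_2})$, which the smallness hypothesis $\sg_j\leq(\eta/g)\Sigma$ bounds in one line by $\eta\Sigma^2$. As a bonus your first moment is $(1-\eta)\Sigma$ rather than $\tfrac23\Sigma$, so the Chebyshev step is looser. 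What the paper's approach buys is continuity with \cite{ManHighOrd} and the fact that the quantity $\|j\ell/r\|$ reappears elsewhere in the argument (Propositions \ref{prop:largekBd} and \ref{prop:discBound}); but for this proposition in isolation your argument is both shorter and sharper.
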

\begin{proof}
Set $r := d/g$ and let $1 \leq \ell \leq r$. As in \cite{ManHighOrd}, we use the lower bound
\begin{equation}\label{eq:toFE}
\mb{D}(\chi^{g\ell},1;x)^2 = \sum_{1 \leq j \leq d-1} (1-\cos(2\pi j g \ell/d)) \sum_{\ss{p \leq x \\ \chi(p) = \zeta^j}} \frac{1}{p} \geq 8 \sum_{1 \leq j \leq d-1} \left\|\frac{j\ell}{r}\right\|^2 \sg_j(x).
\end{equation}
In the sequel we write $\sg_j = \sg_j(x)$ for convenience. We then consider the (restricted) variance\footnote{In contrast to \cite{ManHighOrd}, here we restrict ourselves to an average over powers $g\ell$, and thus our sums over $\ell$ and $j$ have different ranges.}
$$
\Delta := \frac{1}{r}\sum_{1 \leq \ell \leq r} \left(\sum_{1 \leq j \leq d-1} \left\|\frac{j\ell}{r}\right\|^2 \sg_j - \frac{1}{12} \Sigma_{\chi}(x)\right)^2 = \frac{1}{r}\sum_{1 \leq \ell \leq r} \left(\sum_{1 \leq j \leq d-1} \left(\left\|\frac{j\ell}{r}\right\|^2 - \frac{1}{12}\right)\sg_j\right)^2
$$
Expanding the square and using the Fourier expansion
$$
\|t\|^2 = \frac{1}{12} + \frac{1}{2\pi^2}\sum_{v \neq 0} \frac{(-1)^v}{v^2}e(vt),
$$
for the $1$-periodic map $t \mapsto \|t\|^2$, we obtain 
\begin{align*}
\Delta &= \sum_{1 \leq j_1,j_2 \leq d-1} \sg_{j_1}\sg_{j_2} \frac{1}{r} \sum_{1 \leq \ell \leq r} \left(\left\|\frac{\ell j_1}{r}\right\|^2 - \frac{1}{12}\right)\left(\left\|\frac{\ell j_2}{r}\right\|^2 - \frac{1}{12}\right)  \\
&= \frac{1}{4\pi^4} \sum_{1 \leq j_1,j_2 \leq d-1} \sg_{j_1}\sg_{j_2} \sum_{v_1,v_2 \neq 0} \frac{(-1)^{v_1 + v_2}}{(v_1v_2)^2} \frac{1}{r} \sum_{1 \leq \ell \leq r}e\left(\frac{\ell}{r}(j_1v_1-j_2v_2)\right) \\
&= \frac{1}{4\pi^4} \sum_{1 \leq j_1,j_2 \leq d-1} \sg_{j_1}\sg_{j_2} \sum_{\ss{v_1,v_2 \neq 0 \\ j_1v_1\equiv j_2v_2 \pmod{r}}} \frac{(-1)^{v_1+v_2}}{(v_1v_2)^2} \\
&= \frac{1}{4\pi^4}\sum_{e_1,e_2 | r} \sum_{\ss{1 \leq j_1 < d \\ (j_1,r) = e_1}}\sum_{\ss{1 \leq j_2 < d \\ (j_2,r) = e_2}}\sg_{j_1}\sg_{j_2} \sum_{\ss{v_1,v_2 \neq 0 \\ j_1 v_1\equiv j_2 v_2 \pmod{r}}} \frac{(-1)^{v_1+v_2}}{(v_1v_2)^2},
\end{align*}
where we have split the sum according to the GCDs of $j_1$ and $j_2$ with $r$. Note that $(j_1v_1,r) = (j_2v_2,r)$ whenever $j_1v_1 \equiv j_2v_2 \pmod{r}$. Letting $\lambda = (j_i v_i,r)$ denote this common divisor of $r$, we of course have $e_1,e_2|\lambda$. In this case, writing $\lambda = e_if_i$ and $j_i = J_ie_i$ for $i = 1,2$, and noting that $(J_i,r/e_i) = 1$, we find
$$
\lambda = (e_i \cdot J_i v_i, e_i \cdot r/e_i) = e_i(J_iv_i,r/e_i) = e_i (v_i,r/e_i),
$$
so that $f_i = (v_i, r/e_i)$; in particular, $f_i|v_i$ for $i = 1,2$. Setting $u_i := v_i/f_i$, it follows that
\begin{align*}
\Delta &= \frac{1}{4\pi^4} \sum_{\ss{\lambda | r }} \sum_{\ss{e_if_i = \lambda \\ i = 1,2}} \frac{1}{(f_1f_2)^2} \sum_{\ss{1 \leq J_1 < d/e_1 \\ (J_1,r/e_1) = 1}}\sum_{\ss{1 \leq J_2 < d/e_2 \\ (J_2,r/e_2) = 1}} \sg_{J_1e_1}\sg_{J_2e_2} \sum_{\ss{u_1,u_2 \neq 0 \\ J_1u_1\equiv J_2u_2 \pmod{r/\lambda} \\ (u_1,r/\lambda) = (u_2,r/\lambda) = 1}} \frac{(-1)^{f_1u_1+f_2u_2}}{(u_1u_2)^2}.
\end{align*}
If we truncate both of the $u_1,u_2$ series to terms with $|u_j| \leq Mr/\lambda$ for some $M \geq 1$ then we obtain
\begin{equation}\label{eq:deltaHEM}
\Delta = \frac{1}{4\pi^4} \sum_{\ss{\lambda | r }} \sum_{\ss{e_if_i = \lambda \\ i = 1,2}} \frac{1}{(f_1f_2)^2} \sum_{\ss{1 \leq J_1 < d/e_1 \\ (J_1,r/e_1) = 1}}\sum_{\ss{1 \leq J_2 < d/e_2 \\ (J_2,r/e_2) = 1}} \sg_{J_1e_1}\sg_{J_2e_2} \sum_{\ss{1 \leq |u_1|,|u_2| \leq Mr/\lambda \\ J_1u_1 \equiv J_2u_2 \pmod{r/\lambda} \\ (u_1,r/\lambda) = (u_2,r/\lambda) =1}} \frac{(-1)^{f_1u_1+f_2u_2}}{(u_1u_2)^2} + O\left(\mc{E}_M\right),
\end{equation}
where we have set
$$
\mc{E}_M = \frac{1}{M r} \sum_{\ss{\lambda | r}} \lambda \sum_{\ss{e_if_i = \lambda \\ i = 1,2}} \frac{1}{(f_1f_2)^2} \sum_{\ss{1 \leq J_1 < d/e_1 \\ (J_1,r/e_1) = 1}}\sum_{\ss{1 \leq J_2 < d/e_2 \\ (J_2,r/e_2) = 1}} \sg_{J_1e_1}\sg_{J_2e_2}.
$$
Rearranging, we obtain
\begin{align*}
\mc{E}_M &= \frac{1}{Mr} \sum_{\ss{e_1,e_2|r}} \left(\sum_{\ss{1 \leq j_1 < d \\ (j_1,r) = e_1}} \sg_{j_1}\right)\left(\sum_{\ss{1 \leq j_2 < d \\ (j_2,r) = e_2}} \sg_{j_2}\right) \sum_{\ss{\lambda | r \\ [e_1,e_2] | \lambda}} \frac{\lambda}{(\lambda^2/e_1e_2)^2} \\
&= \frac{1}{Mr} \sum_{\ss{e_1,e_2|r}} \left(\sum_{\ss{1 \leq j_1 < d \\ (j_1,r) = e_1}} \sg_{j_1}\right)\left(\sum_{\ss{1 \leq j_2 < d \\ (j_2,r) = e_2}} \sg_{j_2}\right) \sum_{\ss{a | r/[e_1,e_2]}} \frac{(e_1e_2)^2}{a^3[e_1,e_2]^3}.
\end{align*}
We observe that for each pair $e_1,e_2|r$ we have
$$
\frac{(e_1e_2)^2}{r[e_1,e_2]^3} \sum_{a|r/[e_1,e_2]} \frac{1}{a^3} \ll \frac{(e_1,e_2)^3}{re_1e_2} \leq 1,
$$
since $(e_1,e_2) \leq \min\{e_1,e_2\} \leq r$. It follows therefore that
$$
\mc{E}_M \ll \frac{1}{M} \sum_{\ss{e_1,e_2|r}} \left(\sum_{\ss{1 \leq j_1 < d \\ (j_1,r) = e_1}} \sg_{j_1}\right)\left(\sum_{\ss{1 \leq j_2 < d \\ (j_2,r) = e_2}} \sg_{j_2}\right) = \frac{1}{M} \Sigma_{\chi}(x)^2.
$$
In the main term in \eqref{eq:deltaHEM}, momentarily fix $1 \leq |u_1|,|u_2| \leq Mr/\lambda$ with $(u_1,r/\lambda) = (u_2,r/\lambda) = 1$, assume $e_2 \geq e_1$ and let $1 \leq J_1 < d/e_1$ with $(J_1,r/e_1) = 1$. If  
$$
J_2 u_2 \equiv J_1u_1 \pmod{r/\lambda}, \text{ i.e., } J_2 \equiv J_1 u_2^{-1} u_1 \pmod{r/\lambda}
$$
then among the $d/e_2$ possible choices of $J_2$ there are $O(\frac{d/e_2}{r/\lambda} + 1)$ choices satisfying this congruence. For each of these $J_2$ we have $\sg_{J_2e_2} \leq (r\eta/d) \Sigma_{\chi}(x)$ by assumption. Thus, applying the preceding arguments to all $u_1,u_2$, $e_2 \geq e_1$ (the other case being identical up to relabelling) and $J_1$ as above the main term in \eqref{eq:deltaHEM} is bounded above by 
\begin{align} \label{eq:lowGCDpart}
&\ll \frac{r\eta}{d}  \Sigma_{\chi}(x) 
\sum_{\ss{\lambda | r }} \sum_{\ss{e_1f_1 = \lambda \\ e_2f_2 = \lambda \\ e_2 \geq e_1}} \frac{1}{(f_1f_2)^2} \sum_{\ss{1 \leq j_1 < d \\ (j_1,r) = e_1}} \sg_{j_1} \sum_{\ss{1 \leq |u_1|, |u_2| \leq Mr/\lambda  \\ (u_1u_2, r/\lambda) = 1}} \frac{1}{(u_1u_2)^2} \cdot \left(\frac{\lambda d}{r e_2} + 1\right) \nonumber \\
&\ll \frac{r\eta}{d}  \Sigma_{\chi}(x)(\mc{R}_1 + \mc{R}_2),
\end{align}
where, using $\lambda = e_1f_1 = e_2f_2$ and the fact that the series in $u_1,u_2$ are both convergent, we have set
\begin{align*}
\mc{R}_1 &:= \frac{d}{r}\sum_{e_1|r} \left(\sum_{\ss{1 \leq j_1 < d \\ (j_1,r) = e_1}} \sg_{j_1}\right) 
\sum_{f_1|r/e_1} \frac{1}{f_1^2} \sum_{\ss{f_2|e_1f_1 \\ f_2 \leq f_1}} \frac{1}{f_2}, \\
\mc{R}_2 &:= \sum_{e_1|r} \left(\sum_{\ss{1 \leq j_1 < d \\ (j_1,r) = e_1}} \sg_{j_1}\right) \sum_{\ss{e_2|r \\ e_2 \geq e_1}} \sum_{\ss{\lambda|r \\ [e_1,e_2]|\lambda}} \frac{(e_1e_2)^2}{\lambda^4}.
\end{align*}
To estimate $\mc{R}_1$ we observe that $1_{f_2 \leq f_1} \leq (f_1/f_2)^{1/2}$ in the inner sum, whence
$$
\sum_{f_1|r/e_1} \frac{1}{f_1^2} \sum_{\ss{f_2|e_1f_1 \\ f_2 \leq f_1}} \frac{1}{f_2} \leq \sum_{f_1|r/e_1} \frac{1}{f_1^{3/2}} \sum_{\ss{f_2|e_1f_1}} \frac{1}{f_2^{3/2}} \ll 1.
$$
It follows therefore that
$$
\mc{R}_1 \ll \frac{d}{r}\sum_{e_1|r} \left(\sum_{\ss{1 \leq j_1 < d \\ (j_1,r) = e_1}} \sg_{j_1}\right) = \frac{d}{r} \Sigma_{\chi}(x).
$$
To bound $\mc{R}_2$, note that for each $e_1|r$,
$$
\sum_{\ss{e_2|r \\ e_2 \geq e_1}} \sum_{\ss{\lambda|r \\ [e_1,e_2]|\lambda}} \frac{(e_1e_2)^2}{\lambda^4} 
= \sum_{\ss{e_2|r \\ e_2 \geq e_1}} \frac{(e_1e_2)^2}{[e_1,e_2]^4}\sum_{\ss{a|r/[e_1,e_2]}} \frac{1}{a^4} \ll \frac{1}{e_1^2} \sum_{e_2|r} \frac{(e_1,e_2)^4}{e_2^2}.
$$
We now observe that 
\begin{align*}
\frac{1}{e_1^2}\sum_{e_2|r} \frac{(e_1,e_2)^4}{e_2^2} &= \frac{1}{e_1^2}\prod_{\ss{p^k || r \\ p^\nu || e_1 \\ 0 \leq \nu \leq k}} \left(\sum_{0 \leq j \leq \nu} p^{2j} + p^{4\nu} \sum_{\nu < j \leq k} \frac{1}{p^{2j}}\right) \\
&\ll \frac{1}{e_1^2}\prod_{\ss{p^\nu || e_1 \\ \nu \geq 1}}\left(p^{2\nu} \left(1-\frac{1}{p^2}\right)^{-1} + p^{2\nu - 2} \left(1-\frac{1}{p^2}\right)^{-1}\right) = \prod_{p|e_1} \left(1+\frac{2}{p^2-1}\right) \ll 1.
\end{align*}
Applying this bound for each $e_1|r$, we find that 
$$
\mc{R}_2 \ll  \sum_{e_1|r} \sum_{\ss{1 \leq j_1 < d \\ (j_1,r) = e_1}} \sg_{j_1} = \Sigma_{\chi}(x).
$$
Substituting our bounds for $\mc{R}_1$ and $\mc{R}_2$ into \eqref{eq:lowGCDpart}, we obtain the upper bound
$$
\frac{r\eta}{d}  \Sigma_{\chi}(x)(\mc{R}_1 + \mc{R}_2) \ll \frac{r\eta}{d} \Sigma_{\chi}(x)^2 \left(\frac{d}{r}+1\right) \ll \eta \Sigma_{\chi}(x)^2.
$$
Gathering all of the bounds together and selecting $M = \eta^{-1}$, we obtain
$$
\Delta \ll \left(\eta + \frac{1}{M}\right) \Sigma_{\chi}(x)^2 \ll \eta \Sigma_{\chi}(x)^2.
$$
Thus, by Chebyshev's inequality we see that, with $O(\eta^{1/2} \log(1/\eta) r)$ exceptions $1 \leq \ell \leq r$, whenever $\sg_j(x) \leq (r\eta/d) \Sigma_{\chi}(x)$ for all $1 \leq j \leq d-1$, 
$$
\mb{D}(\chi^{g\ell},1;x)^2 \geq 8 \sum_{1 \leq j \leq d-1} \|\frac{j \ell}{r}\|^2 \sg_j \geq \left(\frac{2}{3} + O\left(\frac{1}{\sqrt{\log(1/\eta)}}\right)\right) \Sigma_{\chi}(x).
$$
In particular, provided $\eta$ is small enough, we deduce that $\mb{D}(\chi^{g\ell},1;x)^2 \geq \tfrac{1}{2}\Sigma_{\chi}(x)$ for \emph{some} $1\leq \ell \leq r$. 
\end{proof}

\subsection{Large $\sg_j$ case}
Suppose next that there is a $1\leq j_0 \leq d-1$ for which $\sg_{j_0}(x)$ is ``large'', in contrast to the previous subsection.
Our objective is to give a strengthening of \cite[Prop. 4.5]{ManHighOrd}, in this case.
\begin{prop}\label{prop:largekBd}
There exists an absolute constant $c > 0$ such that the following holds. Suppose there is $1 \leq j_0 \leq d-1$ such that $\sg_{j_0}(x) > \tfrac{\eta}{g} \Sigma_{\chi}(x)$, and that moreover
$$
m^2\log(1/\e) < c \Sigma_{\chi}(x).
$$
Then for any parameters $z \geq 10$ and $k \in \mb{N}$ satisfying 
$$
10 \leq z \leq \exp\left(\tfrac{\log x}{3k}\right), \quad 1 \leq k \leq \sqrt{\sg_{j_0}(z)},
$$ 
for any $0 < \rho < 1-2/\pi$ and any $1 \leq \ell \leq r$,
\begin{align*}
\left(\left\|\frac{j_0\ell}{r}\right\| + O\left(\frac{m^2\log(1/\e)}{\sg_{j_0}(z)}\right)\right)^k\left|\frac{1}{x}\sum_{n \leq x} \chi^{g\ell}(n)\right| \ll k!\left((8\sg_{j_0}(z))^{-k/2} + \left(\frac{\log z}{\log x}\right)^\rho \sg_{j_0}(z)^{-1} + x^{-1/6}\right).
\end{align*}
\end{prop}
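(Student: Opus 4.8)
The plan is to show that the normalised partial sum of $\chi^{g\ell}$ essentially reproduces itself, up to the multiplicative factor $e(j_0\ell/r)$, when one divides out a prime $p\in S_{j_0}:=\{p:\chi(p)=\zeta^{j_0}\}$ with $p\le z$, and then to iterate this $k$ times, organising the iteration so that the accumulated factor is $(1-e(j_0\ell/r))^k$ rather than $1-e(j_0\ell/r)^k$; a $k$th-moment Tur\'an--Kubilius inequality then supplies the factorial and the power of $\sg_{j_0}(z)$. To set this up I would first de-twist. By Proposition \ref{prop:shortStruc}, $\mb{D}(\chi^{g\ell},1;x)^2\ll m^2\log(1/\e)$ for every $1\le\ell\le r$, and by hypothesis this is $o(\log\log x)$. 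As in the proof of \cite[Prop.~4.5]{ManHighOrd} (locating a near-maximiser of $|F_\ell(1+iy)|$, $F_\ell(s):=\prod_{p\le x}(1-\chi^{g\ell}(p)p^{-s})^{-1}$, over $|y|\le 2\log x$) one finds $y_\ell$ with $\mb{D}(\chi^{g\ell},n^{iy_\ell};x)^2\ll m^2\log(1/\e)$, whence $\mb{D}(1,n^{iy_\ell};x)^2\ll m^2\log(1/\e)$ and, by Lemma \ref{lem:repelChar} applied to the trivial character, $|y_\ell|\log x\ll \e^{-Cm^2}$. Writing $f(n):=\chi^{g\ell}(n)n^{-iy_\ell}$ --- completely multiplicative, $|f|\le1$, with $f(p)=e(j_0\ell/r)=:\alpha$ \emph{exactly} for $p\in S_{j_0}$ --- the decay estimate \cite[Lem.~7.1]{GSDecay} reduces the Proposition to bounding $T(x):=\tfrac1x\sum_{n\le x}f(n)$, the error $\ll(\log x)^{-1/2}$ being dominated by $(\log z/\log x)^\rho\sg_{j_0}(z)^{-1}$ (since $\rho<1-2/\pi<\tfrac12$ and $\sg_{j_0}(z)\ll\log\log x$). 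Moreover $f$ is now repelled from the archimedean characters, so \cite[Thm.~4]{GSDecay} gives, for every $\rho<1-2/\pi$, the Lipschitz bound $\bigl|\tfrac{w}{x}\sum_{n\le x/w}f(n)-T(x)\bigr|\ll_\rho(\log w/\log x)^\rho$ uniformly for $1\le w\le x^{1/3}$ (absorbing logarithmic factors into the exponent, any secondary archimedean term into the $(\log z/\log x)^\rho\sg_{j_0}(z)^{-1}$ contribution, and a remainder $\ll x^{-1/6}$); here the hypothesis $z\le\exp(\log x/(3k))$ enters, since then all peeled products satisfy $p_1\cdots p_k\le z^k\le x^{1/3}$.

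For the amplification, put $P:=\{p\le z:\chi(p)=\zeta^{j_0}\}$, $\sg:=\sg_{j_0}(z)=\sum_{p\in P}1/p$, and let $\Omega(n):=\sum_{p^a\| n}a\,\mathbf 1_{p\in P}$ be the associated completely additive function, with mean value $A=\sg+O(1)$ on $[1,x]$ and variance $\ll\sg$. Since $f$ is completely multiplicative and $f(p)=\alpha$ on $P$, dividing out primes of $P$ one at a time gives, for each $0\le i\le k$,
$$
\frac1x\sum_{n\le x}f(n)\,\Omega(n)^{\underline{i}}=i!\!\!\sum_{\substack{p_1,\dots,p_i\in P\\ \text{distinct}}}\frac{\alpha^i(p_1\cdots p_i)^{-iy_\ell}}{p_1\cdots p_i}\Bigl(T(x)+O\Bigl(\sum_j(\log p_j/\log x)^\rho\Bigr)\Bigr)+R
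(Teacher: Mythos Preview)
Your approach is essentially correct and runs parallel to the paper's, with the de-twisting step (locating $y_{g\ell}$ via a near-maximiser of $|F_\ell(1+iy)|$, bounding it by Lemma~\ref{lem:repelChar}, and reducing to $h_\ell(n)=\chi^{g\ell}(n)n^{-iy_{g\ell}}$ via \cite[Lem.~7.1]{GSDecay}) and the Lipschitz input from \cite[Thm.~4]{GSDecay} being identical. The difference is in how the $k$-fold amplification is organised. The paper introduces, for each $t\in\mc{T}_{k,z}=\{p_1\cdots p_k:p_i\le z,\ \chi(p_i)=\zeta^{j_0}\}$, the mean-zero weight
\[
f_t(n):=\sum_{ab=t}\frac{\mu(b)}{b}\,1_{a\mid n},
\]
and computes $\sum_{t}M_{h_\ell}(x;t)=\tfrac1x\sum_{n\le x}h_\ell(n)\sum_t f_t(n)$ in two ways. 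Via Lipschitz one obtains $A_{k,z}\cdot T(x)$, and the M\"obius over $c\mid t$ produces directly the binomial expansion
\[
A_{k,z}=\bigl(1+O(k^2/\sg_{j_0}(z)^2)\bigr)\,\frac{\sg_{j_0}(z)^k}{k!}\bigl(\zeta^{j_0g\ell}\Theta_{j_0,\ell}(z)-1\bigr)^k,
\]
with $\Theta_{j_0,\ell}(z)=1+O(m^2\log(1/\e)/\sg_{j_0}(z))$; this is the analogue of your ``accumulating $(\alpha-1)^k$ rather than $1-\alpha^k$''. On the other side the paper uses Cauchy--Schwarz and the near-orthogonality $\tfrac1x\sum_{n\le x}f_{t_1}(n)f_{t_2}(n)=\tfrac{\phi(t)}{t^2}1_{t_1=t_2}+O(d(t_1)d(t_2)/x)$, which gives $\bigl|\sum_t M_{h_\ell}(x;t)\bigr|\ll\sg_{j_0}(z)^{k/2}+x^{-1/6}$ without any higher-moment input. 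Your route via centred moments of $\Omega$ (better, $\omega_P$, distinct prime divisors) amounts to the same mechanism: for $k=1$ the paper's $\sum_p f_p(n)$ is exactly $\omega_P(n)-\sg$, and in general $\sum_{t}f_t(n)$ is a tidied, orthogonalised cousin of $(\omega_P(n)-\sg)^k/k!$. The paper's packaging is a little cleaner because the second-moment computation is exact, whereas your version requires a $k$th (or, after Cauchy--Schwarz, $2k$th) central-moment bound for $\omega_P$ with a tracked constant in $k$.

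Two minor technical remarks on your sketch. First, the falling factorials $\Omega(n)^{\underline{i}}$ alone yield main terms $\alpha^i\sg^i\,T(x)$; to land on $(\alpha-1)^k$ you must explicitly take the centred combination $\sum_{i=0}^k\binom{k}{i}(-\sg)^{k-i}$ times the $i$th peeled identity, which is what rebuilds $(\omega_P(n)-\sg)^k$. Second, since $\Omega$ counts multiplicity, the displayed identity for $\Omega(n)^{\underline{i}}$ as a sum over \emph{distinct primes} $p_j\mid n$ is not literally correct when $p^2\mid n$; use $\omega_P$ instead and absorb the $O(1)$ contribution from higher prime powers.
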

\begin{proof}
Define
$$
\mc{T}_{k,z} := \{p_1\cdots p_k : \, p_i \leq z \text{ and } \chi(p_i) = \zeta^{j_0} \text{ for all } 1 \leq i \leq k\}.
$$ 
For each $t \in \mc{T}_{k,z}$ and $n \in \mb{N}$ define\footnote{Note that when $t = p$ is prime this reduces to the ``mean $0$'' function
$f_p(n) = 1_{p|n} - \frac{1}{p}$.}
$$
f_t(n) 
:= \sum_{ab = t} \frac{\mu(b)}{b} 1_{a|n}.
$$
Let $1 \leq \ell \leq r$. Select $y_{g\ell,0}$ to be a maximiser for $\max_{|y| \leq 2 \log x} |F_{g\ell}(1+iy)|$, where for $s \in \mb{C}$ with $\text{Re}(s) > 0$ we write
$$
F_{g\ell}(s) := \prod_{p \leq x} \left(1-\frac{\chi^{g\ell}(p)}{p^s}\right)^{-1}.
$$
We then define 
$$
y_{g\ell} := \begin{cases} 0 &\text{ if } |y_{g\ell,0}| > \tfrac{1}{2}\log x, \\ y_{g\ell,0} &\text{ otherwise.} \end{cases}
$$ 
Now, by Proposition \ref{prop:shortStruc}, whenever $1 \leq \ell \leq r$ we have
$$
\mb{D}(\chi^{g\ell},1;x)^2 \leq 200m^2\log(1/\e).
$$
Since, when $y_{g\ell} \neq 0$, 
$$
\exp\left(-\mb{D}(\chi^{g\ell},n^{iy_{g\ell}};x)^2\right) \asymp
\frac{|F_{g\ell}(1+iy_{g\ell})|}{\log x} = \max_{|y| \leq 2\log x} \frac{|F_{g\ell}(1+iy)|}{\log x} \asymp \exp\left(-\min_{|y| \leq 2\log x}\mb{D}(\chi^{g\ell},n^{iy};x)^2\right),
$$
it follows that
$$
\mb{D}(\chi^{g\ell}, n^{iy_{g\ell}};x)^2 = \min_{|y| \leq 2 \log x} \mb{D}(\chi^{g\ell},n^{iy};x)^2 + O(1).
$$
Using this minimal property when $y_{g\ell} \neq 0$, we deduce that when $\e$ is sufficiently small,
$$
\mb{D}(1,n^{iy_{g\ell}};x)^2 \leq 2(\mb{D}(\chi^{g\ell},1;x) + \mb{D}(\chi^{g\ell},n^{iy_{g\ell}};x)) \leq 4\mb{D}(\chi^{g\ell},1;x)^2 + O(1) \leq 1000 m^2\log(1/\e),
$$
a bound which also trivially holds when $y_{g\ell} = 0$. From this and Lemma \ref{lem:repelChar} we deduce that there is $C \geq 1$ such that
\begin{equation}\label{eq:ylBd}
|y_{g\ell}| \ll \e^{-Cm^2}/\log x.
\end{equation}
Next, define $h_{\ell}(n) := \chi^{g\ell}(n) n^{-iy_{g\ell}}$, and consider the sum
$$
M_{h_{\ell}}(x;t) := \frac{1}{x}\sum_{n \leq x} h_{\ell}(n)f_t(n), \quad t \in \mc{T}_{k,z}.
$$
We will estimate the sum $\sum_{t \in \mc{T}_{k,z}} M_{h_{\ell}}(x;t)$ in two ways.  \\
First, we relate this sum to the mean value of $h_{\ell}$. To begin with, observe that since $h_{\ell}$ is completely multiplicative, 
$$
M_{h_{\ell}}(x;t) = \frac{1}{x}\sum_{n \leq x} h_{\ell}(n) \sum_{ac = t} \frac{\mu(c)}{c}1_{a|n} = \sum_{ac = t} \frac{\mu(c)h_{\ell}(a)}{ac} \cdot \frac{a}{x} \sum_{m \leq x/a}  h_{\ell}(m) = \frac{1}{t} \sum_{ac = t} \mu(c)h_{\ell}(a) \cdot \frac{a}{x} \sum_{m \leq x/a} h_{\ell}(m).
$$
For each $a |t$ the Lipschitz bound \cite[Thm. 4]{GSDecay} yields
$$
\frac{a}{x} \sum_{m \leq x/a} h_{\ell}(m) = \frac{1}{x} \sum_{m \leq x} h_{\ell}(m) + O\left(\left(\frac{\log (3a)}{\log x}\right)^{1-2/\pi} \log\left(\frac{\log x}{\log(3a)}\right)^2\right).
$$
It follows that
\begin{align*}
M_{h_{\ell}}(x;t) = \left(\frac{1}{t} \sum_{ac = t} \mu(c)h_{\ell}(a) \right) \frac{1}{x} \sum_{n \leq x} h_{\ell}(n) + O(\delta_t(x)),
\end{align*}
where $\delta_t$ is defined via
$$
\delta_t(x) := \frac{1}{t} \sum_{ac = t} \mu^2(c) \left(\frac{\log(3a)}{\log x}\right)^{1-2/\pi} \log\left(\frac{\log x}{\log (3a)}\right)^2.
$$
Now let
\begin{align*}
A_{k,z} &:= \sum_{t \in \mc{T}_{k,z}} \frac{1}{t}\sum_{ac = t} \mu(c)h_{\ell}(a) = \sum_{ac \in \mc{T}_{k,z}} \frac{\mu(c)}{c} \frac{\chi^{g\ell}(a)}{a^{1+iy_{g\ell}}}.
\end{align*}
Then we have
\begin{align}
A_{k,z} \frac{1}{x} \sum_{n \leq x} h_{\ell}(n) &= \sum_{t \in \mc{T}_{k,z}} M_{h_{\ell}}(x;t) + O\left(\sum_{t \in \mc{T}_{k,z}} \delta_t(x)\right). \label{eq:AkzExp}
\end{align}
We next obtain an asymptotic estimate for $A_{k,z}$. Since $ac \in \mc{T}_{k,z}$ if and only if there is $0 \leq \nu \leq k$ such that $a\in \mc{T}_{\nu,z}$ and $c \in \mc{T}_{k-\nu,z}$, taking into account the number of representations of this shape we deduce that
\begin{align}
A_{k,z} &= \sum_{0 \leq \nu \leq k} \left(\sum_{c \in \mc{T}_{k-\nu,z}} \frac{\mu(c)}{c}\right) \left(\sum_{a \in \mc{T}_{\nu,z}} \frac{\chi^{g\ell}(a)}{a^{1+iy_{g\ell}}}\right) \nonumber \\
&= \sum_{0 \leq \nu \leq k} \frac{1}{(k-\nu)! \nu !}\left(\sum_{\ss{p_1,\ldots,p_{k-\nu} \leq z \\ \chi(p_i) = \zeta^{j_0} \\ p_i \text{ distinct}}} \frac{(-1)^{k-\nu}}{p_1\cdots p_{k-\nu}}\right) \left(\sum_{\ss{p \leq z \\ \chi(p) = \zeta^{j_0}}} \frac{\chi^{\ell}(p)p^{-iy_{g\ell}}}{p}\right)^{\nu} \nonumber \\
&= \frac{1}{k!}\sum_{0 \leq \nu \leq k} \binom{k}{\nu} \zeta^{j_0g\ell\nu}(-1)^{k-\nu} \left(\sum_{\ss{p_1,\ldots,p_{k-\nu} \leq z \\ \chi(p_i) = \zeta^{j_0} \\ p_i \text{ distinct}}} \frac{1}{p_1\cdots p_{k-\nu}}\right)\left(\sum_{\ss{p \leq z \\ \chi(p) = \zeta^{j_0}}} \frac{p^{-iy_{g\ell}}}{p}\right)^{\nu}.
\label{eq:splitAC}
\end{align}
Dispensing with the distinctness condition in the first bracketed expression in \eqref{eq:splitAC}, we get 
\begin{align*}
\sum_{\ss{p_1,\ldots,p_{k-\nu} \leq z \\ \chi(p_i) = \zeta^{j_0} \\ p_i \text{ distinct}}} \frac{1}{p_1\cdots p_{k-\nu}} &= \sum_{\ss{p_1,\ldots,p_{k-\nu} \leq z \\ \chi(p_i) = \zeta^{j_0}}} \frac{1}{p_1\cdots p_{k-\nu}} + O\left(\sum_{1 \leq i < j \leq k-\nu} \sum_{\ss{p_1,\ldots,p_{k-\nu} \leq z \\ p_i = p_j \\ \chi(p_\ell) = \zeta^{j_0}}} \frac{1}{p_1\cdots p_{k-\nu}} \right) \\
&= \sg_{j_0}(z)^{k-\nu} \left(1+O\left(k^2\sg_{j_0}(z)^{-2}\right)\right),
\end{align*}
for each $0 \leq \nu \leq k-2$. Setting
$$
\Theta_{j_0,\ell}(z) := \frac{1}{\sg_{j_0}(z)} \sum_{\ss{p \leq z \\ \chi(p) = \zeta^{j_0}}} \frac{p^{-iy_{g\ell}}}{p},
$$
we may rewrite our expression for $A_{k,z}$ in \eqref{eq:splitAC} as
\begin{align*}
A_{k,z} &= \left(1+O(k^2\sg_{j_0}(z)^{-2})\right)\frac{\sg_{j_0}(z)^k}{k!}\sum_{0 \leq \nu \leq k} \binom{k}{\nu} \left(\zeta^{j_0g\ell} \Theta_{j_0,\ell}(z)\right)^\nu (-1)^{k-\nu} \\
&= \left(1+O(k^2\sg_{j_0}(z)^{-2})\right)\frac{\sg_{j_0}(z)^k}{k!} \left(\zeta^{j_0g\ell} \Theta_{j_0,\ell}(z)-1\right)^k. 
\end{align*}
Next, we estimate $\Theta_{j_0,\ell}(z)$. Set $z_0 := \min\{z,e^{1/|y_{g\ell}|}\}$. By Mertens' theorem and \eqref{eq:ylBd}, we find
\begin{align*}
\sum_{\ss{p \leq z \\ \chi(p) = \zeta^{j_0}}} \frac{|1-p^{-iy_{g_\ell}}|}{p} \ll \sum_{p \leq e^{1/|y_{g\ell}|}} \frac{|y_{g\ell}|\log p}{p} + \sum_{z_0 < p \leq z} \frac{1}{p} \ll 1 + \log\left(|y_{g\ell}|\log z\right) \ll m^2\log(1/\e).
\end{align*}
We deduce therefore that
\begin{align} \label{eq:twistedSumj0}
\Theta_{j_0,\ell}(z) = 1+ O\left(\frac{m^2\log(1/\e)}{\sg_{j_0}(z)}\right),
\end{align}
so that we may finally conclude that
\begin{align*}
A_{k,z} &= \left(1+O(k^2\sg_{j_0}(z)^{-2})\right)\frac{\sg_{j_0}(z)^k}{k!} \left(\zeta^{j_0g\ell}-1 + O\left(\frac{m^2\log(1/\e)}{\sg_{j_0}(z)}\right)\right)^k.
\end{align*}
We now obtain an upper bound for the right-hand side in \eqref{eq:AkzExp}, beginning with the average value of $M_{h_{\ell}}(x;t)$. 
By the Cauchy-Schwarz inequality,
\begin{align}
\left|\sum_{t \in \mc{T}_{k,z}} M_{h_{\ell}}(x;t)\right| &= \left|\frac{1}{x}\sum_{n \leq x} h_{\ell}(n) \left(\sum_{t \in \mc{T}_{k,z}} f_t(n)\right)\right| \leq \left( \frac{1}{x}\sum_{n \leq x} \left|\sum_{t \in \mc{T}_{k,z}} f_t(n)\right|^2\right)^{1/2} =  \left(\sum_{t_1,t_2 \in \mc{T}_{k,z}} \frac{1}{x}\sum_{n \leq x} f_{t_1}(n)f_{t_2}(n)\right)^{1/2}. \label{eq:CSMgxt}
\end{align}
Next, fixing $t_1,t_2 \in \mc{T}_{k,z}$ for the moment we observe that
\begin{align*}
\sum_{n \leq x} f_{t_1}(n)f_{t_2}(n) &= \sum_{a_1c_1 = t_1} \sum_{a_2c_2 = t_2} \frac{\mu(c_1)\mu(c_2)}{c_1c_2} \sum_{\ss{n \leq x \\ [a_1,a_2]|n}} 1 =  \sum_{a_1c_1 = t_1} \sum_{a_2c_2 = t_2} \frac{\mu(c_1)\mu(c_2)}{c_1c_2} \left(\frac{x}{[a_1,a_2]} + O(1)\right) \\
&= \frac{x}{t_1t_2} \sum_{a_1c_1 = t_1}\sum_{a_2c_2 = t_2} \mu(c_1)\mu(c_2) (a_1,a_2) + O(d(t_1)d(t_2)),
\end{align*}
where $d(t)$ is the divisor function. For each pair of divisors $a_1c_1 = t_1$, notice that\footnote{Given $n \in \mb{N}$ and a prime $p$ we write $\nu_p(n)$ to be the maximal power $\nu \geq 0$ such that $p^\nu | n$.}
\begin{equation}\label{eq:muGCD}
\sum_{a_2c_2 = t_2} \mu(c_2)(a_1,a_2) = \prod_{p^\nu || t_2} \left(p^{\min\{\nu,\nu_p(a_1)\}} - p^{\min\{\nu-1,\nu_p(a_1)\}}\right),
\end{equation}
which is non-zero precisely when 
$$
\nu_p(a_1) \geq \nu \text{ for all } p^\nu ||t_2, \text{ i.e., } t_2|a_1. 
$$
In this case, the left-hand side of \eqref{eq:muGCD} is precisely $\phi(t_2) 1_{t_2|a_1}$. Since $a_1|t_1$ we get $t_2|t_1$. Making the change of variables $a_1 = t_2A_1$, we find
\begin{align*}
\sum_{n \leq x} f_{t_1}(n)f_{t_2}(n) &= \frac{x\phi(t_2)}{t_1t_2} 1_{t_2|t_1} \sum_{\ss{A_1c_1 = t_1/t_2}}\mu(c_1) + O(d(t_1)d(t_2)) \\
&= \frac{x\phi(t_2)}{t_1t_2} 1_{t_2=t_1} + O(d(t_1)d(t_2)) = \frac{x\phi(t)}{t^2}1_{t_1 = t_2 = t} + O(d(t_1)d(t_2)).
\end{align*}
Substituting this back into \eqref{eq:CSMgxt}, using $d(t) \leq 2^k$ for all $t \in \mc{T}_{k,z}$ and $|\mc{T}_{k,z}| \leq \pi(z)^k$, we obtain
\begin{align*}
\left|\sum_{t \in \mc{T}_{k,z}} M_{h_{\ell}}(x;t)\right| &\ll \left(\sum_{t \in \mc{T}_{k,z}} \frac{\phi(t)}{t^2}\right)^{1/2} + \left(\frac{1}{x}\sum_{t_1,t_2 \in \mc{T}_{k,z}} d(t_1)d(t_2)\right)^{1/2} \\
&\ll \sg_{j_0}(z)^{k/2} + \frac{(2\pi(z))^k}{\sqrt{x}}.
\end{align*}
Finally, we estimate the contribution to \eqref{eq:AkzExp} from $\delta_t(x)$. Applying the trivial bound $a \leq t$ in the sum defining $\delta_t(x)$, we get
\begin{align*}
\delta_t(x) \leq \frac{d(t)}{t} \left(\frac{\log t}{\log x}\right)^{1-2/\pi} \log\left(\frac{\log x}{\log t}\right)^2 \ll \frac{d(t)}{t} \left(\frac{\log t}{\log x}\right)^{\rho},
\end{align*}
for any $0 < \rho < 1-2/\pi$. Using the simple inequality
$$
(a_1+ \cdots + a_m)^{\rho} \leq a_1^{\rho} + \cdots + a_m^{\rho}, \quad a_j \in (0,\infty),
$$
together with Mertens' theorem and partial summation, we deduce that
\begin{align*}
\sum_{t \in \mc{T}_{k,z}} \delta_t(x) &\leq \frac{1}{(\log x)^\rho} \sum_{t \in \mc{T}_{k,z}} \frac{d(t)}{t} \left(\sum_{p^\nu || t} \log p^\nu\right)^\rho \\
&\leq \frac{1}{(\log x)^\rho} \sum_{\ss{p \leq z \\ \nu \geq 1}} \frac{(\nu+1)(\log p^\nu)^\rho}{p^\nu}\sum_{\ss{t' \in \mc{T}_{k-\nu,z} \\ p \nmid t'}} \frac{d(t')}{t'} \\
&\ll \left(\frac{\log z}{\log x}\right)^\rho (2\sg_{j_0}(z))^{k-1}.
\end{align*}
Consequently, when $km \leq \eta \sqrt{\sg_{j_0}(z)}$ for $\eta > 0$ small enough, we find that
\begin{align*}
\frac{1}{x} \left|\sum_{n \leq x} h_{\ell}(n)\right| &\ll \frac{\sg_{j_0}(z)^k}{|A_{k,z}|} \left(\sg_{j_0}(z)^{-k/2} + 2^k\left(\frac{\log z}{\log x}\right)^\rho \sg_{j_0}(z)^{-1} + \frac{(2\pi(z))^k}{\sqrt{x}}\right).
\end{align*}
Recall that $h_{\ell}(n) = \chi^{g\ell}(n)n^{-iy_{g\ell}}$. Recalling the bound \eqref{eq:ylBd} and applying \cite[Lem. 7.1]{GSDecay}, we obtain
\begin{align}
\left|\frac{1}{x}\sum_{n \leq x} \chi^{g\ell}(n)\right| &= (1+ |y_{g\ell}|)\left|\frac{1}{x}\sum_{n \leq x} h_{\ell}(n)\right| + O\left(\frac{\exp\left(\mb{D}(\chi^{g\ell}, n^{iy_{g\ell}};x) \sqrt{(2+o(1))\log\log x}\right)}{\log x}\right) \nonumber\\
&\ll \left|\frac{1}{x} \sum_{n \leq x} h_{\ell}(n)\right| + \frac{1}{\sqrt{\log x}} \nonumber\\
&\ll \frac{\sg_{j_0}(z)^k}{|A_{k,z}|}\left(\sg_{j_0}(z)^{-k/2} + 2^k\left(\frac{\log z}{\log x}\right)^\rho \sg_{j_0}(z)^{-1} + \frac{(2\pi(z))^k}{\sqrt{x}}\right). \label{eq:RHSinProp42}
\end{align}
whenever $m^2 \log(1/\e) < c \log\log x$ for $c > 0$ small enough. In light of the lower bound 
$$
|1-\zeta^{j_0g\ell}| = 2|\sin(\pi j_0g\ell/d)| \geq 4\|j_0 g\ell/d\| = 4\|j_0\ell/r\|,
$$
we obtain that
$$
|A_{k,z}| \gg \frac{4^k\sg_{j_0}(z)^k}{k!} \left(\left\|\frac{j_0\ell}{r}\right\| + O\left(\frac{m^2\log(1/\e)}{\sg_{j_0}(z)}\right)\right)^k.
$$
Finally, as $z^k \leq x^{1/3}$, the last error term in \eqref{eq:RHSinProp42} is $O(x^{-1/6})$. The claim now follows upon rearranging.
\end{proof}
The bound in Proposition \ref{prop:largekBd} is only efficient provided we can obtain a lower bound for $\|j_0\ell/r\|$ for many $\ell$. The purpose of the next result is to bound the number of $\ell$ for which $\|j_0\ell/r\|$ is small.
\begin{prop} \label{prop:discBound}
With the above notation, there is an absolute constant $C > 0$ such that if
$$
\theta_0 := C\left(\frac{m^2\eta^{-1}\log(1/\e)}{\log\log d}\right)^{1/2}.
$$
then at least one of the following is true:
\begin{enumerate}[(i)]
\item $|S_{\chi}(x)| \leq \e x$, and
\item $(j_0,r) \leq \theta_0 r$.
\end{enumerate}
Moreover, in the second case we find that for any $\theta \in [\theta_0,1/2]$,
$$
|\{1 \leq \ell \leq r : \|j_0 \ell/r\| \leq \theta\}| \ll \theta r.
$$
\end{prop}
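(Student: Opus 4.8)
The plan is to dispose of alternative~(i) and then squeeze a lower bound for $\mb{D}(\chi,1;x)^2$ out of the single ``heavy'' fibre $S_{j_0}$. If $|S_\chi(x)|\le \e x$ we are in case~(i), so I would assume $|S_\chi(x)|>\e x$, i.e. $1\in\mc{C}_d(\e)$. Under our standing hypothesis $|\mc{C}_d(\e)|\ge\e d$, Proposition~\ref{prop:shortStruc} puts $\mc{C}_d(\e)$ inside the subgroup $H=m\mc{C}_d(\e)=\langle g\rangle$ of $\Z/d\Z$; since $g\mid d$, the relation $1\in\langle g\rangle$ forces $g=1$, hence $r=d/g=d$. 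Next, one may assume $\theta_0<\tfrac12$, for otherwise $(j_0,d)$ is a proper divisor of $d$, hence at most $d/2\le\theta_0 d=\theta_0 r$, so (ii) holds. With $\theta_0<\tfrac12$ one has $\log\log d\gg m^2\eta^{-1}\log(1/\e)$, so for $C$ large enough $(\log d)^{-1/5}<\e<|S_\chi(x)|/x$, whence the first alternative of Proposition~\ref{prop:LowBdSigma} fails and $\Sigma_\chi(x)\ge c_1\log\log d$.

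The crux is then a one-line estimate. Because $g=1$, Proposition~\ref{prop:shortStruc} gives $\mb{D}(\chi,1;x)^2\le 200m^2\log(1/\e)$. Retaining in the definition of $\mb{D}(\chi,1;x)^2$ only the primes $p\le x$ with $\chi(p)=\zeta^{j_0}$, and using $1-\cos(2\pi t)\ge 8\|t\|^2$ exactly as in \eqref{eq:toFE}, I get $8\|j_0/d\|^2\,\sg_{j_0}(x)\le\mb{D}(\chi,1;x)^2\le 200m^2\log(1/\e)$. In the large-$\sg_j$ case $\sg_{j_0}(x)>\tfrac{\eta}{g}\Sigma_\chi(x)=\eta\Sigma_\chi(x)\ge\eta c_1\log\log d$, so $\|j_0/d\|^2\ll m^2\eta^{-1}\log(1/\e)/\log\log d$; choosing $C$ large enough (in terms of $c_1$) yields $\|j_0/d\|\le\theta_0$. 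Since $(j_0,d)$ divides both $j_0$ and $d-j_0$, it follows that $(j_0,d)\le\min\{j_0,d-j_0\}=d\|j_0/d\|\le\theta_0 d=\theta_0 r$, which is (ii).

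For the ``moreover'' clause I would use elementary counting. Assuming (ii), put $\gamma:=(j_0,r)$, $r=\gamma r'$, $j_0=\gamma a$ with $(a,r')=1$, so $\|j_0\ell/r\|=\|a\ell/r'\|$. As $\ell$ ranges over $1\le\ell\le r$, the residue $a\ell\bmod r'$ hits each class of $\Z/r'\Z$ exactly $\gamma$ times, so $|\{1\le\ell\le r:\|j_0\ell/r\|\le\theta\}|=\gamma\,|\{b\bmod r':\|b/r'\|\le\theta\}|\le\gamma(2\theta r'+1)=2\theta r+\gamma\le 2\theta r+\theta_0 r\le 3\theta r$ for $\theta\in[\theta_0,1/2]$, using $\gamma=(j_0,r)\le\theta_0 r$ from (ii). This gives the claimed $\ll\theta r$.

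I expect the only real obstacle to be the reduction to $g=1$: without assuming $|S_\chi(x)|$ is large, Proposition~\ref{prop:shortStruc} controls only $\mb{D}(\chi^g,1;x)^2$, and running the same fibre argument through $\chi^g$ produces $\|j_0/r\|^2\ll g\cdot m^2\log(1/\e)/\sg_{j_0}(x)$ with a spurious factor $g$ — which can be as large as $1/\e$ — too lossy to conclude. It is precisely the largeness of $|S_\chi(x)|$ that kills this factor. Everything else (the trigonometric inequality, the residue count, and the bookkeeping ensuring the hypotheses of Propositions~\ref{prop:shortStruc} and~\ref{prop:LowBdSigma} hold once $\theta_0<\tfrac12$) is routine.
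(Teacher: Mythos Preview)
Your proof is correct and follows essentially the same line as the paper: both deduce $g=1$ from $1\in\mc{C}_d(\e)$, then combine the single-fibre lower bound $\mb{D}(\chi,1;x)^2\ge 8\|j_0/d\|^2\sg_{j_0}(x)$ with Proposition~\ref{prop:shortStruc} and $\sg_{j_0}(x)>\eta\,\Sigma_\chi(x)\gg\eta\log\log d$ to bound $(j_0,d)$. For the ``moreover'' clause the paper invokes the Erd\H{o}s--Tur\'an inequality, whereas you use the more elementary direct residue count $\gamma\,|\{b\bmod r':\|b/r'\|\le\theta\}|\le 2\theta r+\gamma$; your route is shorter and yields the same $\ll\theta r$ bound.
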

\begin{proof}
Assume (i) fails, so that $|S_{\chi}(x)| \geq \e x$, i.e., $1 \in \mc{C}_d(\e)$. By Proposition \ref{prop:shortStruc}, $g\ell \equiv 1 \pmod{d}$ for some $g|d$, whence $g = 1$, and 
$$
\mb{D}(\chi,1;x)^2 \leq 200 m^2 \log(1/\e).
$$
Combining this with \eqref{eq:toFE} and $\ell = 1$, we find that
\begin{equation}\label{eq:j0SigmaBd}
200 m^2 \log(1/\e) \geq 8\sum_{1 \leq j \leq d-1} \left\|\frac{j}{d}\right\|^2 \sg_j \geq 8\left\|\frac{j_0}{d}\right\|^2 \sg_{j_0} \geq 8\eta\left\|\frac{j_0}{d}\right\|^2 \Sigma_{\chi}(x).
\end{equation}
Let $\gamma := (j_0,d)$ and put $\tilde{j}_0 := j_0/\gamma$, $D := d/\gamma$. Since 
$$
\|j_0/d\| = \|\tilde{j}_0/D\| \geq 1/D = \gamma/d,
$$ 
it follows from Proposition \ref{prop:LowBdSigma} that there is a constant $C >0$ such that
$$
\gamma \leq d \left(\frac{25 m^2 \eta^{-1}\log(1/\e)}{\Sigma_{\chi}(x)}\right)^{1/2} \leq Cd\left(\frac{m^2 \eta^{-1}\log(1/\e)}{\log\log d}\right)^{1/2} = \theta_0 d.
$$
Since $r = d$, $(j_0,r)/r = (j_0,d)/d$ and (ii) follows. \\
Next, write $R := r/(r,j_0)$ and $J_0 := j_0/(r,j_0)$. Dividing $\ell = m R + L$ with $0 \leq m \leq r/R-1$ and $1 \leq L \leq R$, observe that
$$
\|j_0 \ell/r\| = \|J_0 \ell/R\| = \|J_0 L/R\|.
$$
Let $\theta \in [\theta_0,1/2]$. As $(J_0,R) = 1$, taking $V := R/2$ and applying the Erd\H{o}s-Tur\'{a}n inequality \cite[I.6.15]{Ten}, we obtain
\begin{align*}
\left|\left|\left\{1 \leq L \leq R : \, \left\|\frac{J_0 L}{R}\right\| \leq \theta\right\}\right| - 2\theta R\right| &\ll R\left(\frac{1}{V} + \sum_{1 \leq \nu \leq V} \frac{1}{\nu} \left|\frac{1}{R}\sum_{\ss{1 \leq L \leq R}} e\left(\frac{\nu J_0 L}{R}\right)\right|\right) \\
&= R\left(\frac{1}{V} + \sum_{\ss{1 \leq \nu \leq V \\ R|J_0\nu}} \frac{1}{\nu}\right)  \ll 1.
\end{align*}
It follows that
$$
|\{1 \leq \ell \leq r : \|j_0 \ell/r\| \leq \theta\}| = \frac{r}{R} \cdot \left(2\theta R + O(1)\right) = r\left(2\theta + O(1/R)\right).
$$
As $R  = r/(j_0,r) > \theta^{-1}$, we obtain
$$
|\{1 \leq \ell \leq r : \|j_0 \ell/r\| \leq \theta \}| \ll \theta r,
$$
as claimed.
\end{proof}
\begin{proof}[Proof of Theorem \ref{thm:impShort}]
Since the bound in the theorem is otherwise trivial, we may assume that $d$ is larger than any fixed constant. Hence, we may assume that $x$ is also larger than any fixed constant, given the constraint $x > q^\delta$. \\
Let $\tau \in (0,1/2)$ and let $\e := \Sigma_{\chi}(x)^{-1/(6+\tau)}$ and suppose throughout that $|S_{\chi}(x)| \geq \e x$. Assume for the sake of contradiction that $|\mc{C}_d(\e)| \geq \e d$. \\
By Proposition \ref{prop:shortStruc} we find integers $1 \leq m \leq \e^{-2}$ and $1 \leq g \leq \e^{-1}$ such that
$$
\max_{1 \leq \ell \leq d/g} \mb{D}(\chi^{g\ell},1;x)^2 \ll m^2 \log(1/\e).
$$ 
Fix $\eta = \theta = \e^{\tau/30}$, so that since 
$$
\theta_0 \ll \left(\frac{\e^{-4 - \tau/30} \log(1/\e)}{\e^{-(6+\tau)}}\right)^{1/2} \ll \e^{1/2},
$$
we have $\theta \in [\theta_0,1/2]$, when $\e$ is sufficiently small. \\
Let $1 \leq j_0 \leq d-1$ be an index satisfying 
$$
\sg_{j_0}(x) = \max_{1 \leq j \leq d-1} \sg_j(x).
$$ 
If $\sg_{j_0}(x) \leq \tfrac{\eta}{g}\Sigma_{\chi}(x)$ then by Proposition \ref{prop:smallSig} we 
can find $1 \leq \ell \leq d/g$ such that
$$
\mb{D}(\chi^{g\ell},1;x)^2 \geq \tfrac{1}{2}\Sigma_{\chi}(x).
$$
Comparing these bounds using Proposition \ref{prop:LowBdSigma} and $m \leq \e^{-2}$, we find
$$
\e^{-6-\tau} = \Sigma_{\chi}(x) \ll m^2 \log(1/\e) \leq \e^{-4}\log(1/\e),
$$
which is a\footnote{We emphasise that this part of the argument is independent of the assumption $|S_{\chi}(x)| \geq \e x$.} contradiction.\\
Next, suppose $\sg_{j_0}(x) > \tfrac{\eta}{g}\Sigma_{\chi}(x)$. Since $1 \in \mc{C}_d(\e)$ we deduce that $g = 1$, and $d = r$. Let $M \geq 2$ be a parameter to be chosen later, and set $z := x^{1/M}$. We have the crude lower bound
$$
\sg_{j_0}(z) \geq \sg_{j_0}(x) - \sum_{x^{1/M} < p \leq x} \frac{1}{p} \geq \eta \Sigma_{\chi}(x) - \log M - O(1).
$$
Assume henceforth that $M$ is chosen so that $\log M \leq \frac{\eta}{3} \Sigma_{\chi}(x)$. Thus, when $d$ is large enough we have 
$$
\sg_{j_0}(z) \geq \frac{\eta}{2} \Sigma_{\chi}(x).
$$
We first establish the existence of $\ell$ with $\|j_0\ell/d\| > \theta$.  
Assume for the sake of contradiction that
$$
\max_{\tilde{\ell} \in \mc{C}_d(\e)} \|j_0\tilde{\ell}/d\| \leq \theta/m.
$$
Since $\mb{Z}/\ell \mb{Z} = m\mc{C}_d(\e)$, if 
$$
\ell \equiv a_1 + \cdots + a_m \pmod{d}, \quad a_i \in \mc{C}_d(\e)
$$
then the triangle inequality implies that for every $1 \leq \ell \leq d$,
$$
\left\|\frac{j_0 \ell}{d} \right\| \leq \sum_{1 \leq i \leq m } \left\|\frac{j_0 a_i}{d}\right\| \leq m \cdot \frac{\theta}{m} \leq \theta.
$$
On the other hand, since $|S_{\chi}(x)| \geq \e x$, Proposition \ref{prop:discBound} shows that 
$$
|\{1 \leq \ell \leq d : \|j_0 \ell/d\| \leq \theta\}| \ll \theta d,
$$
which is a contradiction.
In particular, there must exist $\ell \in \mc{C}_d(\e)$ for which $\|j_0\ell/d\| > \theta/m$. Note that by our parameter choices, we have
$$
\frac{m^2\log(1/\e)}{\sg_{j_0}(z)} < \frac{2m^2\eta^{-1} \log(1/\e)}{\Sigma_{\chi}(x)} \ll \e^{6+\tau -4 - \tau/2} = \e^{2 + \tau/2} \leq \e^{\tau/4} \frac{\theta}{m} < \e^{\tau/4} \left\|\frac{j_0\ell}{d}\right\|. 
$$
Taking $\rho = 1/4$ and applying Proposition \ref{prop:largekBd} we thus find that if 
$$
1 \leq k \leq \min\left\{\frac{M}{3}, \sqrt{\eta \Sigma_{\chi}(x)}\right\}
$$
then for any $1 \leq \ell \leq r$ for which $r \nmid j_0\ell$ we have
\begin{align} \label{eq:glUppBd}
\frac{1}{x}|S_{\chi^{\ell}}(x)| \ll k! \left\|\frac{j_0\ell}{r}\right\|^{-k} \left(\left(\frac{1}{\eta \Sigma_{\chi}(x)}\right)^{k/2} +\frac{1}{\eta M^{1/4}\Sigma_{\chi}(x)} + x^{-1/6}\right).
\end{align}
By Proposition \ref{prop:shortStruc}, $\mc{C}_d(\e) \subseteq m\mc{C}_d(\e) = \mb{Z}/d\mb{Z}$. 
We deduce using $k! \leq k^k$ that
$$
\e \leq \frac{1}{x}|S_{\chi^{g\ell}}(x)| \ll \left(\left(\frac{(km)^2}{\eta \theta^2 \Sigma_{\chi}(x)}\right)^{k/2} + \frac{(2km/\theta)^k }{\eta M^{1/4}\Sigma_{\chi}(x)}\right).
$$
As $m \leq \e^{-2}$, we get that
\begin{equation}\label{eq:contraShort}
\e \ll \left(\left(\frac{k^2\e^{-4}}{\eta \theta^2\Sigma_{\chi}(x)}\right)^{k/2} + \frac{(2k/(\e^2\theta))^k}{\eta M^{1/4}\Sigma_{\chi}(x)}\right).
\end{equation}
Set now $\eta = \theta = \e^{\tau/30}$, choose $M$ so that $\log M = \frac{\eta}{3} \Sigma_{\chi}(x)$ and select 
$$
k := \left \lfloor \frac{10}{\tau}\sqrt{\e^{4+\tau/2} \Sigma_{\chi}(x)}\right \rfloor.
$$ 
We then find that if $d$ is sufficiently large relative to $\tau$ then
\begin{align*}
\left(\frac{k^2\e^{-4}}{\eta \theta^2 \Sigma_{\chi}(x)}\right)^{k/2} &\leq \left(100\tau^{-2} \e^{2\tau/5}\right)^{\frac{5}{\tau}} \ll_{\tau} \e^{2}, 
\end{align*}
on the one hand, and also
\begin{align*}
\frac{(2k/(\e^2\theta))^k \e^{-1}}{\eta M^{1/4}\Sigma_{\chi}(x)} &\ll \frac{1}{\e^{31/30} \Sigma_{\chi}(x)} \exp\left(-\frac{\e^{31/30}}{12}\Sigma_{\chi}(x) + 10\frac{\e^{5/2}}{\tau}\Sigma_{\chi}(x)^{1/2} \log \Sigma_{\chi}(x)\right) \\
&\ll \Sigma_{\chi}(x)^{-2/3}e^{-\frac{\e^2}{24} \Sigma_{\chi}(x)} \ll \Sigma_{\chi}(x)^{-100} \\
&\ll \e^{600},
\end{align*}
on the other. It follows that \eqref{eq:contraShort} yields a contradiction. \\
We conclude, therefore, that $|\mc{C}_d(\e)| \leq \e d$, and therefore
$$
\frac{1}{d}\sum_{1 \leq \ell \leq d} |S_{\chi^\ell}(x)|^2 \leq \e^2 + \frac{|\mc{C}_d(\e)|}{d} \leq \e.
$$
As $\e = \Sigma_{\chi}(x)^{-1/(6+\tau)} \ll (\log\log d)^{-\tfrac{1}{6}+\tau}$ by Proposition \ref{prop:LowBdSigma}, the proof of the theorem follows.
\end{proof}
\section{Proof of Theorem \ref{thm:paucity}} \label{sec:pauc}
In this section, we present the proof of Theorem \ref{thm:paucity}. For convenience, we introduce the notation
$$
M_{d,\chi}(x) := \max_{\alpha^d = 1} |\{n \leq x : \chi(n) = \alpha\}|.
$$
We begin by establishing a couple of lemmas.
\begin{lem}\label{lem:passtoPow}
We have 
$$
M_{d,\chi}(x) \leq \min_{r|d} M_{r,\chi^{d/r}}(x).
$$
\end{lem}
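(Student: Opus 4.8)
The plan is to reduce everything to the elementary set inclusion already flagged in the discussion of the proof strategy: if $b \mid d$, $\alpha^d = 1$ and $\beta := \alpha^b$, then
$$
\{n \leq x : \chi(n) = \alpha\} \subseteq \{n \leq x : \chi^b(n) = \beta\},
$$
since the condition $\chi(n) = \alpha$ forces $\chi^b(n) = \alpha^b = \beta$.

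First I would fix a divisor $r \mid d$ and set $b := d/r$. Given any $d$th root of unity $\alpha$, put $\beta := \alpha^{d/r}$; then $\beta^r = \alpha^d = 1$, so $\beta$ is an $r$th root of unity, and it is a legitimate index in the maximum defining $M_{r,\chi^{d/r}}(x)$. Applying the inclusion above and then the definition of $M_{r,\chi^{d/r}}(x)$ gives
$$
|\{n \leq x : \chi(n) = \alpha\}| \leq |\{n \leq x : \chi^{d/r}(n) = \beta\}| \leq M_{r,\chi^{d/r}}(x).
$$
Taking the maximum over all $\alpha$ with $\alpha^d = 1$ on the left yields $M_{d,\chi}(x) \leq M_{r,\chi^{d/r}}(x)$, and since $r \mid d$ was arbitrary, taking the infimum over $r \mid d$ gives the stated bound.

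There is no real obstacle here: the lemma is essentially a bookkeeping step, and its content lies entirely in how the freedom in the choice of $r$ is exploited later (for instance, passing to a power $\chi^{d/r}$ whose order $r$ has a large least prime factor). The inclusion can of course be strict, so the reduction genuinely loses information at the level of sets, but nothing beyond the trivial containment is required for the inequality as stated.
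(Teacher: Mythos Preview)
Your proof is correct and follows essentially the same approach as the paper: fix $r \mid d$, pass from $\alpha$ to $\beta = \alpha^{d/r}$, use the trivial inclusion of level sets, and bound by $M_{r,\chi^{d/r}}(x)$. The only cosmetic difference is that the paper first selects the maximising $\alpha$ and then applies the inclusion, whereas you prove the inequality for every $\alpha$ before taking the maximum; this is immaterial.
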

\begin{proof}
It clearly suffices to prove that $M_{d,\chi}(x) \leq M_{r,\chi^{d/r}}(x)$ for each $r|d$. Thus, fix $r | d$ and let $\psi := \chi^{d/r}$. Let $\alpha$ be a $d$th order root of unity for which
$$
M_{d,\chi}(x) = |\{n \leq x : \chi(n) = \alpha\}|,
$$
and let $\beta := \alpha^{d/r}$. Then $\psi(n) = \beta$ whenever $\chi(n) = \alpha$. Since $\beta$ is a root of unity of order $r$, we obtain
$$
M_{d,\chi}(x) \leq |\{n \leq x : \psi(n) = \beta\}| \leq M_{r,\psi}(x),
$$
as claimed.
\end{proof}
\begin{lem}\label{lem:ETL2Alt}
Let $\chi$ be a character modulo $q$ of order $d$, and let $K \geq 1$. Then
$$
M_{d,\chi}(x) \leq \min\left\{\left(\frac{1}{d} \sum_{1 \leq \ell \leq d} |S_{\chi^\ell}(x)|^2\right)^{\tfrac{1}{2}}, \, \frac{x}{d} + \frac{x}{K+1} + \frac{2}{3} \sum_{1 \leq k \leq K} \frac{|S_{\chi^k}(x)|}{k}\right\}.
$$
\end{lem}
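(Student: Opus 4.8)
The plan is to establish the two bounds inside the minimum separately. Since $M_{d,\chi}(x)$ is the maximum over $d$th roots of unity $\alpha$ of $N(\alpha) := |\{n \le x : \chi(n) = \alpha\}|$, it suffices to bound each $N(\alpha)$ by each of the two quantities.

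For the $L^2$ bound I would start from the orthogonality identity $S_{\chi^\ell}(x) = \sum_{\alpha^d = 1} \alpha^\ell N(\alpha)$, which holds because $\chi(n)$ vanishes unless $(n,q)=1$ and is a $d$th root of unity otherwise. Fourier inversion on $\mb{Z}/d\mb{Z}$ then gives $N(\beta) = \tfrac{1}{d}\sum_{1 \le \ell \le d}\bar\beta^\ell S_{\chi^\ell}(x)$ for every $d$th root of unity $\beta$, and the Cauchy-Schwarz inequality immediately yields $N(\beta) \le \bigl(\tfrac{1}{d}\sum_{1\le\ell\le d}|S_{\chi^\ell}(x)|^2\bigr)^{1/2}$. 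Taking the maximum over $\beta$ gives the first bound; this part is entirely routine.

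For the second bound I would pass to the set $\mc{N} := \{n \le x : (n,q) = 1\}$ (so that $|\mc{N}| \le x$), write $\chi(n) = e(\theta_n)$ with $\theta_n$ a multiple of $1/d$ lying in $[0,1)$ for each $n \in \mc{N}$, and observe that for $\alpha = e(a/d)$ with $0 \le a < d$ one has the \emph{exact} identity $N(\alpha) = |\{n \in \mc{N} : \theta_n \in [a/d, (a+1)/d)\}|$, since $a/d$ is the unique multiple of $1/d$ in that half-open arc. Applying the Erd\H{o}s-Tur\'{a}n inequality \cite[I.6.15]{Ten} to the sequence $(\theta_n)_{n \in \mc{N}}$ with this arc (of length $1/d$) and truncation parameter $K$ bounds $\bigl|N(\alpha) - |\mc{N}|/d\bigr|$ by $|\mc{N}|/(K+1) + \tfrac{2}{3}\sum_{1 \le k \le K}\tfrac{1}{k}\bigl|\sum_{n\in\mc{N}}e(k\theta_n)\bigr|$. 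One then identifies $\sum_{n \in \mc{N}}e(k\theta_n) = \sum_{n\le x}\chi^k(n) = S_{\chi^k}(x)$ (the omitted terms contribute $\chi^k(n)=0$), uses $|\mc{N}| \le x$ in the first two terms, and takes the maximum over $\alpha$.

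The only step requiring any care is the invocation of the Erd\H{o}s-Tur\'{a}n inequality: one must use its formulation for half-open intervals so that the discrete point set $(\theta_n)$, which sits precisely at the endpoints $a/d$ of the relevant arcs, is counted correctly, and one must match the exponential sums appearing there with the character sums $S_{\chi^k}(x)$. Neither point is substantive, so I do not expect a genuine obstacle here.
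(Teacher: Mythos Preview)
Your proposal is correct and follows essentially the same route as the paper: for the $L^2$ bound the paper uses the equivalent formulation $N(\alpha)^2 \leq \sum_{\beta} N(\beta)^2 = \tfrac{1}{d}\sum_\ell |S_{\chi^\ell}(x)|^2$ (Parseval) rather than Fourier inversion plus Cauchy--Schwarz, and for the second bound the paper likewise reduces to the Erd\H{o}s--Tur\'{a}n inequality \cite[I.6.15]{Ten} applied to the sequence $(\theta_n)_{n\in\mc{N}}$, identifying the exponential sums with $S_{\chi^k}(x)$ exactly as you do.
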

\begin{proof}
We prove each of the above bounds in sequence. The proof of the first, which is already invoked in \cite[Sec. 3.2]{ManHighOrd}, is as follows. Given any $d$th root of unity $\alpha$ we have
\begin{align}\label{eq:L2toLevel}
|\{n \leq x: \chi(n) = \alpha\}| = |\{n,m \leq x: \chi(n) = \chi(m) = \alpha\}|^{1/2} &\leq |\{n,m \leq x: \chi(n) = \chi(m)\}|^{1/2} \nonumber \\
&= \left(\frac{1}{d}\sum_{\ell = 0}^{d-1} |S_{\chi^\ell}(x)|^2 \right)^{1/2}.
\end{align}
Maximising over $\alpha$, the first alternative bound for $M_{d,\chi}(x)$ holds. \\
For the second bound,
set $N := |\{n \leq x : (n,q) = 1\}$ and for each $(n,q) = 1$ write $\chi(n) = e(\theta_n)$ for some $\theta_n \in [0,1]$. Specifying that $\chi(n) = \alpha = e(a/d)$ is equivalent to  
$$
\theta_n  \in \left[\frac{a}{d}, \frac{a+1}{d}\right).
$$
By the Erd\H{o}s-Tur\'{a}n inequality (in the form given in \cite[I.6.15]{Ten}), for any $K \geq 1$ we have
\begin{align} \label{eq:ErdTurtoLevel}
|\{n \leq x : \chi(n) = \alpha\}| &\leq \frac{N}{d} + \sup_{I \subseteq [0,1)} \left|\, |\{n \leq x : \, (n,q) = 1, \, \theta_n \in I\}| - |I|N \,\right| \nonumber\\
&\leq N\left(\frac{1}{d} + \frac{1}{K+1} + \frac{2}{3}\sum_{1\leq k \leq K} \frac{1}{k} \left|\frac{1}{N} \sum_{\ss{1 \leq n \leq x \\ (n,q) = 1}} e(k\theta_n)\right|\right) \nonumber \\
&\leq x\left(\frac{1}{d} + \frac{1}{K+1} + \frac{2}{3} \sum_{1 \leq k \leq K} \frac{1}{k}\left|\frac{1}{x} \sum_{n \leq x} \chi(n)^k\right|\right).
\end{align}
This implies the claim.
\end{proof}
\begin{proof}[Proof of Theorem \ref{thm:paucity}]
Let $c_1 > 0$ be a sufficiently small constant to be determined later. Given $1 \leq z \leq \log\log d$, write 
$$
\delta_z := \max\left\{\left(\frac{\log\log(ed_z)}{c_1 \log(ed_z)}\right)^{1/2}, (\log q)^{-c_1}\right\}, \quad d_z := \prod_{\ss{p^k||d \\ p > z}} \text{ and } r_z := d/d_z.
$$
By Lemma \ref{lem:passtoPow} we have
$$
M_{d,\chi}(x) \leq \min_{r|d} M_{r,\chi^{d/r}}(x) \leq \min_{\ss{1 \leq z \leq \log\log d \\ x > q^{\delta_z}}} M_{d_z, \chi^{r_z}}(x).
$$
For ease of notation, we write $\tilde{d} := d_{z_0}$ and $\tilde{\chi} := \chi^{r_{z_0}}$, where $1\leq z_0 \leq \log\log d$ minimises this latter upper bound, subject to the condition that $x > q^{\delta_{z_0}}$. In the remainder of the proof, we show that
\begin{equation}\label{eq:redtoz0}
M_{\tilde{d},\tilde{\chi}}(x) \leq x\left(\frac{1}{P^-(\tilde{d})} + O\left(\frac{1}{(\log\log \tilde{d})^c}\right)\right),
\end{equation}
and since $P^-(\tilde{d}) > z_0$ this provides the required upper bound. \\
Similarly to the proof of Theorem \ref{thm:impShort}, let $\e := \Sigma_{\chi}(x)^{-1/7}$
\ \\
\underline{Case 1:} Assume first that 
$\phi(q)/q < \frac{1}{(\log d)^{1/10}}$. Picking $K := \lfloor (\log d)^{1/10}\rfloor$ in \eqref{eq:ErdTurtoLevel}, together with the uniform upper bound
$$
\left|S_{\tilde{\chi}^k}(x)\right| \leq \sum_{\ss{n \leq x \\ (n,q) = 1}} 1 \ll \frac{\phi(q)}{q}x \leq \frac{x}{(\log d)^{1/5}}
$$
(using \cite{Hall} as before), we deduce that 
$$
M_{\tilde{d},\tilde{\chi}}(x) \leq x\left(\frac{1}{\tilde{d}} + \e + O\left(\frac{x}{(\log d)^{1/5}} \sum_{1 \leq k \leq (\log d)^{1/10}} \frac{1}{k}\right)\right) 
\ll \frac{x\log\log d}{(\log \tilde{d})^{1/5}},
$$
which is more than enough suffices for the bound we seek. In the remaining cases we shall assume that $\phi(q)/q > \frac{1}{(\log d)^{1/10}}$, and in particular by Proposition \ref{prop:LowBdSigma}, $\Sigma_{\chi}(x) > c \log\log d$ for some absolute constant $c > 0$.  We assume this lower bound henceforth.\\
\\
\underline{Case 2:} Assume next that $|\mc{C}_{\tilde{d}}(\e)| \leq \e \tilde{d}$. By \eqref{eq:L2toLevel}, we have
\begin{align*}
M_{\tilde{d}, \tilde{\chi}}(x) &\leq \left(\frac{1}{\tilde{d}} \sum_{\ell \notin \mc{C}_{\tilde{d}}(\e)} |S_{\tilde{\chi}^\ell}(x)|^2 + \frac{1}{\tilde{d}}\sum_{\ell \in \mc{C}_{\tilde{d}}(\e)}|S_{\tilde{\chi}^\ell}(x)|^2\right)^{1/2} \\
&\leq \left(\e^2 x^2 + x^2\frac{|\mc{C}_{\tilde{d}}(\e)|}{\tilde{d}}\right)^{1/2} \leq 2\sqrt{\e} x \ll \frac{x}{(\log\log \tilde{d})^{1/14}},
\end{align*}
which is more than sufficient. \\
\ \\
\underline{Case 3:} Assume $|\mc{C}_{\tilde{d}}(\e)| > \e \tilde{d}$ and
$\e^{-7} = \Sigma_{\tilde{\chi}}(x) \geq c \log\log \tilde{d}$. 
We consider several subcases below. \\
\ \\
\underline{Case 3.(i):} Assume that $g > 1$ in Proposition 3.1. Since $g|\tilde{d}$, we have $g \geq P^-(\tilde{d})$, and as $\mc{C}_{\tilde{d}}(\e) \subseteq \{g\ell : 1 \leq \ell < \tilde{d}/g\}$ we have that $|S_{\tilde{\chi}^k}(x)| \leq \e x$ for all $1 \leq k < g$. Applying \eqref{eq:ErdTurtoLevel} with $K = g-1$ and recalling that $g \leq \e^{-1}$, we get
$$
M_{\tilde{d},\tilde{\chi}}(x) \leq x\left(\frac{1}{\tilde{d}} + \frac{1}{g} + O\left(\e \log g\right)\right) \leq x\left(\frac{1}{P^-(\tilde{d})} + O\left(\frac{\log\log\log \tilde{d}}{(\log\log \tilde{d})^{1/7}}\right)\right),
$$
which implies the bound \eqref{eq:redtoz0} in this case.
We will assume for the remaining subcases that $g = 1$.\\
\, \\
\underline{Case 3.(ii)} If $|S_{\tilde{\chi}}(x)| > \e x$ then as $x > q^{\delta_{z_0}}$, Theorem \ref{thm:impShort} immediately implies that
$$
\frac{1}{\tilde{d}} \sum_{\ell = 0}^{\tilde{d}-1} |S_{\tilde{\chi}^\ell}(x)|^2 \ll \frac{x}{(\log\log \tilde{d})^{1/7}},
$$
and the claimed bound (with $c = 1/14$) follows from \eqref{eq:L2toLevel}.  \\
\ \\
\underline{Case 3. (iii)} If $|S_{\tilde{\chi}}(x)| \leq \e x$ and (recalling $g = 1$) $\max_{1 \leq j \leq \tilde{d}-1} \sg_j(x) \leq \eta \Sigma_{\tilde{\chi}}(x)$ then we obtain a contradiction to $|\mc{C}_{\tilde{d}}(\e)| > \e \tilde{d}$ as in the proof of Theorem \ref{thm:impShort} (as this part of the argument did not require $|S_{\tilde{\chi}}(x)| > \e x$). We deduce that $|\mc{C}_{\tilde{d}}(\e)| \leq \e \tilde{d}$, and the claim follows from Case 2 above. \\
\ \\
\underline{Case 3.(iv)} Assume that $|S_{\tilde{\chi}}(x)| \leq \e x$, $\sg_{j_0}(x) = \max_{1 \leq j \leq \tilde{d}-1} \sg_j(x) > \eta \Sigma_{\tilde{\chi}}(x)$ and (as $r = \tilde{d}/g = \tilde{d}$) $(j_0,\tilde{d}) \leq \theta \tilde{d}$, noting that here $\theta > \e^{1/2}$. From this, we deduce by Proposition \ref{prop:discBound}  
that there exists an $\ell$ such that $\|j_0\ell/\tilde{d}\| > \theta/m$. Following the proof of Theorem \ref{thm:impShort}, this was enough to deduce a contradiction to the assumption $|\mc{C}_{\tilde{d}}(\e)| > \e \tilde{d}$, and so the claim of the current proposition follows once again from Case 2. \\
\ \\
\underline{Case 3. (v)} Finally, we assume the following data:
\begin{itemize}
\item $g = 1$, so $r = \tilde{d}$
\item $|S_{\tilde{\chi}}(x)| \leq \e x$, 
\item $\sg_{j_0}(x) > \eta \Sigma_{\tilde{\chi}}(x)$
\item $\Sigma_{\tilde{\chi}}(x) > c \log\log \tilde{d}$
\item $(j_0,\tilde{d}) > \theta \tilde{d}$.
\end{itemize}
Set $R := \tilde{d}/(j_0,\tilde{d}) < \theta^{-1}$, and note that for each $1 \leq k \leq R-1$ we have
$$
\left\|\frac{j_0k}{\tilde{d}}\right\| = \left\|\frac{k (j_0/(j_0,\tilde{d}))}{R} \right\| \geq \frac{1}{R},
$$
since $j/(j_0,\tilde{d})$ is coprime to $R$. It follows from equation \eqref{eq:toFE} that for every $1 \leq k \leq R-1$,
$$
\mb{D}(\tilde{\chi}^k,1;x)^2 \geq 8 \sum_{1 \leq j \leq \tilde{d}-1} \left\|\frac{j k}{\tilde{d}}\right\|^2 \sg_j(x) \geq 8\left\|\frac{j_0k}{\tilde{d}}\right\|^2 \sg_{j_0}(x) > \frac{8\eta}{R^2} \Sigma_{\tilde{\chi}}(x)
\geq 8 \eta \theta^2 \Sigma_{\tilde{\chi}}(x).
$$
Now, either (a) $k \notin \mc{C}_{\tilde{d}}(\e)$, or else (b) $k \in m\mc{C}_{\tilde{d}}(\e)$. In case (a) we immediately have $|S_{\tilde{\chi}^k}(x)| \leq \e x$. Thus, assume that (b) holds.
From the proof of Proposition 3.1, if $t_k$ is a minimiser for $\mb{D}(\tilde{\chi}^k,n^{it};x)$ from $[-2m/\e^2,2m/\e^2]$ then
$|t_k|\log x \leq 3\e^{-64m^2}$, and therefore
$$
\mb{D}(1,n^{it_k};x)^2 = \log(1+|t_k|\log x) + O(1) \leq 64m^2 \log(1/\e) + O(1) \leq 64 \e^{-4} \log(1/\e) + O(1).
$$
Now, by the pretentious triangle inequality we have
\begin{align*}
\mb{D}(\tilde{\chi}^k,n^{it_k};x) &\geq \mb{D}(\tilde{\chi}^k,1;x) - \mb{D}(1,n^{it_{k}};x). 
\end{align*}
Given the bound $\eta,\theta > \e^{1/8}$, we have 
$$
\mb{D}(\tilde{\chi}^k,1;x) > \left(8 \eta \theta^2\right)^{1/2} \Sigma_{\tilde{\chi}}(x)^{1/2} > 2 \sqrt{2} \e^{3/16 - 7/2} \geq 2\sqrt{2}\e^{-3} > 20 \e^{-2} \log(1/\e) \geq 2 \mb{D}(1,n^{it_{k}};x).
$$
whenever $\tilde{d}$ is sufficiently large. Hence we have
$$
\mb{D}(\tilde{\chi}^k,n^{it_k};x)^2 \geq \frac{1}{4} \mb{D}(\tilde{\chi}^{k},1;x)^2 > 2\eta \theta^2 \Sigma_{\tilde{\chi}}(x) > 2c\e^{-6}. 
$$
Applying Hal\'{a}sz' theorem (with $T = m/\e^2$), we obtain, for each $1 \leq k < R$,
$$
|S_{\tilde{\chi}^k}(x)| \ll x\left(\e^2 + \mb{D}(\tilde{\chi}^k,n^{it_k};x)^2 e^{-\mb{D}(\tilde{\chi}^k,n^{it_k};x)^2}\right) \ll x\left(\e^2 + \e^{-6}\exp\left(-c\e^{-6}\right)\right) \ll \e^2 x.
$$
It follows that in either of cases (a) and (b), $|S_{\chi^k}(x)| \ll \e x$ for all $1 \leq k \leq R-1$. \\
Applying this in \eqref{eq:ErdTurtoLevel} with $K = R \leq \theta^{-1}$, we get
$$
M_{\tilde{d}, \tilde{\chi}}(x) \leq x\left(\frac{1}{\tilde{d}} + \frac{1}{R} + O\left(\e^2 \log R\right)\right) \leq x\left(\frac{1}{R} + O\left(\e \log(1/\e)\right)\right).
$$
Since $j_0 < \tilde{d}$, we have $(j_0,\tilde{d}) \leq \tilde{d}/P^-(\tilde{d})$, and therefore as $\e \gg (\log\log \tilde{d})^{-1/6}$, we obtain
$$
M_{\tilde{d}, \tilde{\chi}}(x) \leq x\left(\frac{1}{P^-(\tilde{d})} + O\left(\frac{\log\log\log \tilde{d}}{(\log\log \tilde{d})^{1/7}}\right)\right).
$$
\underline{Conclusion:} Summarising all of the above cases, we obtain, for some $c \in (0,1/14]$,
$$
M_{\tilde{d}, \tilde{\chi}}(x) \leq x\left(\frac{1}{P^-(\tilde{d})} + O\left(\frac{1}{(\log\log \tilde{d})^c}\right)\right),
$$
as claimed.
\end{proof}

\begin{proof}[Proof of Corollary \ref{cor:paucity}]
If $z \in (P^+(d) - 1, P^+(d))$ then $d_z \geq P^+(d)$. Taking $z \ra P^+(d)$ from below, and letting $\delta = \delta_{P^+(d)-1/(\log q)}$, we find that when $x > q^{\delta}$,
$$
\max_{\alpha^d = 1} \frac{1}{x}|\{n \leq x : \chi(n) = \alpha\}| \leq \left(\frac{1}{P^+(d)} + O\left(\frac{1}{(\log\log d_z)^{c_2}}\right)\right) \ll \frac{1}{(\log\log P^+(d))^{c_2}},
$$
as claimed.
\end{proof}
\section{Averaged Maximal Character Sums: Proof of Theorem \ref{thm:impMax}} \label{sec:max}
Our strategy towards proving Theorem \ref{thm:impMax} will be similar to the proof of Theorem \ref{thm:impShort}. Given $\e > 0$, recall that
$$
\mc{L}_d(\e) := \{1 \leq \ell \leq d : M(\chi^\ell) > \e \sqrt{q}\log q\}.
$$
Assuming that $|\mc{L}_d(\e)| \geq \e d$, Proposition \ref{prop:maxStruc} shows that there are positive integers $m \leq \e^{-2}$, $g \leq \e^{-1}$ and $k \leq e^{-3m}$, and a Dirichlet character $\xi \pmod{k}$ of order dividing $r = d/g$ such that
$$
\mc{L}_d(\e) \subseteq \{g \ell : 1 \leq \ell \leq r\} = m \mc{L}_d(\e),
$$
and furthermore
$$
\max_{1 \leq \ell \leq r} \mb{D}(\chi^{g\ell}, \xi^\ell;x)^2 \ll m^2 \log(1/\e).
$$
Set $\psi :=\chi^g\bar{\xi}$, so that $\psi^r$ is principal. Write $\omega = e(1/r)$ and put
\begin{align*}
\Sigma_{\psi}(q) = \sum\limits_{1 \leq j \leq r-1}\tilde{\sigma}_j(q), \text{ where } \tilde{\sigma}_j(q) = \sum\limits_{\substack{p \leq q,\\ \psi(p) = \omega^j}}\frac{1}{p}.
\end{align*}
We will study the influence of the sizes of the prime sums $\tilde{\sigma}_j(q)$ on the maximal sums $M(\chi^\ell)$. Throughout this section, fix $\eta \in (0, 1)$ to be a small parameter.

\subsection{Small $\tilde{\sigma}_j$ case}
We assume first of all that $\tilde{\sg}_j(q) \leq \tfrac{\eta}{g}\Sigma_{\psi}(q)$ for all $1 \leq j \leq r$.
\begin{prop}
Let $\eta > 0$ be sufficiently small, and suppose $\tilde{\sigma}_j(q) \leq \frac{\eta}{g}\Sigma_{\psi}(q)$ for all $1 \leq j \leq r-1$. 
Then there are elements $1\leq \ell \leq r$ such that 
\begin{align*}
    \mb{D}(\chi^{g\ell}, \xi^\ell; q)^2 \geq \frac{1}{2}\Sigma_{\psi}(q).
\end{align*}
\end{prop}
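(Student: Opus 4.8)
The plan is to follow the proof of Proposition~\ref{prop:smallSig} essentially line by line, with $\chi^{g\ell}$ replaced by $\psi^\ell=\chi^{g\ell}\bar{\xi}^\ell$, all quantities evaluated at the scale $q$ rather than $x$, and the fibred sums $\sg_j(x)$ replaced by $\tilde{\sigma}_j(q)$. The first step is to record the exact identity
$$
\mb{D}(\chi^{g\ell},\xi^\ell;q)^2=\mb{D}(\psi^\ell,1;q)^2\qquad(1\le\ell\le r),
$$
which holds termwise: for $p\nmid[q,k]$ one has $\chi^{g\ell}(p)\overline{\xi^\ell(p)}=\psi^\ell(p)$, while for $p\mid[q,k]$ both $\xi^\ell(p)$ and $\psi(p)$ vanish, so each side contributes $1/p$ at such $p$. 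Since $\psi^r$ is principal, for $p\nmid[q,k]$ we have $\psi(p)=\omega^j$ for a unique $0\le j\le r-1$, the terms with $j=0$ contribute nothing, and discarding the (nonnegative) contribution of the primes dividing $[q,k]$ together with the elementary inequality $1-\cos(2\pi t)\ge 8\|t\|^2$ gives the analogue of \eqref{eq:toFE}:
$$
\mb{D}(\chi^{g\ell},\xi^\ell;q)^2\;\ge\;\sum_{1\le j\le r-1}\bigl(1-\cos(2\pi j\ell/r)\bigr)\tilde{\sigma}_j(q)\;\ge\;8\sum_{1\le j\le r-1}\left\|\frac{j\ell}{r}\right\|^2\tilde{\sigma}_j(q).
$$

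Next I would introduce the restricted variance
$$
\Delta:=\frac{1}{r}\sum_{1\le\ell\le r}\left(\sum_{1\le j\le r-1}\left\|\frac{j\ell}{r}\right\|^2\tilde{\sigma}_j(q)-\frac{1}{12}\Sigma_\psi(q)\right)^2
$$
and carry out the Fourier-analytic computation of Proposition~\ref{prop:smallSig}: expand the square, insert the Fourier expansion of $t\mapsto\|t\|^2$, evaluate the resulting complete exponential sums modulo $r$, split according to the divisors $(j_1,r),(j_2,r)$ of $r$, truncate the $u_1,u_2$-series at height $Mr/\lambda$ with $M=\eta^{-1}$, and estimate the main term using the hypothesis $\tilde{\sigma}_j(q)\le\frac{\eta}{g}\Sigma_\psi(q)=\frac{r\eta}{d}\Sigma_\psi(q)$ precisely where $\sg_j(x)\le\frac{r\eta}{d}\Sigma_\chi(x)$ was used there. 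This produces $\Delta\ll\eta\,\Sigma_\psi(q)^2$. An application of Chebyshev's inequality then shows that, with $O(\eta^{1/2}\log(1/\eta)\,r)$ exceptions $\ell$,
$$
\mb{D}(\chi^{g\ell},\xi^\ell;q)^2\ge 8\sum_{1\le j\le r-1}\left\|\frac{j\ell}{r}\right\|^2\tilde{\sigma}_j(q)\ge\left(\frac{2}{3}+O\bigl((\log(1/\eta))^{-1/2}\bigr)\right)\Sigma_\psi(q),
$$
which exceeds $\tfrac12\Sigma_\psi(q)$ once $\eta$ is small; since the exceptional set then has fewer than $r$ elements, an admissible $\ell$ exists, proving the proposition.

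I do not expect a genuine obstacle: the only structural novelty relative to Proposition~\ref{prop:smallSig} is the twisting character $\xi$, and the identity in the first step removes it at no cost (and with no correction term, since $\psi(p)=0$ exactly when $\xi(p)=0$). The most delicate part is purely bookkeeping — verifying that the divisor sums over $e_1,e_2,\lambda\mid r$ appearing in the expansion of $\Delta$ (in particular the analogues of the bounds for $\mc{R}_1$, $\mc{R}_2$ and for the truncation error $\mc{E}_M$) behave as in the $\mc{C}_d(\e)$ case when the index $j$ now ranges over $1\le j\le r-1$ and the order of $\psi$ is $r$ rather than $d$ — but these are the same manipulations already performed there, with $r$ playing the former role of $d$ in the range of $j$, so no new idea is required.
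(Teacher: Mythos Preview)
Your proposal is correct and matches the paper's approach: the paper simply records the identity $\mb{D}(\chi^{g\ell},\xi^\ell;q)=\mb{D}(\psi^\ell,1;q)$ and then applies Proposition~\ref{prop:smallSig} directly to $\psi$ (with the role of $g$ there taken to be $1$, so that $\eta/g$ serves as the new small parameter), which amounts exactly to the line-by-line rerun you describe. Your observation that the index $j$ now ranges only up to $r-1$ rather than $d-1$ is the sole bookkeeping difference, and as you note it only simplifies the divisor manipulations.
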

\begin{proof}
Since $\psi = \chi^g \bar{\xi}$ has order dividing $r$ and
$$
\mb{D}(\chi^{g\ell},\xi^\ell;q) = \mb{D}(\psi^\ell,1;q)
$$
for each $1 \leq \ell \leq r$, the result follows upon applying Proposition \ref{prop:smallSig} to $\psi$ in place of $\chi^g$ (as $\Sigma_{\chi}(x) \geq \Sigma_{\chi^g}(x)$ there). 
\end{proof}

\subsection{Large $\tilde{\sigma}_j$ case}
Next, we assume that $\tilde{\sg}_j(q) > \tfrac{\eta}{g} \Sigma_{\psi}(q)$ for some $j = j_0$.
\begin{prop}
Suppose there is $1 \leq j_0 \leq r-1$ such that $\tilde{\sg}_{j_0}(q) > \frac{\eta}{g}\Sigma_{\psi}(q)$. For each $1 \leq \ell \leq r$ let $1 \leq N_{\ell} \leq q$ satisfy  
$$
|L_{\psi^\ell}(N_{\ell})| = \max_{1 \leq N \leq q} |L_{\psi^\ell}(N)|.
$$ 
Then for any $1 \leq \ell \leq r$, 
\begin{align*}
    \frac{|L_{\psi^\ell}(N_\ell)|}{\log q} \ll \min\Bigg\{1, \Big\|\frac{j_0g\ell}{d}\Big\|^{-1}\frac{\log N_\ell}{(\log q)\sqrt{\tilde{\sg}_{j_0}(N_\ell)}}\Bigg\}.
\end{align*}
\end{prop}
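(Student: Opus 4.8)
The plan is to establish the twisting relation indicated around \eqref{eq:logsum}: by isolating the single most frequent prime level set $\{p : \psi(p)=\omega^{j_0}\}$ of $\psi$, one shows that $L_{\psi^\ell}(N_\ell)$ is essentially invariant under multiplication by $e(j_0g\ell/d)$, and hence small unless $\|j_0g\ell/d\|$ is tiny. Fix $1\leq\ell\leq r$ and recall $\omega=e(1/r)$, so that $\psi^\ell(p)=\omega^{j_0\ell}=e(j_0g\ell/d)$ whenever $\psi(p)=\omega^{j_0}$, and $|1-\omega^{j_0\ell}|=2|\sin(\pi j_0\ell/r)|\geq 4\|j_0g\ell/d\|$. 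Since $|L_{\psi^\ell}(N_\ell)|\leq\sum_{n\leq N_\ell}1/n\leq 1+\log N_\ell\ll\log q$ in all cases, the first entry of the minimum is automatic and the bound is vacuous when $r\mid j_0\ell$; moreover, once the degenerate regimes where $N_\ell$ or $\tilde{\sigma}_{j_0}(N_\ell)$ is below an absolute constant are disposed of by the trivial estimate $|L_{\psi^\ell}(N_\ell)|\leq 1+\log N_\ell$, it suffices to prove $|L_{\psi^\ell}(N_\ell)|\ll\|j_0g\ell/d\|^{-1}(\log N_\ell)\,\tilde{\sigma}_{j_0}(N_\ell)^{-1/2}$ under the assumptions $r\nmid j_0\ell$, $N_\ell$ large, and $\tilde{\sigma}_{j_0}(N_\ell)\geq 1$.

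First I would introduce the completely additive function $\Omega_{j_0}(n):=\sum_{p^k\mid n,\,\psi(p)=\omega^{j_0}}1$, which satisfies $\Omega_{j_0}(p^a)=a\cdot 1_{\psi(p)=\omega^{j_0}}$ and has average value $\sum_{p\leq N,\,\psi(p)=\omega^{j_0}}1/p=\tilde{\sigma}_{j_0}(N)$, and evaluate $S:=\sum_{n\leq N_\ell}\psi^\ell(n)\Omega_{j_0}(n)/n$ in two ways. Interchanging summation and writing $n=p^km$ (by complete multiplicativity of $\psi^\ell$) gives $S=\sum_{p:\,\psi(p)=\omega^{j_0}}\sum_{k\geq 1}(\omega^{j_0\ell})^kp^{-k}L_{\psi^\ell}(N_\ell/p^k)$; the terms with $k\geq 2$ contribute $O(\log N_\ell)$ because $\sum_p\sum_{k\geq 2}p^{-k}=O(1)$ and $|L_{\psi^\ell}|\ll\log N_\ell$, while for $k=1$ the triangle-inequality estimate $L_{\psi^\ell}(N_\ell/p)=L_{\psi^\ell}(N_\ell)+O(\log p)$ and Mertens' theorem ($\sum_{p\leq N_\ell}(\log p)/p\ll\log N_\ell$) give $S=\omega^{j_0\ell}\,\tilde{\sigma}_{j_0}(N_\ell)\,L_{\psi^\ell}(N_\ell)+O(\log N_\ell)$. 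On the other hand, splitting $\Omega_{j_0}(n)=\tilde{\sigma}_{j_0}(N_\ell)+(\Omega_{j_0}(n)-\tilde{\sigma}_{j_0}(N_\ell))$ gives $S=\tilde{\sigma}_{j_0}(N_\ell)\,L_{\psi^\ell}(N_\ell)+E$ with $|E|\leq\sum_{n\leq N_\ell}|\Omega_{j_0}(n)-\tilde{\sigma}_{j_0}(N_\ell)|/n$. Comparing the two evaluations, $(1-\omega^{j_0\ell})\,\tilde{\sigma}_{j_0}(N_\ell)\,L_{\psi^\ell}(N_\ell)=-E+O(\log N_\ell)$, so $|L_{\psi^\ell}(N_\ell)|\ll(|E|+\log N_\ell)\big/\big(\|j_0g\ell/d\|\,\tilde{\sigma}_{j_0}(N_\ell)\big)$.

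It then remains to bound $|E|\ll(\log N_\ell)\,\tilde{\sigma}_{j_0}(N_\ell)^{1/2}$, which, since $\tilde{\sigma}_{j_0}(N_\ell)\geq 1$, completes the proof. By Cauchy--Schwarz, $|E|\leq\big(\sum_{n\leq N_\ell}1/n\big)^{1/2}V^{1/2}\ll(\log N_\ell)^{1/2}V^{1/2}$ where $V:=\sum_{n\leq N_\ell}|\Omega_{j_0}(n)-\tilde{\sigma}_{j_0}(N_\ell)|^2/n$ is a logarithmically weighted variance of $\Omega_{j_0}$. The input I need is the logarithmic Tur\'{a}n--Kubilius bound $V\ll(\log N_\ell)\,(\tilde{\sigma}_{j_0}(N_\ell)+1)$; I would deduce this from the classical Tur\'{a}n--Kubilius inequality $\sum_{n\leq t}|\Omega_{j_0}(n)-\tilde{\sigma}_{j_0}(t)|^2\ll t(\tilde{\sigma}_{j_0}(t)+1)$ by partial summation, the one subtle point being that the fixed centre $\tilde{\sigma}_{j_0}(N_\ell)$ differs from the scale-$t$ mean $\tilde{\sigma}_{j_0}(t)$: the extra contribution this creates is $\ll\int_1^{N_\ell}(\tilde{\sigma}_{j_0}(N_\ell)-\tilde{\sigma}_{j_0}(t))^2\,dt/t$, which, on expanding the square as a double sum over the relevant primes and integrating, is $\ll\tilde{\sigma}_{j_0}(N_\ell)\sum_{p\leq N_\ell,\,\psi(p)=\omega^{j_0}}(\log p)/p\ll(\log N_\ell)\,\tilde{\sigma}_{j_0}(N_\ell)$. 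Combining everything, $|E|\ll(\log N_\ell)\,(\tilde{\sigma}_{j_0}(N_\ell)+1)^{1/2}\ll(\log N_\ell)\,\tilde{\sigma}_{j_0}(N_\ell)^{1/2}$, whence $|L_{\psi^\ell}(N_\ell)|\ll\|j_0g\ell/d\|^{-1}(\log N_\ell)\,\tilde{\sigma}_{j_0}(N_\ell)^{-1/2}$, and together with $|L_{\psi^\ell}(N_\ell)|\ll\log q$ this gives the claimed minimum.

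The main obstacle is precisely this logarithmic second-moment estimate for $\Omega_{j_0}$ with the sharp \emph{linear} dependence on $\tilde{\sigma}_{j_0}(N_\ell)$: a quadratic dependence $\tilde{\sigma}_{j_0}(N_\ell)^2$ would lose a factor $\tilde{\sigma}_{j_0}(N_\ell)^{1/2}$ and ruin the bound, and a crude partial-summation argument that bounds $(\tilde{\sigma}_{j_0}(N_\ell)-\tilde{\sigma}_{j_0}(t))^2$ by its supremum $\asymp(\log\log N_\ell)^2$ is too wasteful; one must exploit that $\tilde{\sigma}_{j_0}(N_\ell)-\tilde{\sigma}_{j_0}(t)$ is small for $t$ outside a logarithmically negligible range, equivalently the cancellation of the $\tilde{\sigma}_{j_0}(N_\ell)^2$-order terms in the classical Tur\'{a}n--Kubilius computation. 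Everything else --- the two evaluations of $S$, the Lipschitz-type truncation step, Mertens, and the disposal of the degenerate regimes via the minimum --- is routine.
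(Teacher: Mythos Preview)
Your proof is correct and follows essentially the same approach as the paper: introduce $\Omega_{j_0}$, use Cauchy--Schwarz together with a logarithmic second-moment bound for $\Omega_{j_0}$ to control the deviation term, then extract the twisting relation $L_{\psi^\ell}(N_\ell)\approx\omega^{j_0\ell}L_{\psi^\ell}(N_\ell)$ via $L_{\psi^\ell}(N_\ell/p^k)=L_{\psi^\ell}(N_\ell)+O(k\log p)$ and Mertens. The only real difference is in how the logarithmic variance estimate $\sum_{n\leq N_\ell}|\Omega_{j_0}(n)-\tilde\sigma_{j_0}(N_\ell)|^2/n\ll(\log N_\ell)\,\tilde\sigma_{j_0}(N_\ell)$ is proved: the paper computes the zeroth, first, and second logarithmic moments of $\Omega_{j_0}$ directly and verifies cancellation, whereas you import the classical (Ces\`aro) Tur\'an--Kubilius inequality and pass to logarithmic weights by partial summation, handling the centre shift $\tilde\sigma_{j_0}(N_\ell)-\tilde\sigma_{j_0}(t)$ separately; both routes are standard and give the same bound.
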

\begin{proof}
For $g\ell \in \mc{L}_d(\e)$, by the trivial bound
\begin{align*}
\e \log q \ll |L_{\psi^\ell}(N_\ell)| \leq \log N_\ell,
\end{align*}
we have $N_\ell \gg q^\e$. We introduce the completely additive function
\begin{align*}
\Omega_{j_0}(n) 
= \sum\limits_{\substack{p^k | n, \\ \psi(p) = \omega^{j_0} }} 1.
\end{align*}
By the complete multiplicativity of $\psi$, we have
\begin{align} \label{eq:LpsiN}
\frac{L_{\psi^\ell}(N_\ell)}{\log q} 
&= \frac{1}{\log q}\sum\limits_{n \leq N_\ell}\frac{\psi^\ell(n)}{n} \cdot \frac{\Omega_{j_0}(n) + \tilde{\sg}_{j_0}(N_\ell)-\Omega_{j_0}(n)}{\tilde{\sg}_{j_0}(N_\ell)} \nonumber\\
&= \frac{1}{\tilde{\sg}_{j_0}(N_\ell) \log q} \sum\limits_{\substack{mp^k \leq N_\ell, \\ \psi(p) = \omega^{j_0}}}\frac{\psi^\ell(p)^k}{p^k}\frac{\psi^\ell(m)}{m} + O\Bigg(\frac{1}{\log q}\sum\limits_{n \leq N_\ell} \frac{|\Omega_{j_0}(n)/\tilde{\sg}_{j_0}(N_\ell) -1|}{n}\Bigg). 
\end{align}
We first estimate the error term above. By the Cauchy-Schwarz inequality,
\begin{align} \label{eq:CSbeforeTK}
\sum\limits_{n \leq N_\ell}\frac{|\Omega_{j_0}(n)/\tilde{\sg}_{j_0}(N_\ell) -1|}{n} \leq \Bigg(\sum\limits_{n \leq N_\ell}\frac{|\Omega_{j_0}(n)/\tilde{\sg}_{j_0}(N_\ell) -1|^2}{n}\Bigg)^{1/2}\sqrt{\log N_\ell}.
\end{align}
We will show that 
\begin{align}
\sum\limits_{n \leq N_\ell}\frac{|\Omega_{j_0}(n)/\tilde{\sg}_{j_0}(N_\ell) -1|^2}{n} = O\left(\frac{\log N_\ell}{\tilde{\sg}_{j_0}(N_\ell)}\right). \label{eq:Errterm}
\end{align}
After expanding the square on the left-hand side, to prove \eqref{eq:Errterm} it suffices to show that for $j = 0,1,2$,
\begin{align*}
    \sum\limits_{n \leq N_\ell}\frac{(\Omega_{j_0}(n)/\tilde{\sg}_{j_0}(N_\ell))^j}{n} = \log N_\ell + O\left(\frac{\log N_\ell}{\tilde{\sg}_{j_0}(N_\ell)}\right).
\end{align*}
This bound clearly holds when $j = 0$. When $j = 1$, we have
\begin{align*}
    \frac{1}{\tilde{\sg}_{j_0}(N_\ell)}\sum\limits_{n \leq N_\ell}\frac{\Omega_{j_0}(n)}{n} &= \frac{1}{\tilde{\sg}_{j_0}(N_\ell)}\sum\limits_{n \leq N_\ell}\sum\limits_{\substack{mp^k= n, \\ \psi(p) = \omega^{j_0}}}\frac{1}{mp^k} = \frac{1}{\tilde{\sg}_{j_0}(N_\ell)}\sum\limits_{\substack{p^k \leq N_\ell, \\\psi(p) = \omega^{j_0} }}\frac{1}{p^k}\sum\limits_{m \leq N_\ell/p^k}\frac{1}{m}\\
    &= \frac{1}{\tilde{\sg}_{j_0}(N_\ell)}\sum\limits_{\substack{p^k \leq N_\ell, \\\psi(p) = \omega^{j_0} }}\frac{1}{p^k}(\log N_\ell - k\log p + O(1)).
\end{align*}
Using Mertens' theorem, we find that
\begin{align*}
    \sum\limits_{\substack{p^k \leq N_\ell, \\\psi(p) =\omega^{j_0} }}\frac{k\log p}{p^k} &\leq \log N_\ell+\sum\limits_{2 \leq k \leq \log N_\ell}k\sum\limits_{2 \leq n \leq N_\ell^{1/k}}\frac{1}{n^{k-1/2}}\\
    &\ll \log N_\ell+\sum\limits_{2 \leq k \leq \log N_\ell}\frac{k}{k -3/2} \ll \log N_\ell.
\end{align*}
We also note that
$$
\sum_{\ss{p^k \leq N_\ell \\ k \geq 2 \\ \psi(p) = \omega^{j_0}}} \frac{\log N_\ell}{p^k} + \sum_{\ss{p^k \leq N_\ell \\ \psi(p) = \omega^{j_0}}} \frac{1}{p^k} \ll \log N_\ell. 
$$
Hence, we obtain
\begin{align*}
    \frac{1}{\tilde{\sg}_{j_0}(N_\ell)}\sum\limits_{n \leq N_\ell}\frac{\Omega_{j_0}(n)}{n} = \log N_\ell + O\left(\frac{\log N_\ell}{\tilde{\sg}_{j_0}(N_\ell)}\right).
\end{align*}
Finally, when $j = 2$,
\begin{align*}
\frac{1}{\tilde{\sg}_{j_0}(N_\ell)^2}\sum\limits_{n \leq N_\ell}\frac{\Omega_{j_0}(n)^2}{n} 
&= \frac{1}{\tilde{\sg}_{j_0}(N_\ell)^2}\sum\limits_{n \leq N_\ell} \Bigg(\sum\limits_{\substack{p_1^{k_1}|n, \\ \psi(p_1) = \omega^{j_0}}}\frac{1}{n}\Bigg)\Bigg(\sum\limits_{\substack{p_2^{k_2}|n, \\ \psi(p_2) = \omega^{j_0}}}\frac{1}{n}\Bigg)\\
&=\frac{1}{\tilde{\sg}_{j_0}(N_\ell)^2} \sum\limits_{\substack{p_1^{k_1}, p_2^{k_2} \leq N_\ell, \\\psi(p_1)=\psi(p_2) = \omega^{j_0} }}\sum\limits_{\substack{n \leq N_\ell, \\p_1^{k_1}, p_2^{k_2} |n}}\frac{1}{n}\\
&= \frac{1}{\tilde{\sg}_{j_0}(N_\ell)^2}\sum\limits_{\substack{[p_1^{k_1},p_2^{k_2}] \leq N_\ell, \\\psi(p_1)=\psi(p_2) = \omega^{j_0} }}\frac{1}{[p_1^{k_1}, p_2^{k_2}]} \left(\log (N_\ell/[p_1^{k_1},p_2^{k_2}]) + O(1)\right).
\end{align*}
Using Mertens' theorem, we deduce that
\begin{align*}
&\sum_{\ss{[p_1^{k_1},p_2^{k_2}] \leq N_\ell \\ \psi(p_1) = \psi(p_2) = \omega^{j_0}}} \frac{1}{[p_1^{k_1},p_2^{k_2}]} = \sum_{\ss{[p_1,p_2] \leq N_\ell \\ \psi(p_1) = \psi(p_2) = \omega^{j_0}}} \frac{1}{[p_1,p_2]} + O(\tilde{\sg}_{j_0}(N_\ell)) = \sum_{\ss{p_1,p_2 \leq N_\ell \\ \psi(p_1) = \psi(p_2) = \omega^{j_0}}} \frac{1}{[p_1,p_2]} + O(\tilde{\sg}_{j_0}(N_\ell)), \\
&\sum_{\ss{[p_1^{k_1},p_2^{k_2}] \leq N_\ell \\ \psi(p_1) = \psi(p_2) = \omega^{j_0}}} \frac{\log [p_1^{k_1},p_2^{k_2}]}{[p_1^{k_1},p_2^{k_2}]} \ll \left(\sum_{\ss{p_1^{k_1} \leq N_\ell \\ \psi(p_1) = \omega^{j_0}}} \frac{1}{p_1^{k_1}}\right) \left(\sum_{p_2^{k_2} \leq N_\ell} \frac{\log p_2^{k_2}}{p_2^{k_2}}\right) \ll \tilde{\sg}_{j_0}(N_\ell) \log N_\ell.
\end{align*}
Inserting these bounds in the previous estimates, we deduce that
\begin{align*}
\frac{1}{(\tilde{\sg}_{j_0}(N_\ell))^2}\sum\limits_{n \leq N_\ell}\frac{(\Omega_{j_0}(n))^2}{n} 
 &= \frac{\log N_\ell}{(\tilde{\sg}_{j_0}(N_\ell))^2}\sum\limits_{\substack{p_1, p_2 \leq N_\ell, \\\psi(p_1), \psi(p_2) = \omega^{j_0} }}\frac{1}{[p_1, p_2]} + O\left(\frac{\log N_\ell}{\tilde{\sg}_{j_0}(N_\ell)}\right)\\
&= \frac{\log N_\ell}{(\tilde{\sg}_{j_0}(N_\ell))^2}\sum\limits_{\substack{p_1 \\ p_2 \leq N_\ell, p_1 \neq p_2 \\\psi(p_1), \psi(p_2) = \omega^{j_0} }}\frac{1}{p_1p_2} + O\left(\frac{\log N_\ell}{\tilde{\sg}_{j_0}(N_\ell)}\right)\\
&=\log N_\ell + O\left(\frac{\log N_\ell}{\tilde{\sg}_{j_0}(N_\ell)}\right),
\end{align*}
as required. \\
Combining \eqref{eq:Errterm} with \eqref{eq:CSbeforeTK} and combining the result into \eqref{eq:LpsiN}, we find that 
\begin{align*}
\frac{L_{\psi^\ell}(N_\ell)}{\log q}&=\frac{1}{ \tilde{\sg}_{j_0}(N_\ell)\log q} \sum\limits_{\substack{p^k \leq N_\ell, \\ \psi(p) = \omega^{j_0}}}\frac{\psi^\ell(p)^k}{p^k}\sum\limits_{m \leq N_\ell /p^k}\frac{\psi^\ell(m)}{m} + O\Bigg(\frac{\log N_\ell}{(\log q)\sqrt{\tilde{\sg}_{j_0}(N_\ell)}}\Bigg)\\
&= \frac{1}{\tilde{\sg}_{j_0}(N_\ell)\log q } \sum\limits_{\substack{p^k \leq N_\ell, \\ \psi(p) = \omega^{j_0}}}\frac{\omega^{j_0 k\ell}}{p^k} L_{\psi^\ell}(N_\ell/p^k)
+ O\Bigg(\frac{\log N_\ell}{(\log q)\sqrt{\tilde{\sg}_{j_0}(N_\ell)}}\Bigg).
\end{align*}
Since whenever $p^k \leq N_\ell$ we have
\begin{align*}
L_{\psi^\ell}(N_\ell/p^k) = L_{\psi^\ell}(N_\ell) + O\left(\sum_{N_\ell/p^k < m \leq N_\ell} \frac{1}{m}\right) = L_{\psi^\ell}(N_\ell) + O(k\log p),
\end{align*}
we obtain using Mertens' theorem again that
\begin{align*}
\frac{L_{\psi^\ell}(N_\ell)}{\log q} 
&= \frac{1}{\tilde{\sg}_{j_0}(N_\ell)\log q } \sum\limits_{\substack{p^k \leq N_\ell, \\ \psi(p) = \omega^{j_0}}}\frac{\omega^{j_0k\ell}}{p^k}\Big(L_{\psi^\ell}(N_\ell) + O(k\log p)\Big) + O\Bigg(\frac{\log N_\ell}{(\log q)\sqrt{\tilde{\sg}_{j_0}(N_\ell)}}\Bigg) \\
&= \frac{\omega^{j_0\ell}L_{\psi^\ell}(N_\ell)}{\tilde{\sg}_{j_0}(N_\ell)\log q } \left(\sum\limits_{\substack{p \leq N_\ell, \\ \psi(p) = \omega^{j_0}}}\frac{1}{p}\right) + O\left(\frac{\log N_\ell}{(\log q)\sqrt{\tilde{\sg}_{j_0}(N_\ell)}}\right) \\
&= \omega^{j_0\ell}\frac{L_{\psi^\ell}(N_\ell)}{\log q}  + O\left(\frac{\log N_\ell}{(\log q)\sqrt{\tilde{\sg}_{j_0}(N_\ell)}}\right).
\end{align*}
Rearranging this expression and using the bound $|1-\omega^{j_0\ell}| \gg \|j_0\ell/r\|$, we deduce that
$$
\frac{|L_{\psi^\ell}(N_\ell)|}{\log q}  \ll \left\|\frac{j_0\ell}{r}\right\|^{-1} \frac{\log N_\ell}{(\log q) \sqrt{\tilde{\sg}_{j_0}(N_\ell)}},
$$
as claimed.
\end{proof}
In analogy to Proposition \ref{prop:discBound}, the number of $\ell$ for which  $\Big\|\frac{j_0g\ell}{d}\Big\|$ can also be estimated effectively. 
\begin{prop}
With the above notation there is an absolute constant $C > 0$ such that if
$$
\theta_0 := C\left(\frac{(mg)^2\eta^{-1}\log(1/\e)}{\log\log r}\right)^{1/2}.
$$
then at least one of the following is true:
\begin{enumerate}[(i)]
\item $|M(\chi)| \leq \e\sqrt{q} \log q$, and
\item $(j_0,r) \leq \theta_0 r$.
\end{enumerate}
Moreover, in the second case we find that for any $\theta \in [\theta_0,1/2]$,
\begin{align*}
|\{1 \leq \ell \leq r : \|j_0 \ell/r\| \leq \theta\}| \ll \theta r.  
\end{align*}
\end{prop}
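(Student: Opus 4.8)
The plan is to run the argument of Proposition~\ref{prop:discBound} almost verbatim, with $\psi = \chi^g\bar\xi$ and $M(\chi)$ playing the roles of $\chi$ and $S_\chi(x)$. First I would reduce to the case $g=1$: if alternative (i) fails then $M(\chi) = M(\chi^1) > \e\sqrt{q}\log q$, so $1 \in \mc{L}_d(\e)$; since Proposition~\ref{prop:maxStruc} puts $\mc{L}_d(\e)$ inside the subgroup $\langle g\rangle \leq \Z/d\Z$ and $g \mid d$, this forces $g = 1$, whence $r = d$ and $\psi = \chi\bar\xi$, while Proposition~\ref{prop:maxStruc} also gives $\mb{D}(\psi,1;q)^2 = \mb{D}(\chi,\xi;q)^2 \ll m^2\log(1/\e)$.

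The second step is the Fourier lower bound, carried out as in \eqref{eq:toFE}. Discarding the non-negative contribution of the primes $p \leq q$ with $\psi(p) = 0$, using $1-\cos(2\pi t) \geq 8\|t\|^2$, and recalling the hypothesis $\tilde{\sg}_{j_0}(q) > \tfrac{\eta}{g}\Sigma_\psi(q) = \eta\Sigma_\psi(q)$, one obtains
$$
m^2\log(1/\e) \gg \mb{D}(\psi,1;q)^2 \geq 8\sum_{1\le j\le r-1}\left\|\frac{j}{r}\right\|^2\tilde{\sg}_j(q) \geq 8\left\|\frac{j_0}{r}\right\|^2\tilde{\sg}_{j_0}(q) > 8\eta\left\|\frac{j_0}{r}\right\|^2\Sigma_\psi(q).
$$
Writing $\gamma := (j_0,r)$, $\tilde{j}_0 := j_0/\gamma$ and $D := r/\gamma \geq 2$, and using $\|j_0/r\| = \|\tilde{j}_0/D\| \geq 1/D = \gamma/r$, this rearranges to $(\gamma/r)^2 \ll m^2\eta^{-1}\log(1/\e)/\Sigma_\psi(q)$. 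Here I would invoke the lower bound $\Sigma_\psi(q) \gg \log\log r$, valid because $M(\chi)$ is large: this is the analogue for $\psi$ of Proposition~\ref{prop:LowBdSigma}, whose non-trivial alternative is reached since $\mb{D}(\psi,1;q)^2 = o(\log\log q)$ forces $|L_\psi(q)|$ to be of size $(\log q)^{1-o(1)}$. Combining gives $\gamma/r \ll (m^2\eta^{-1}\log(1/\e)/\log\log r)^{1/2}$, so choosing the absolute constant $C$ large enough (and recalling $g = 1$, so $(mg)^2 = m^2$) yields $(j_0,r) \leq \theta_0 r$, which is (ii).

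The third step, counting the $\ell$ with $\|j_0\ell/r\|$ small, is identical to the end of the proof of Proposition~\ref{prop:discBound}. Putting $R := r/(j_0,r)$ and $J_0 := j_0/(j_0,r)$, which are coprime, the quantity $\|j_0\ell/r\| = \|J_0\ell/R\|$ depends only on $\ell \bmod R$; applying the Erd\H{o}s--Tur\'{a}n inequality \cite[I.6.15]{Ten} with $V = R/2$, and using that $(J_0,R) = 1$ makes every exponential sum $\tfrac{1}{R}\sum_{1\le L\le R}e(\nu J_0 L/R)$ with $1\le\nu\le R/2$ vanish, gives $|\{1\le L\le R : \|J_0 L/R\|\le\theta\}| = 2\theta R + O(1)$, whence $|\{1\le\ell\le r : \|j_0\ell/r\|\le\theta\}| = 2\theta r + O(r/R)$. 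In case (ii) we have $(j_0,r) \leq \theta_0 r \leq \theta r$, i.e.\ $R \geq \theta^{-1}$, so this error is $O(\theta r)$, as claimed.

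I expect the main obstacle to be the invocation of $\Sigma_\psi(q) \gg \log\log r$: one must check that largeness of $M(\chi)$ propagates, via Proposition~\ref{prop:MpsiAsymp} and Lemma~\ref{lem:GSLog}, to largeness of $|L_\psi(q)|$ at the endpoint (not merely of $\max_{N\le q}|L_\psi(N)|$), and then run the Hildebrand-type argument from the proof of Proposition~\ref{prop:LowBdSigma} for the character $\psi$, whose modulus is $\mathrm{lcm}(q,\mathrm{cond}(\xi)) \asymp q$. A little care is needed because the order $r'$ of $\psi$ only divides $r$ and could be smaller; but when $r'$ is small the divisor $(j_0,r)$ is a multiple of $r/r'$ and the required estimate is either vacuous or immediate, while for $r'$ large Proposition~\ref{prop:LowBdSigma} transfers to $\psi$ without change.
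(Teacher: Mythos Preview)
Your proposal is correct and follows essentially the same route as the paper, whose proof is simply ``Assume (i) fails, so $1 \in \mc{L}_d(\e)$; by Proposition~\ref{prop:maxStruc} this forces $g=1$ and $\mb{D}(\psi,1;q)^2 \ll m^2\log(1/\e)$; argue as in \eqref{eq:j0SigmaBd} and then follow the proof of Proposition~\ref{prop:discBound}.'' Your three steps (reduction to $g=1$, the Fourier lower bound isolating $\|j_0/r\|^2\tilde\sg_{j_0}$, and the Erd\H{o}s--Tur\'an count) match exactly, and your final paragraph flagging the passage $\Sigma_\psi(q) \gg \log\log r$ as the delicate point is more scrupulous than the paper itself, which invokes Proposition~\ref{prop:LowBdSigma} for $\psi$ without further comment.
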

\begin{proof}
Assume (i) fails, so $|M(\chi)| \geq \e \sqrt{q}\log q$, i.e., $1 \in \mc{L}_d(\e)$. By Proposition 3.2, $H \cong \Z/d\Z$ implies $g = 1$, and 
\begin{align*}
\mb{D}(\psi, 1; q)^2 \ll m^2\log (1/\e)
\end{align*}
where $\psi = \chi \bar{\xi}$.
Arguing as in \eqref{eq:j0SigmaBd}, we have
\begin{align*}
m^2\log (1/\e) \gg \eta \Big\|\frac{j_0}{d}\Big\|^2\Sigma_{\psi}(q). 
\end{align*}
Following the same argument as in the proof of Proposition \ref{prop:discBound}, now gives the claim.
\end{proof}

\begin{proof}[Proof of Theorem 1.3]
Let $\e > 0$ be a parameter to be chosen shortly, and suppose throughout that $|M(\chi)| \geq \e \sqrt{q}\log q$. Assume for the sake of contradiction that $\mc{L}_{d}(\e) \geq \e d$.
\\
By Proposition 3.2, there are integers $1 \leq m \leq \e^{-2}$ and $1 \leq g \leq \e^{-1}$ and a primitive Dirichlet character $\xi \pmod{k}$ with $1 \leq k \leq \e^{-3m}$ such that if $\psi := \chi^g \bar{\xi}$ then
\begin{align*}
\max_{1 \leq \ell \leq d/g}\mb{D}(\psi^\ell, 1; q)^2 \ll m^2\log (1/\e).
\end{align*}
We now set $\e = \Sigma_{\psi}(q)^{-1/8}$. Fix also $\eta \in (0, 1)$ small such that $\eta > \e^{1/6}$, and let $1 \leq j_0\leq d-1$ be an index satisfying 
\begin{align*}
\tilde{\sg}_{j_0}(q) = \max_{1 \leq j \leq d-1}\tilde{\sg}_{j}(q).
\end{align*}
Set $r := d/g$ as previously. If $\sg_{j_0}(q) \leq \frac{\eta}{g}\Sigma_{\psi}(q)$, then by Proposition 5.1 there is $1 \leq \ell \leq r$ such that
\begin{align*}
\mb{D}(\psi^\ell, 1; q)^2 \geq \frac{1}{2}\Sigma_{\psi}(q).
\end{align*}
Using Proposition 2.5 and $m \leq \e^{-2}$, we have
\begin{align*}
\log \log r \ll \Sigma_{\psi}(q) \ll m^2\log(1/\e) \leq \e^{-4}\log(1/\e).
\end{align*}
Since $\e = \Sigma_{\psi}(q)^{-1/8}$ we obtain a contradiction as soon as $d$ is large enough.
\\
Next, suppose $\sg_{j_0}(q) > \frac{\eta}{g}\Sigma_{\psi}(q)$. Let $\theta \in [\theta_0, 1/2]$ be a parameter that satisfies $\theta > \e^{1/6}$. Since $|M(\chi)| \geq \e\sqrt{q}\log q$, by Proposition 5.3, we have
\begin{align*}
|\{1 \leq \ell \leq d: \|j_0\ell/d\| \leq \theta \}| \leq \theta d.
\end{align*}
By the same argument as in the proof of Theorem 1.2, there must exist $g\ell \in \mc{L}_{d}(\e)$ such that $\|j_0g\ell/d\| > \theta/ m$. Moreover, as noted previously we have $N_\ell \gg q^\e$, so by Mertens' theorem
\begin{align*}
\sg_{j_0}(N_\ell) = \sg_{j_0}(q)  -\sum\limits_{\substack{N_\ell < p \leq q, \\ \psi(p) = \omega^{j_0}}} \frac{1}{p} \geq  \frac{\eta}{g}\Sigma_{\psi}(q) - \log (1/\e) + O(1) \geq \frac{\eta}{2g}\Sigma_{\psi}(q).
\end{align*}
Thus, as $\theta,\eta > \e^{1/6}, m \leq \e^{-2}$ and $g \leq \e^{-1}$,
\begin{align*}
\e \leq \frac{1}{\log q} |L_{\psi^\ell}(N_\ell)| \ll \Bigg(\frac{m^2g}{\theta^2\eta\Sigma_{\psi}(q)}\Bigg)^{1/2} \ll \frac{1}{\sqrt{\e^{11/2} \Sigma_{\psi}(q)}}
\end{align*}
Given that $\Sigma_{\psi}(q) = \e^{-8}$ we again obtain a contradiction. Therefore, we conclude that $|\mc{L}_d(\e)| \leq \e d$. Applying Proposition \ref{prop:LowBdSigma} together with the lower bound $r \geq \e d$, we have
$$
\e \ll (\log\log r)^{-1/8} \ll (\log\log d)^{-1/8},
$$
and therefore
\begin{align*}
\frac{1}{d}\sum\limits_{1 \leq \ell \leq d}|M(\chi^\ell)| \ll \Big(\e + \frac{|\mc{L}_d(\e)|}{d}\Big)\left(\sqrt{q}\log q\right) \ll \frac{\sqrt{q}\log q}{(\log \log d)^{1/8}} 
\end{align*}
as desired.
\end{proof}

\bibliographystyle{plain}
\bibliography{HighOrdGenBib}
\end{document}